\documentclass[12pt]{amsart}
\usepackage{amssymb}
\usepackage{tikz}
\usetikzlibrary{arrows,matrix,positioning,automata,shapes,calc}
\usetikzlibrary{decorations.pathreplacing,calligraphy}
\usepackage{mathtools}

\usepackage{nicematrix}
\NiceMatrixOptions{nullify-dots}

\textwidth 6.5truein
\textheight 9 truein
\oddsidemargin 0truein
\evensidemargin 0truein
\topmargin 0truein

\usepackage{accents}
\newcommand{\dbtilde}[1]{\accentset{\approx}{#1}}

\let\frak\mathfrak
\let\Bbb\mathbb

\usepackage{eucal}

\usepackage{graphicx}

\def\>{\relax\ifmmode\mskip.666667\thinmuskip\relax\else\kern.111111em\fi}
\def\:{\relax\ifmmode\mskip.333333\thinmuskip\relax\else\kern.0555556em\fi}
\def\<{\relax\ifmmode\mskip-.333333\thinmuskip\relax\else\kern-.0555556em\fi}
\def\?{\relax\ifmmode\mskip-.666667\thinmuskip\relax\else\kern-.111111em\fi}
\def\vsk#1>{\vskip#1\baselineskip}
\def\vv#1>{\vadjust{\vsk#1>}\ignorespaces}
\def\vvn#1>{\vadjust{\nobreak\vsk#1>\nobreak}\ignorespaces}
 \let\alb\allowbreak

\def\plait#1{\par\hangindent2\parindent\indent\kern\parindent
	\llap{#1\enspace}\ignorespaces}

\let\Smallskip\smallskip
\def\smallskip{\par\Smallskip}
\let\Medskip\medskip
\def\medskip{\par\Medskip}
\let\Bigskip\bigskip
\def\bigskip{\par\Bigskip}

\let\Maketitle\maketitle
\def\maketitle{\Maketitle\thispagestyle{empty}\let\maketitle\empty}

\newtheorem{thm}{Theorem}[section]

\newtheorem{lem}[thm]{Lemma}
\newtheorem{prop}[thm]{Proposition}

\newtheorem{rem}[thm]{Remark}
\newtheorem*{rem*}{Remark}

\numberwithin{equation}{section}

\theoremstyle{definition}

\theoremstyle{definition}

\newtheorem*{example}{Example}

\def\beq{\begin{equation}}
\def\eeq{\end{equation}}
\def\be{\begin{equation*}}
	\def\ee{\end{equation*}}

\def\bean{\begin{eqnarray}}
\def\eean{\end{eqnarray}}
\def\bea{\begin{eqnarray*}}
	\def\eea{\end{eqnarray*}}

 \let\eps\varepsilon \let\epsilon\eps

 \let\phi\varphi

\let\geq\geqslant

\let\leq\leqslant

\def\C{\Bbb C}
\def\Z{\Bbb Z}

\def\gl{\frak{gl}}
\def\g{\frak{g}}

\def\lsym#1{#1\alb\dots\relax#1\alb} \def\lc{\lsym,}
\def\sq{\text{\scalebox{0.7}{$\square$}}}

\let\on\operatorname

\def\End{\on{End}}

\def\Hom{\on{Hom}}
\def\id{\on{id}}

\def\cdet{\on{cdet}}

\def\sign{\on{sign}}
\def\tr{\on{tr}}

\def\KZ/{{\sl KZ\/}}
\def\qKZ/{{\sl qKZ\/}}

\DeclareMathOperator*{\dashprod}{\scalebox{1.5}{$\Pi$}^{\prime}}

\makeatletter
\newcommand*{\shifttext}[2]{%
  \settowidth{\@tempdima}{#2}%
  \makebox[\@tempdima]{\hspace*{#1}#2}%
}
\makeatother

\newcommand\restr[2]{{
  \left.\kern-\nulldelimiterspace 
  #1 
  \right|_{#2} 
  }}

\makeatletter
\newcommand{\subalign}[1]{%
	\vcenter{%
		\Let@ \restore@math@cr \default@tag
		\baselineskip\fontdimen10 \scriptfont\tw@
		\advance\baselineskip\fontdimen12 \scriptfont\tw@
		\lineskip\thr@@\fontdimen8 \scriptfont\thr@@
		\lineskiplimit\lineskip
		\ialign{\hfil$\m@th\scriptstyle##$&$\m@th\scriptstyle{}##$\hfil\crcr
			#1\crcr
		}%
	}%
}
\makeatother

\title{Gaudin model and Deligne's category}

\author[B\<.\,Feigin]{B\<.\,Feigin$\:^\diamond$}
\thanks{$\kern-\parindent^\diamond$ E\:-mail: borfeigin@gmail.com}

\author[L\<.\,Rybnikov]{L\<.\,Rybnikov$\:^\circ$}
\thanks{$\kern-\parindent^\circ$ E\:-mail: leo.rybnikov@gmail.com}

\author[F\<.\,Uvarov]{F\<.\,Uvarov$\:^\star$}
\thanks{\noindent$^\star$ E\:-mail: fuvarov@hse.ru}

\begin{document}

\maketitle

\begin{center}
\vsk-.2>
{\it $^{\diamond\:\circ\:\star}\?$ HSE University, Faculty of Mathematics, \\ 
6 Usacheva str., Moscow, 119048, Russia\/}

\medskip
{\it $^\circ\?$ Harvard University, Department of Mathematics, \\ 1 Oxford Street, Cambridge, MA 02138, USA \/}

\medskip
{\it $^\diamond\?$ Hebrew University of Jerusalem, Einstein Institute of Mathematics, \\ Givat Ram. Jerusalem, 9190401, Israel  \/}
\end{center}

\begin{abstract}
    We show that the construction of the higher Gaudin Hamiltonians associated to the Lie algebra $\gl_{n}$ admits an interpolation to any complex $n$. We do this using the Deligne's category $\mathcal{D}_{t}$, which is a formal way to define the category of finite-dimensional representations of the group $GL_{n}$, when $n$ is not necessarily a natural number.
    
    We also obtain interpolations to any complex $n$ of the no-monodromy conditions on a space of differential operators of order $n$, which are considered to be a modern form of the Bethe ansatz equations. We prove that the relations in the algebra of higher Gaudin Hamiltonians for complex $n$ are generated by our interpolations of the no-monodromy conditions.  

    Our constructions allow us to define what it means for a pseudo-deifferential operator to have no monodromy. Motivated by the Bethe ansatz conjecture for the Gaudin model associated with the Lie superalgebra $\gl_{n\vert n'}$, we show that a ratio of monodromy-free differential operators is a pseudo-differential operator without monodromy.
\end{abstract}

\section{Introduction}
\subsection{} 
Symmetric tensor categories (STC's), see \cite{EGNO}, \cite{EK}, provide a natural framework for studying finite-dimensional representations $\on{Rep}_{\mathbb{K}}(G)$ of a group $G$ over an algebraically closed field $\mathbb{K}$. In this paper, we will consider only $\mathbb{K}=\C$. It is known that for any group $G$, there exists an affine group scheme $\widehat{G}$ such that the category $\on{Rep}_{\C}(G)$ is equivalent to the category $\on{Rep}^{alg}_{\C}(\widehat{G})$ of finite-dimensional (algebraic) representations of $\widehat{G}$. A natural question is whether there are examples of STC's which are not equivalent to $\on{Rep}^{alg}_{\C}(\widehat{G})$ for some affine group scheme $\widehat{G}$. It is easy to see that the answer to this question is positive: categories of finite-dimensional representations of affine \textit{super}group schemes provide new examples since dimensions of objects in such categories might take negative integer values. In fact, there are STC's with objects of arbitrary complex dimension. Let $t\in\C$. In \cite[Example 1.27]{DM}, Deligne and Milne introduced the category that will be denoted as $\mathcal{D}_{t}$ in this paper. The category $\mathcal{D}_{t}$ may be viewed as an interpolation $"\on{Rep}^{alg}_{\C}(GL_{t})"$ of the category $\on{Rep}^{alg}_{\C}(GL_{n})$ to arbitrary complex $n$. The object in $\mathcal{D}_{t}$ that plays the role of the standard representation $\C^{n}$ in $\on{Rep}^{alg}_{\C}(GL_{n})$ has dimension $t$ (see, e.g., \cite{CoWi} for details). If $t\notin \Z$, then $\mathcal{D}_{t}$ is an STC, otherwise it is not abelian (which is necessary for being an STC in terminology of \cite{EK}), but rather Karoubian category, and we have a tensor functor $G_{n}:\mathcal{D}_{n}\rightarrow \on{Rep}^{alg}_{\C}(GL_{n})$.
In \cite{D1}, Deligne also introduced similar interpolations $\mathcal{D}^{S}_{t}$, $\mathcal{D}^{O}_{t}$  and $\mathcal{D}^{Sp}_{2t}$ of the categories $\on{Rep}_{\C}(S_{n})$ $\on{Rep}^{alg}_{\C}(O_{n})$ and $\on{Rep}^{alg}_{\C}(Sp_{n})$, respectively. The categories $\mathcal{D}_{t}$, $\mathcal{D}^{S}_{t}$, $\mathcal{D}^{O}_{t}$, and $\mathcal{D}^{Sp}_{t}$ are sometimes called the Deligne's categories.

As was shown in \cite{E2}, many Lie-theoretic constructions associated with classical Lie groups $GL_{n}$, $O_{n}$, or $Sp_{2n}$ (e.g. the corresponding Lie algebra $\mathfrak{g}$, symmetric dual pairs, Harish-Chandra modules, the Yangian $Y(\mathfrak{g})$) can be defined using the language of symmetric tensor categories. Then, replacing categories $\on{Rep}^{alg}_{\C}(GL_{n})$, $\on{Rep}^{alg}_{\C}(O_{n})$, and $\on{Rep}^{alg}_{\C}(Sp_{2n})$ with their super analogs, or with $\mathcal{D}_{t}$, $\mathcal{D}^{O}_{t}$,  and $\mathcal{D}^{Sp}_{2t}$, respectively, one naturally obtains the generalization of these constructions to supergroups or more exotic generalizations, which collectively may be called "Lie theory in complex rank". In this paper, we show that the algebra of higher Hamiltonians (the Bethe algebra) of the quantum (rational) Gaudin model associated to the Lie algebra $\gl_{n}$ is one of such constructions, that is, it can be defined in categorical terms and admits an "interpolation" to any complex $n$. 

\subsection{}
The quantum Gaudin model associated to the Lie algebra $\gl_{n}$ was first introduced in \cite{G}, and it is a certain degeneration of the quantum spin chain. Fix $m$ distinct complex numbers $z_{1}\lc z_{m}$. Let $e_{ij}$, $i,j=1\lc n$ be the standard basis of the Lie algebra $\gl_{n}$, that is, $e_{ij}$ is the matrix with $1$ on the intersection of the row $i$ and the column $j$ and $0$ everywhere else. The model is given by the following set of $m$ commuting Hamiltonians:
\[H_{a}=\sum_{\substack{b=1 \\ b\neq a}}^{m}\frac{\sum_{i,j=1}^{n}e^{(a)}_{ij}e^{(b)}_{ji}}{z_{a}-z_{b}}\in U(\gl_{n})^{\otimes m},\quad a=1\lc m,\]
where  $e_{ij}^{(l)}=1\otimes\dots\otimes 1\otimes \underset{l-th}{e_{ij}}\otimes 1\otimes\dots \otimes 1$. 

Let $V_{\nu}$ denote the irreducible representation of $\gl_{n}$ of highest weight $\nu$. Fix a sequence $\bar{\nu}=(\nu^{(1)}\lc\nu^{(m)})$ of integral dominant $\gl_{n}$-weights. The Hamiltonians $H_{a}$, $a=1\lc m$ act on the tensor product $V(\bar{\nu})=V_{\nu^{(1)}}\otimes\dots\otimes V_{\nu^{(m)}}$. Let $V(\bar{\nu})^{sing}$ be the space of singular vectors with respect to the action of $\gl_{n}$ on $V(\bar{\nu})$. The Hamiltonians commute with this action, and therefore, act on $V(\bar{\nu})^{sing}$. The algebra $\mathcal{B}^{sing}_{n}(\bar{\nu})$ of higher Hamiltonians or "conserved charges" of the quantum Gaudin model, which sometimes is also called the Bethe algebra, is the centralizer of $H_{a}$, $a=1\lc m$ in the algebra $\End(V(\bar{\nu})^{sing})$ of all linear transformations of the space $V(\bar{\nu})^{sing}$. 

There is also the universal Bethe algebra $\mathcal{B}_{n}\in U(\gl_{n})^{\otimes m}$, see \cite{FFR}, \cite{Tal}, \cite{MTV6}. The algebra $\mathcal{B}_{n}$ commutes with the copy of $U(\gl_{n})$ in $U(\gl_{n})^{\otimes m}$ given by the inclusion $g\mapsto\sum_{a=1}^{m}1\otimes\dots\otimes 1\otimes \underset{a-th}{g}\otimes 1\otimes\dots \otimes 1$, and therefore, acts on $V(\bar{\nu})^{sing}$. The image of $\mathcal{B}_{n}$ in $\End(V(\bar{\nu})^{sing})$ under this action coincides with $\mathcal{B}^{sing}_{n}(\bar{\nu})$. The algebra $\mathcal{B}_{n}$ can be constructed using the center of the local completion of the universal enveloping algebra of the affine Lie algebra $\widehat{\gl}_{n}$, see \cite{FFR}. In this paper, we will use more explicit construction of $\mathcal{B}_{n}$ given in \cite{Tal} and \cite{CF}, see Section \ref{3} for details. Also, instead of $\mathcal{B}^{sing}_{n}(\bar{\nu})$, we will consider the image $\mathcal{B}_{n}(\bar{\nu})$ of $\mathcal{B}_{n}$ in $\End(V(\bar{\nu}))$ since we have an isomorphism of algebras 
$\mathcal{B}^{sing}_{n}(\bar{\nu})\cong \mathcal{B}_{n}(\bar{\nu})$.

Now, let us return to the Deligne's category $\mathcal{D}_{t}$. Indecomposable objects of $\mathcal{D}_{t}$ are labeled by pairs of partitions. For each integral dominant $\gl_{n}$-weight $\nu$, there is a pair of partitions $(\lambda, \mu)$ such that $G_{n}(V_{(\lambda,\mu)})=V_{\nu}$, where $V_{(\lambda,\mu)}$ is the indecomposable object corresponding to $(\lambda,\mu)$, and $G_{n}:\mathcal{D}_{n}\rightarrow\on{Rep}^{alg}_{\C}(GL_{n})$ is the functor mentioned above, see details in Section \ref{2}.
For a sequence $\bar{\nu}=(\nu^{(1)}\lc\nu^{(m)})$ of integral dominant $\gl_{n}$-weights, let $(\lambda^{(a)},\mu^{(a)})$, $a=1\lc m$ be such that $G_{n}(V_{(\lambda^{(a)},\mu^{(a)})})=V_{\nu^{(a)}}$. Denote $V(\bar{\lambda},\bar{\mu})=V_{(\lambda^{(1)},\mu^{(1)})}\otimes\dots\otimes V_{(\lambda^{(m)},\mu^{(m)})}$. In Section \ref{4}, we construct an algebra $\underline{\mathcal{B}}_{t}(\bar{\lambda},\bar{\mu})\in\End_{\mathcal{D}_{t}}(V(\bar{\lambda},\bar{\mu}))$ such that the functor $G_{n}$ induces a homomorphism $\underline{\mathcal{B}}_{n}(\bar{\lambda},\bar{\mu})\rightarrow\mathcal{B}_{n}(\bar{\nu})$, which becomes an isomorphism for sufficiently large $n$. 

\subsection{}
A standard problem in the theory of quantum integrable models is the problem of simultaneous diagonalization of higher Hamiltonians. One way to find common eigenvectors and eigenvalues of the elements of $\mathcal{B}_{n}(\bar{\nu})$ is to use the algebraic Bethe ansatz, see \cite{MTV6}. Each eigenvector obtained in this way is associated with a solution of a system of algebraic equations called the Bethe ansatz equations. The Bethe ansatz conjecture states that all common eigenvectors can be obtained in this way. 

In \cite{FFR}, the Bethe ansatz equations for the Gaudin model associated with a semisimple Lie algebra $\mathfrak{g}$ were interpreted as "no-monodromy" conditions on a certain space of ${}^{L}\mathfrak{g}$-opers $\on{Op}(z_{1}\lc z_{m})$, where ${}^{L}\mathfrak{g}$ is the Langlands dual of  $\mathfrak{g}$, see also \cite{FrG}. The Bethe ansatz conjecture was reformulated in \cite{FFR} as follows: there is a bijection between the set of common eigenvectors of the higher Hamiltonians and the set of monodromy-free ${}^{L}\mathfrak{g}$-opers from $\on{Op}(z_{1}\lc z_{m})$. This conjecture was proved for $\mathfrak{g}=\gl_{n}$ in \cite{MTV8}, and for any semisimple Lie algebra $\mathfrak{g}$ in \cite{Ryb}.

In the case $\mathfrak{g}=\gl_{n}$, we have ${}^{L}\mathfrak{g}=\mathfrak{g}$, and a $\gl_{n}$-oper from $\on{Op}(z_{1}\lc z_{m})$ may be viewed as a differential operator of the form
\[ D=\partial_{u}^{n}+\sum_{i=1}^{n}\sum_{j=1}^{i}\sum_{a=1}^{m}\frac{b_{ij}^{(a)}}{(u-z_{a})^{j}}\partial_{u}^{n-i}, \]
where we denote $\partial_{u}=d/du$. Then the no-monodromy conditions discussed above are polynomial relations  on the coefficients $b_{ij}^{(a)}$ saying that for each $a=1\lc m$, $i=1\lc n$, the differential operator $D$ has formal series solutions of the form 
\[ f_{a,i}(u)=(u-z_{a})^{n+\nu_{i}^{(a)}-i}+c(u-z_{a})^{n+\nu_{i}^{(a)}-i+1}+\dots,\]
and that it has regular singularity at $\infty$, see details in Section \ref{5}. Here, $\nu^{(a)}=(\nu_{1}^{(a)}\lc\nu_{n}^{(a)})$ is the $a$-th $\gl_{n}$-weight in the sequence $\bar{\nu}$ used in the construction of $\mathcal{B}_{n}(\bar{\nu})$. 

Let $p_{1}\lc p_{d}$ be the polynomials such that the no-monodromy conditions are written as $p_{k}(b_{ij}^{(a)})=0$, $k=1\lc d$. Consider the free commutative unital algebra $\mathcal{A}_{n}$ with free generators $y_{ij}^{(a)}$, $i=1\lc n$, $j=1\lc i$, $a=1\lc m$. Let $J_{n}(\bar{\nu})$ be the ideal in $\mathcal{A}_{n}$ generated by the elements $p_{k}(y_{ij}^{(a)})$, $k=1\lc d$. The Bethe ansatz conjecture was proved in \cite{Ryb} by proving a theorem, which in case of $\mathfrak{g}=\gl_{n}$ may be formulated as follows.
\begin{thm}\label{intro thm}
The algebras $B_{n}(\bar{\nu})$ and $\mathcal{A}_{n}/J_{n}(\bar{\nu})$ are isomorphic.
\end{thm}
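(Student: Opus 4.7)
\medskip

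\noindent\textbf{Proof plan.}
The plan is to construct an explicit surjective homomorphism $\phi:\mathcal{A}_{n}/J_{n}(\bar\nu)\to \mathcal{B}_{n}(\bar\nu)$ and then prove injectivity by a spectrum-theoretic dimension count. First I would define a map $\Tilde\phi:\mathcal{A}_{n}\to\mathcal{B}_{n}$ by sending each generator $y_{ij}^{(a)}$ to the coefficient of $(u-z_{a})^{-j}\partial_{u}^{n-i}$ in the universal differential operator $\mathcal{D}^{\mathrm{univ}}$ produced by the Talalaev / Chervov--Falqui description of $\mathcal{B}_{n}$. Restricted to $V(\bar\nu)$, these coefficients are by construction the generators of $\mathcal{B}_{n}(\bar\nu)$, so $\Tilde\phi$ is surjective onto $\mathcal{B}_{n}(\bar\nu)$; it remains to check that the no-monodromy polynomials $p_{k}$ lie in its kernel.

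The inclusion $J_{n}(\bar\nu)\subseteq\ker\Tilde\phi$ follows from the Feigin--Frenkel realisation of $\mathcal{B}_{n}$ as the image of the center of the completed enveloping algebra of $\Hat{\gl}_{n}$ at the critical level, used in \cite{FFR}. Under this realisation, the action of $\mathcal{B}_{n}$ on $V(\bar\nu)$ factors through the algebra of functions on the subscheme of those opers whose local solutions at each $z_{a}$ have the prescribed exponents $n+\nu_{i}^{(a)}-i$ without logarithmic terms and which are regular at $\infty$. Expressing these vanishing conditions in terms of the coefficients of $\mathcal{D}^{\mathrm{univ}}$ yields exactly the polynomials $p_{k}$, so $\Tilde\phi$ descends to a well-defined surjection $\phi:\mathcal{A}_{n}/J_{n}(\bar\nu)\twoheadrightarrow\mathcal{B}_{n}(\bar\nu)$.

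For injectivity I would first work at generic $z_{1},\dots,z_{m}$. By Mukhin--Tarasov--Varchenko \cite{MTV8} the Bethe algebra acts on $V(\bar\nu)^{sing}$ with simple joint spectrum at generic parameters, so $\mathcal{B}_{n}(\bar\nu)$ is reduced and its dimension equals the number of joint eigenlines. The Bethe ansatz theorem of \cite{MTV8} places these eigenlines in bijection with monodromy-free opers in $\on{Op}(z_{1},\dots,z_{m})$ carrying the prescribed local data, and for generic $z_{a}$ the polynomials $p_{k}$ cut these opers out transversally; hence $\dim(\mathcal{A}_{n}/J_{n}(\bar\nu))=\dim \mathcal{B}_{n}(\bar\nu)$, forcing $\phi$ to be an isomorphism generically.

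Finally, one must propagate the isomorphism from generic to all pairwise distinct $z_{a}$. I would view both algebras as fibers of coherent sheaves on the configuration space $\{(z_{1},\dots,z_{m})\mid z_{i}\ne z_{j}\}$ and invoke flatness. The Bethe-algebra family is flat because its fiberwise dimension is constantly $\dim V(\bar\nu)^{sing}$, and a generic-fiber isomorphism between two flat families of the same rank is an isomorphism on every fiber. The main obstacle, and the genuinely hard step, is establishing the matching flatness of $\mathcal{A}_{n}/J_{n}(\bar\nu)$---equivalently, ruling out the sudden appearance of nilpotents along special strata of the configuration space. This ultimately requires a properness / compactification argument for the scheme of monodromy-free opers with fixed singular data, which is the geometric heart of \cite{Ryb}.
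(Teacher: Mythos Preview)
The paper does not prove this statement itself. It is restated later as Theorem~\ref{T1} and immediately attributed: ``This theorem is one of the main results in the study of the Bethe ansatz method for the quantum Gaudin model. It is proved in \cite{Ryb}.'' The paper then uses it as a black box for the interpolation arguments in Section~\ref{6}. So there is no in-paper argument to compare your proposal against; your outline is effectively a sketch of what \cite{Ryb} does.

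As such a sketch it is broadly reasonable, and you correctly identify the hard step as passing from generic to arbitrary pairwise-distinct $z_{a}$. Two of your auxiliary claims are shakier than you present them, though. First, the statement that ``the Bethe-algebra family is flat because its fiberwise dimension is constantly $\dim V(\bar\nu)^{sing}$'' is itself one of the main theorems of \cite{Ryb} (cyclicity of $V(\bar\nu)^{sing}$ over $\mathcal{B}_{n}$ for all $z$), not an obvious input; you cannot invoke it to set up the flatness comparison without circularity. Second, ``a generic-fiber isomorphism between two flat families of the same rank is an isomorphism on every fiber'' is false as stated: you need the map to be fiberwise surjective (which you have) \emph{and} the source dimension not to jump, and the latter is exactly what is at stake. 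In \cite{Ryb} the actual logic is closer to: verify $J_{n}(\bar\nu)\subset\ker\phi$ directly, obtain an a priori upper bound $\dim\mathcal{A}_{n}/J_{n}(\bar\nu)\le\dim V(\bar\nu)^{sing}$ by a degeneration argument on the oper side, and combine with cyclicity to force equality. Your plan captures the shape but leans on two results that are themselves part of the package being proved.
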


In the second part of the paper, we formulate and prove an analog of this theorem for the algebra $\underline{\mathcal{B}}_{t}(\bar{\lambda},\bar{\mu})$ with some restrictions on the sequence of bipartitions $(\bar{\lambda},\bar{\mu})$, see formula \eqref{condition on bipartitions}. In this case, the order of the differential operator $D$ becomes an arbitrary complex number, so $D$ becomes a pseudo-differential operator, i. e., a formal series of the form 
\[ \partial_{u}^{t}+\sum_{i=1}^{\infty}\sum_{j=1}^{i}\sum_{a=1}^{m}\frac{b_{ij}^{(a)}}{(u-z_{a})^{j}}\partial_{u}^{t-i}. \]
Therefore, we consider an infinitely generated free commutative unital algebra $\mathcal{A}_{\infty}$ with free generators $y_{ij}^{(a)}$, $i\in\Z_{\geq 1}$, $j=1\lc i$, $a=1\lc m$, and construct an ideal $\underline{J}_{t}(\bar{\lambda},\bar{\mu})$ in $\mathcal{A}_{\infty}$ such that for sufficiently large $n$, the projection of $\underline{J}_{t=n}(\bar{\lambda},\bar{\mu})$ to $\mathcal{A}_{n}$ coincides with $J_{n}(\bar{\nu})$, where $\bar{\nu}$ is such that $G_{n}(V_{(\lambda^{(a)},\mu^{(a)})})=V_{\nu^{(a)}}$. Then we prove that for all but finitely many $t$, the algebras $\underline{\mathcal{B}}_{t}(\bar{\lambda},\bar{\mu})$ and $\mathcal{A}_{\infty}/\underline{J}_{t}(\bar{\lambda},\bar{\mu})$ are isomorphic (Theorem \ref{main}). The key observation is that the algebra $\mathcal{A}_{\infty}/\underline{J}_{t}(\bar{\lambda},\bar{\mu})$ is finitely generated and for sufficiently large integers $n$, the ideal $\underline{J}_{n}(\bar{\lambda},\bar{\mu})$ contains all elements $y_{ij}^{(a)}$ with $i>n$, see Lemma \ref{tool}.

At the moment, it is unclear to us how to construct the ideal $\underline{J}_{t}(\bar{\lambda},\bar{\mu})$ for any $(\bar{\lambda},\bar{\mu})$ (not necessarily satisfying condition \eqref{condition on bipartitions}). We indicate some technical challenges that appear in this case in Remarks \ref{R1} and \ref{R2}.

\subsection{}
Our study of the algebra $\underline{\mathcal{B}}_{t}(\bar{\lambda},\bar{\mu})$ is in particular motivated by the Bethe ansatz conjecture for the Bethe algebra $\mathcal{B}_{n|n'}$ associated with the super Lie algebra $\gl_{n|n'}$, see \cite{MVY}, \cite{HMVY}. This conjecture relates eigenvectors of $\mathcal{B}_{n|n'}$ with pseudo-differential operators of order $n-n'$, which are ratios of monodromy-free differential operators. These ratios are described by some polynomial relations on coefficients of pseudo-differential operators, and we expect that these relations are exactly the relations in the algebra $\mathcal{B}_{n|n'}$.

On the other hand, there is a full (but not faithful) tensor functor $G_{n|n'}:\mathcal{D}_{n-n'}\rightarrow \on{Rep}_{\C}(GL_{n|n'})$ to the category of finite-dimensional representations of the supergroup $GL_{n|n'}$ such that the functor $G_{n}$ discussed above is the special case $n'=0$ of $G_{n|n'}$. The existence of the functor $G_{n|n'}$ is a consequence of a certain universal property of the Deligne's category $\mathcal{D}_{t}$ (see \cite[Proposition 3.5.1]{CoWi}). In fact, it was shown in \cite{AHS} that for $t\in\Z$, the abelian envelope of $\mathcal{D}_{t}$ is a certain limit of the categories $\on{Rep}_{\C}(GL_{n|n'})$ as $n,n'\rightarrow \infty$, $n-n'=t$. It is easy to see that the functor $G_{n|n'}$ induces a surjective homomorphism of algebras $\phi_{n\vert n'}:\,\underline{\mathcal{B}}_{n-n'}(\bar{\lambda},\bar{\mu})\rightarrow \mathcal{B}_{n|n'}$. Therefore, the ideal $\underline{J}_{n-n'}(\bar{\lambda},\bar{\mu})$ together with the kernel of the homomorphism $\phi_{n\vert n'}$ give all relations in the algebra $\mathcal{B}_{n|n'}$. In Section \ref{7}, we prove that the relations given by the ideal $\underline{J}_{n-n'}(\bar{\lambda},\bar{\mu})$ are satisfied by the ratios of monodromy-free differential operators arising in \cite{HMVY}. It follows that for proving the Bethe ansatz conjecture for the Gaudin model in super case, it is important to describe the kernel of $\phi_{n\vert n'}$. We would like to do this in our future studies.  

\subsection{}
The paper is organized as follows. In Section \ref{2}, we give the definition of the Deligne's category $\mathcal{D}_{t}$ and discuss its properties that will be used later. In Section \ref{3}, we recall the construction of the Bethe algebra $\mathcal{B}_{n}$, and present different generating sets of this algebra. In Section \ref{4}, we introduce the Bethe algebra $\underline{\mathcal{B}}_{t}(\bar{\lambda},\bar{\mu})$ in the Deligne's category $\mathcal{D}_{t}$ as well as its generating sets analogous to the generating sets of $\mathcal{B}_{n}$ introduced in Section \ref{3}. Section \ref{5} is devoted to the "no-monodromy" conditions, that is, to the construction of the ideal $J_{n}(\bar{\nu})$ and its relation to the Bethe algebra $\mathcal{B}_{n}(\bar{\nu})$. We construct the ideal $\underline{J}_{t}(\bar{\lambda},\bar{\mu})$ and prove an analog of Theorem \ref{intro thm} for $\underline{\mathcal{B}}_{t}(\bar{\lambda},\bar{\mu})$ in Section \ref{6}. We show how our interpolations of the no-monodromy conditions is related to ratios of differential operators in Section \ref{7}.

\subsection{Acknowledgments}
This article is an output of a research project implemented as part of the Basic Research Program at the National Research University Higher School of Economics (HSE University).

\section{Preliminaries on the Deligne's category $\mathcal{D}_{t}$}\label{2}
\subsection{}
Let $f$ and $f'$ be finite (possibly empty) words in two letters: $\bullet$ and $\circ$. Following \cite{CoWi}, define an $(f,f')$-diagram to be a graph satisfying the next five properties:
\begin{enumerate}
    \item Each vertex is either $\bullet$ or $\circ$.
    \item The vertices are positioned in two rows so that in the top row, we obtain the word $f$, and in the bottom row, we obtain the word $f'$. 
    \item Each vertex is adjacent to exactly one edge.
    \item If an edge connects two vertices in the same row, then it connects a black vertex $\bullet$ and a white vertex $\circ$.
    \item If an edge connects two vertices in different rows, then it connects either two black or two white vertices. 
\end{enumerate}

\begin{example}
If $f=\bullet\quad\bullet\quad\circ\quad\bullet\quad\circ\quad\circ$ and $f'=\bullet\quad\bullet\quad\circ\quad\circ$, then 
\vspace{0.5cm}
\begin{center}
   \scalebox{0.8} {\begin{tikzpicture}[
V/.style={fill,circle,inner sep=0pt,minimum size=2mm},
V*/.style={draw, circle,inner sep=0pt,minimum size=2mm},
]

\node[V] (1) at (0,0) {};
\node[V] (2) at (1,0) {};
\node[V*] (3) at (2,0) {};
\node[V] (4) at (3,0) {};
\node[V*] (5) at (4,0) {};
\node[V*] (6) at (5,0) {};

\node[V] (1a) at (0,-1) {};
\node[V] (2a) at (1,-1) {};
\node[V*] (3a) at (2,-1) {};
\node[V*] (4a) at (3,-1) {};

\draw[-] (1) to (2a);
\draw[-] (5) to (4a);

\draw[bend right=60,-] (2) to (3);
\draw[bend right=35,-] (4) to (6);
\draw[bend left=35,-] (1a) to (3a);

 \end{tikzpicture}}
\end{center} 
is an $(f,f')$-diagram.
\end{example}

Fix $t\in\C$. Define a $\C$-linear strict monoidal category $\mathcal{D}_{t}^{0}$ as follows.
\begin{itemize}
    \item Objects of $\mathcal{D}_{t}^{0}$ are finite (possibly empty) words in letters $\bullet$ and $\circ$.
    \item If $f$ and $f'$ are two objects in $\mathcal{D}_{t}^{0}$, then $\Hom_{\mathcal{D}_{t}^{0}}(f,f')$ is a vector space with a basis consisting of all $(f,f')$-diagrams. 
    \item If $X$ is an $(f,f')$-diagram and  $Y$ is an $(f',f'')$-diagram, define the composition $Y\circ X$ as follows:
    \begin{enumerate}
        \item stack $X$ atop $Y$ so that the bottom row of $X$ is identified with the top row of $Y$,
        \item erase any loop that appeared in step (1),
        \item multiply the obtained $(f,f'')$-diagram by $t^{d}$, where $d$ is the number of loops erased in step (2).
    \end{enumerate}
    We extend the composition to any morphisms $X\in\Hom_{\mathcal{D}_{t}^{0}}(f,f')$ and $Y\in\Hom_{\mathcal{D}_{t}^{0}}(f',f'')$ by linearity.
    \item Tensor product is given by superposition of words or diagrams from left to right.
\end{itemize}
\begin{example}
If
\[X=
\vcenter{\hbox{
\scalebox{0.8} {\begin{tikzpicture}[
V/.style={fill,circle,inner sep=0pt,minimum size=2mm},
V*/.style={draw, circle,inner sep=0pt,minimum size=2mm},
]

\node[V] (1) at (0,0) {};
\node[V] (2) at (1,0) {};
\node[V*] (3) at (2,0) {};

\node[V] (1a) at (0,-1) {};
\node[V*] (2a) at (1,-1) {};
\node[V] (3a) at (2,-1) {};

\draw[-] (1) to (3a);

\draw[bend right=60,-] (2) to (3);
\draw[bend left=60,-] (1a) to (2a);

 \end{tikzpicture}}}}
 \quad\text{and}\quad 
 Y=
\vcenter{\hbox{
\scalebox{0.8} {\begin{tikzpicture}[
V/.style={fill,circle,inner sep=0pt,minimum size=2mm},
V*/.style={draw, circle,inner sep=0pt,minimum size=2mm},
]

\node[V] (1) at (0,0) {};
\node[V*] (2) at (1,0) {};
\node[V] (3) at (2,0) {};

\node[V] (1a) at (0,-1) {};
\node[V*] (2a) at (1,-1) {};
\node[V] (3a) at (2,-1) {};

\draw[-] (3) to (3a);

\draw[bend right=60,-] (1) to (2);
\draw[bend left=60,-] (1a) to (2a);

 \end{tikzpicture}}}}\,\, ,
 \]
 then
 \[Y\circ X = \vcenter{\hbox{
\scalebox{0.8} {\begin{tikzpicture}[
V/.style={fill,circle,inner sep=0pt,minimum size=2mm},
V*/.style={draw, circle,inner sep=0pt,minimum size=2mm},
]

\node[V] (1) at (0,0) {};
\node[V] (2) at (1,0) {};
\node[V*] (3) at (2,0) {};

\node[V] (1a) at (0,-1) {};
\node[V*] (2a) at (1,-1) {};
\node[V] (3a) at (2,-1) {};

\node[V] (1b) at (0,-2) {};
\node[V*] (2b) at (1,-2) {};
\node[V] (3b) at (2,-2) {};

\draw[-] (1) to (3a);
\draw[-] (3a) to (3b);

\draw[bend right=60,-] (2) to (3);
\draw[bend left=60,-] (1a) to (2a);
\draw[bend right=60,-] (1a) to (2a);
\draw[bend left=60,-] (1b) to (2b);

 \end{tikzpicture}}}}\quad =\quad 
 t\cdot
 \vcenter{\hbox{
\scalebox{0.8} {\begin{tikzpicture}[
V/.style={fill,circle,inner sep=0pt,minimum size=2mm},
V*/.style={draw, circle,inner sep=0pt,minimum size=2mm},
]

\node[V] (1) at (0,0) {};
\node[V] (2) at (1,0) {};
\node[V*] (3) at (2,0) {};

\node[V] (1a) at (0,-1) {};
\node[V*] (2a) at (1,-1) {};
\node[V] (3a) at (2,-1) {};

\draw[-] (1) to (3a);

\draw[bend right=60,-] (2) to (3);
\draw[bend left=60,-] (1a) to (2a);

 \end{tikzpicture}}}}\,\, .
 \]
\end{example}

We will also write $V$ (resp., $V^{*}$) for the object $\bullet$ (resp., $\circ$) of $\mathcal{D}_{t}^{0}$. Notice that as a monoidal category, $\mathcal{D}_{t}^{0}$ is generated by $V$ and $V^{*}$. Denote $T^{r,s}\coloneqq V^{\otimes r}\otimes(V^{*})^{\otimes{s}}$ and $\mathfrak{g} \coloneqq T^{1,1} = V\otimes V^{*}$.
\subsection{}
Consider the additive envelope $(\mathcal{D}_{t}^{0})^{add}$ of the category $\mathcal{D}_{t}^{0}$. By definition, $(\mathcal{D}_{t}^{0})^{add}$ is a category with objects $X=(X_{j})$ given by finite-length tuples of objects from $\mathcal{D}_{t}^{0}$ and morphisms
\[ \phi: (X_{j})\rightarrow (Y_{i}),\quad \phi=(\phi_{ij}),\quad \phi_{ij}\in\Hom_{\mathcal{D}_{t}^{0}} (X_{j}, Y_{i})\]
given by matrices of morphisms from $\mathcal{D}_{t}^{0}$ and composed via the matrix multiplication:
\[(\phi\circ\psi)_{ij}=\sum_{k}\phi_{ik}\circ\psi_{kj}.\]

The category $(\mathcal{D}_{t}^{0})^{add}$ is additive with biproducts given by concatenation of tuples. It also naturally inherits $\C$-linear and monoidal structures from the category $\mathcal{D}_{t}^{0}$.

By definition, the Deligne's category $\mathcal{D}_{t}$ is the Karoubi envelope of $\mathcal{D}_{t}$. That is, objects of $\mathcal{D}_{t}$ are pairs $(X, e)$, where $X$ is an object from $\mathcal{D}_{t}^{0}$, $e$ is an idempotent in $\End_{(\mathcal{D}_{t}^{0})^{add}}(X)$, and 
\[\Hom ((X,e),(Y,f))=\{\phi\in\Hom_{(\mathcal{D}_{t}^{0})^{add}}(X,Y)\vert f\circ\phi= \phi = \phi\circ e\}.\]

The composition of morphisms in $\mathcal{D}_{t}$ is inherited from $(\mathcal{D}_{t}^{0})^{add}$. Also, the category $\mathcal{D}_{t}$ naturally inherits monoidal and additive structures from $(\mathcal{D}_{t}^{0})^{add}$.

\subsection{}\label{prelim Deligne 3}
A partition $\lambda=(\lambda_{1},\lambda_{2},\dots)$ is an infinite non-increasing sequence of non-negative integers stabilizing at zero. Denote $|\lambda|=\sum_{i=1}^{\infty}\lambda_{i}$. We will also write $\emptyset$ for the partition $(0,0,\dots)$. A bipartition is a pair of partitions. 

It was proved in \cite{CoWi} that any indecomposable object of the Deligne's category $\mathcal{D}_{t}$ is isomorphic to $V_{(\lambda,\mu)}=(T^{r,s}, e_{(\lambda,\mu)})$ for some bipartition $(\lambda,\mu)$, where  $r=|\lambda |$, $s=|\mu |$, and $e_{(\lambda,\mu)}$ is a certain idempotent in $\End _{\mathcal{D}_{t}^{0}}(T^{r,s})$, see \cite[Section 4.3]{CoWi} for details. 

Notice that for any $m\in\Z_{> 0}$, the algebras $\End_{\mathcal{D}_{t}}(V^{\otimes m})$ and $\End_{\mathcal{D}_{t}}((V^{*})^{\otimes m})$ are isomorphic to the group algebra $\C [S_{m}]$ of the permutation group $S_{m}$. Under this isomorphism, the idempotents $e_{(\lambda,\emptyset)}$ and $e_{(\emptyset,\lambda)}$ with $|\lambda |=m$ correspond to a scalar multiple of the Young symmetrizer $c_{\lambda}$ (see, e.g., \cite[Lecture 4]{FH} for the definition of $c_{\lambda}$). In particular, $e_{(\lambda,\emptyset)}$ and $e_{(\emptyset,\lambda)}$ do not depend on the complex number $t$. 

For each $n\in\Z_{>0}$, let $V_{\sq}$ be the tautological representation of the Lie algebra $\gl_{n}$. Recall that a rational representation of $\gl_{n}$ is a subrepresentation of $V_{\sq}^{\otimes r} \otimes (V_{\sq}^{*})^{\otimes s}$ for some $r,s\in\Z_{\geq 0}$. The Deligne's category $\mathcal{D}_{t}$ is an "interpolation" of the category $\mathcal{R}_{n}$ of rational representations of $\gl_{n}$ in the sense that there exists an additive modular full essentially surjective functor $G_{n}:\mathcal{D}_{n}\rightarrow \mathcal{R}_{n}$, see \cite[Section 5]{CoWi} for details. 

For a bipartition $(\lambda, \mu)$, let $l_{\lambda}$ and $l_{\mu}$ be the numbers of nonzero elements in partitions $\lambda$ and $\mu$, respectively. If $n\geq l_{\lambda}+l_{\mu}$, then the functor $G_{n}$ sends the indecomposable object $V_{(\lambda,\mu)}$ to the irreducible $\gl_{n}$-module $V_{\nu}$ of highest weight $\nu$ defined by $\lambda=(\lambda_{1},\lambda_{2},\dots)$ and $\mu=(\mu_{1},\mu_{2},\dots)$ as follows:
\begin{equation}\label{lambda mu nu}
    \nu=(\lambda_{1},\lambda_{2}\lc\lambda_{l_{\lambda}},0,0\lc 0,-\mu_{l_{\mu}},\lc-\mu_{2}, -\mu_{1}).
\end{equation}
If $n<l_{\lambda}+l_{\mu}$, then $G_{n}(V_{(\lambda,\mu)})=\{0\}$. 

Fix $m$ bipartitions $(\lambda^{(a)},\mu^{(a)})$, $a=1\lc m$. 
Denote $\bar{V}=V_{(\lambda^{(1)},\mu^{(1)})}\otimes V_{(\lambda^{(2)},\mu^{(2)})}\otimes\dots \otimes V_{(\lambda^{(m)},\mu^{(m)})}$ and $N=\sum_{a=1}^{m}(|\lambda^{(a)} |+|\mu^{(a)} |)$. Let $G_{n}^{\bar{V}}:\End_{\mathcal{D}_{n}}(\bar{V})\rightarrow \End_{\mathcal{R}_{n}}(G_{n}(\bar{V}))$ be the surjective homomorphism induced by the functor $G_{n}$. 

\begin{lem}\label{G_n inj}
For each $n\geq N$, $G_{n}^{\bar{V}}$ is an isomorphism.
\end{lem}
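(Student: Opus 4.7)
The plan is to reduce the claim to the statement that $G_n:\End_{\mathcal{D}_n}(T^{R,S})\to\End_{\mathcal{R}_n}\bigl(V_{\sq}^{\otimes R}\otimes (V_{\sq}^*)^{\otimes S}\bigr)$ is an isomorphism whenever $n\geq R+S$, where $R=\sum_a|\lambda^{(a)}|$, $S=\sum_a|\mu^{(a)}|$, and $R+S=N$.

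First, by construction $\bar V=(W,\bar e)$, where $W=T^{r_1,s_1}\otimes\cdots\otimes T^{r_m,s_m}$ with $r_a=|\lambda^{(a)}|$, $s_a=|\mu^{(a)}|$, and $\bar e=e_{(\lambda^{(1)},\mu^{(1)})}\otimes\cdots\otimes e_{(\lambda^{(m)},\mu^{(m)})}$. Hence $\End_{\mathcal{D}_n}(\bar V)=\bar e\,\End_{\mathcal{D}_n}(W)\,\bar e$, and since $G_n$ is monoidal, $G_n(\bar e)$ is an idempotent cutting out $G_n(\bar V)$, so $\End_{\mathcal{R}_n}(G_n(\bar V))=G_n(\bar e)\,\End_{\mathcal{R}_n}(G_n(W))\,G_n(\bar e)$. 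In particular, $G_n^{\bar V}$ is the compression of $G_n^W$ by $\bar e$, so it suffices to show $G_n^W$ is injective. Using the symmetric structure on both sides (which $G_n$ respects) I can then reorder the tensor factors of $W$ so that all black vertices precede all white ones, obtaining an isomorphism $W\cong T^{R,S}$ intertwined by $G_n$ with the corresponding flip in $\mathcal{R}_n$. This reduces the question to the displayed claim about $T^{R,S}$.

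Next, by the diagram calculus the source $\End_{\mathcal{D}_n}(T^{R,S})$ has basis the $(f,f)$-diagrams with $f=\bullet^R\circ^S$; this is the walled Brauer algebra $B_{R,S}(n)$, whose dimension is $(R+S)!$ independently of $n$. On the $\mathcal{R}_n$ side, classical mixed-tensor Schur--Weyl duality for $GL_n$ identifies $\End_{GL_n}(V_{\sq}^{\otimes R}\otimes (V_{\sq}^*)^{\otimes S})$ with the image of $B_{R,S}(n)$, and shows that this image also has dimension $(R+S)!$ whenever $n\geq R+S$. Because $G_n$ is a full functor, $G_n^{T^{R,S}}$ is surjective; a dimension count then forces it to be an isomorphism.

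The principal obstacle is quoting correctly the faithfulness of the walled Brauer action on mixed tensors of $GL_n$ for $n\geq R+S$; this is classical but relies on the combinatorics of which $GL_n$-irreducibles appear in $V_{\sq}^{\otimes R}\otimes (V_{\sq}^*)^{\otimes S}$, and is the only genuine input beyond unfolding the definitions and the Karoubi/additive structure of $\mathcal{D}_n$.
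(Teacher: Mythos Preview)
Your argument is correct and follows the same overall structure as the paper's: both reduce from $\bar V$ to the ambient mixed tensor object $W=T^{r_1,s_1}\otimes\cdots\otimes T^{r_m,s_m}$ by compressing with the idempotent, then use the symmetric structure to rearrange factors. The difference is in the final step. You stop at $T^{R,S}$ and invoke mixed Schur--Weyl duality (faithfulness of the walled Brauer algebra $B_{R,S}(n)$ on $V_{\sq}^{\otimes R}\otimes(V_{\sq}^*)^{\otimes S}$ for $n\geq R+S$), which you correctly flag as the nontrivial external input. The paper instead uses the adjunction $\Hom(A,B\otimes C)\cong\Hom(A\otimes C^*,B)$ on both sides to pass from $\End(T^{R,S})$ to $\End(V^{\otimes N})$, and then appeals only to classical Schur--Weyl duality $\End_{GL_n}(V_{\sq}^{\otimes N})=\C[S_N]$ for $n\geq N$; injectivity is then immediate because distinct diagrams map to distinct permutations. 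The paper's route is marginally more self-contained since it avoids quoting the walled Brauer result, while yours is perhaps conceptually more direct in that it stays within the endomorphism algebras rather than passing to $\Hom$-spaces via duality.
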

\begin{proof}
Notice that $G_{n}^{\bar{V}}$ is a restriction of $G_{n}^{\bar{W}}$, where
\[
\bar{W}=V^{\otimes |\lambda^{(1)} |}\otimes (V^{*})^{\otimes |\mu^{(1)}|}\otimes \dots  \otimes V^{\otimes |\lambda^{(m)} |}\otimes (V^{*})^{\otimes |\mu^{(m)}|}.
\]
Therefore, it is enough to show that $G_{n}^{\bar{W}}$ is injective.

For any objects $A$, $B$, and $C$ of $\mathcal{D}_{n}$ or $\mathcal{R}_{n}$, we have $A\otimes B\cong B\otimes A$ and $\Hom (A, B\otimes C) \cong \Hom (A\otimes C^{*}, B)$. Thus, we have vector space isomorphisms $h_{1}:\, \End_{\mathcal{D}_{n}}(\bar{W})\rightarrow\End_{\mathcal{D}_{n}}(V^{\otimes N})$ and $h_{2}:\, \End_{\mathcal{R}_{n}}(G_{n}(\bar{W}))\rightarrow\End_{\mathcal{R}_{n}}(V_{\sq}^{\otimes N})=\C[S_{N}]$. The last equality follows from the Schur-Weyl duality and our assumption $n\geq N$. The map $h_{2}G_{n}^{\bar{W}}h_{1}^{-1}$ sends different diagrams to different elements of $S_{N}$, therefore, it is injective, and the lemma follows. 
\end{proof}

\section{The Bethe algebra of the Gaudin model associated with $\gl_{n}$}\label{3}
\subsection{}\label{Bethe 1}
Let $e_{ij}$, $i,j=1\lc n$ be the standard basis of the Lie algebra $\gl_{n}$, that is, $e_{ij}$ is the matrix with $1$ on the intersection of the row $i$ and the column $j$ and $0$ everywhere else. Fix distinct complex numbers $z_{1}\lc z_{m}$. For each $i,j=1\lc n$, define a $U(\gl_{n})^{\otimes m}$-valued rational function $\mathcal{L}_{ij}(u)$ of a variable $u$ by the formula:
\[\mathcal{L}_{ij}(u)=\sum_{r=1}^{m}\frac{e_{ij}^{(r)}}{u-z_{r}},\]
where  $e_{ij}^{(r)}=1\otimes\dots\otimes 1\otimes \underset{r-th}{e_{ij}}\otimes 1\otimes\dots \otimes 1$.

Let $\mathcal{L}(u)$ be an $n\times n$ matrix with entries $\mathcal{L}_{ij}(u)$. Consider a differential operator
\[
\tr \left( \partial_{u}-\mathcal{L}(u)\right)^{k}=\sum_{l=0}^{k}S_{kl}(u)\partial_{u}^{k-l}.
\]
We have $S_{k0}(u)=1$, and
\begin{equation}\label{S_kl}
S_{kl}(u)=\sum_{r=1}^{m}\sum_{j=1}^{l}\frac{S_{klj}^{(r)}}{(u-z_{r})^{j}},\quad 1\leq l\leq k
\end{equation}
for some $S_{klj}^{(r)}\in U(\gl_{n})^{\otimes m}$. It is known, see for example, \cite{CF}, that the elements $S_{klj}^{(r)}$, $k\geq 1$, $1\leq l\leq k$, $1\leq j\leq l$, $1\leq r\leq m$ commute with each other.

Let $\mathcal{B}_{n}$ be the unital commutative subalgebra of  $U(\gl_{n})^{\otimes m}$ generated by the elements $S_{klj}^{(r)}$, $k\geq 1$, $1\leq l\leq k$, $1\leq j\leq l$, $1\leq r\leq m$. The algebra $\mathcal{B}_{n}$ is known as the algebra of higher Hamiltonians (the Bethe algebra) of the quantum rational Gaudin model, see \cite{FFR}, \cite{Tal}, \cite{CF}, \cite{MTV6}.

Recall that $V_{\nu}$ denotes the finite dimensional representation of the Lie algebra $\gl_{n}$ with highest weight $\nu$. Let $\bar{\nu}=(\nu^{(1)}\lc\nu^{(m)})$ be a sequence of dominant integral $\gl_{n}$-weights. The Bethe algebra $\mathcal{B}_{n}$ acts on the tensor product $V(\bar{\nu})=V_{\nu^{(1)}}\otimes\dots \otimes V_{\nu^{(m)}}$. Denote by $\mathcal{B}_{n}(\bar{\nu})$ the projection of $\mathcal{B}_{n}$ in $\End(V(\bar{\nu}))$. In Section \ref{4} below, we will construct an analog of the algebra $\mathcal{B}_{n}(\bar{\nu})$ in the Deligne's category $\mathcal{D}_{t}$.

\subsection{}\label{Bethe 2}
In what follows, we will need another generating set for the algebra $\mathcal{B}_{n}$, which we are going to present in this subsection.

For an $n\times n$ matrix $A$ with possibly non-commuting entries $a_{ij}$, $i,j=1\lc n$, define its column determinant $\cdet A$ by the formula
\[
\cdet A = \sum_{\sigma\in S_{n}}\sign(\sigma)\,a_{\sigma (1) 1}a_{\sigma (2) 2}\dots a_{\sigma (n) n}.
\]

The matrix $A$ is called a Manin matrix if $[a_{ij},a_{pq}]=[a_{pj},a_{iq}]$ for all $i,j,p,q=1\lc n$. 
It is known, see \cite{CF}, that $\partial_{u} - \mathcal{L}(u)$ is a Manin matrix. 

We will need the following lemma:
\begin{lem}\label{Newton}
Suppose that $A$ is a Manin matrix. Denote $T_{k}=\tr A^{k}$. Let $\sigma_{1}\lc\sigma_{n}$ be the coefficients of the "characteristic polynomial":
\[ \cdet (1+\alpha A) = \sum_{k=0}^{n}\sigma_{k}\alpha^{k}.\]
Then the Newton identities 
\[ 
\sigma_{k}=(-1)^{k+1}\frac{1}{k}\sum_{i=0}^{k-1}(-1)^{i}\sigma_{i}T_{k-i}
\]
hold for every $k=1\lc n$.
\end{lem}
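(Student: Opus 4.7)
The plan is to mirror the classical commutative proof of Newton's identities, which proceeds by logarithmic differentiation of the characteristic polynomial, and to transport that argument into the Manin setting via the results of \cite{CF}. Two facts about Manin matrices from \cite{CF} are what I would invoke: (i) all coefficients $\sigma_k$ of $\cdet(1+\alpha A)$ and all power sums $T_l=\tr A^l$ pairwise commute, so that they generate a commutative subring in which formal-series manipulations are unambiguous; and (ii) the exponential generating-function identity
$$\cdet(1+\alpha A)=\exp\Bigl(\sum_{k\geq 1}\frac{(-1)^{k+1}}{k}\,T_k\alpha^k\Bigr)$$
holds as a formal power series in $\alpha$ with coefficients in that commutative subring.

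Granting (i) and (ii), the proof becomes a routine computation entirely within a commutative ring. Setting
$$F(\alpha)=\cdet(1+\alpha A)=\sum_{k=0}^{n}\sigma_k\alpha^k,\qquad G(\alpha)=\sum_{k\geq 1}\frac{(-1)^{k+1}}{k}T_k\alpha^k,$$
the relation $F=e^{G}$ gives $F'(\alpha)=F(\alpha)\,G'(\alpha)$ with $G'(\alpha)=\sum_{l\geq 1}(-1)^{l+1}T_l\alpha^{l-1}$. Comparing the coefficient of $\alpha^{k-1}$ on both sides yields
$$k\sigma_k=\sum_{i=0}^{k-1}(-1)^{k-i+1}\sigma_iT_{k-i},$$
and dividing by $k$ and using $(-1)^{-i}=(-1)^i$ produces exactly the claimed identity.

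The main obstacle is the exponential identity (ii); in fact (ii) cannot even be stated without the commutativity assertion (i), and both are nontrivial facts specific to Manin matrices. If one prefers to avoid exponentials, a direct alternative is to start from the stronger Manin-matrix version of Jacobi's formula
$$F'(\alpha)=F(\alpha)\cdot\tr\!\bigl(A(1+\alpha A)^{-1}\bigr),$$
interpreted as an identity of formal power series with $(1+\alpha A)^{-1}=\sum_{j\geq 0}(-\alpha)^jA^j$, and extract the same coefficient identity. Either way, the substantive content is the generating-function relation imported from \cite{CF}, and the extraction of the Newton recursion from it is the standard commutative calculation.
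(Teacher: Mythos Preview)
Your proposal is correct and aligns with the paper's treatment: the paper does not give an independent proof but simply refers the reader to \cite{CF}, and your sketch is a faithful reconstruction of the argument there (the exponential/MacMahon--Wronski identity for Manin matrices together with the commutativity of the $\sigma_k$ and $T_l$, followed by the standard logarithmic-differentiation extraction of the Newton recursion). There is nothing to add beyond noting that both the exponential identity and the commutativity statement are indeed proved in \cite{CF}, so your invocation of them is legitimate.
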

The proof of the lemma can be found in \cite{CF}.

Consider a differential operator
\begin{equation}\label{D(w)}
\cdet \bigl[ 1 + \alpha\bigl(\partial_{u}-\mathcal{L}(u)\bigr)\bigr]=\sum_{r=0}^{n}\sum_{s=0}^{r}\alpha^{r}\tilde{B}_{rs}(u)\partial_{u}^{r-s}.
\end{equation}
The coefficients $\tilde{B}_{rs}(u)$ are $U(\gl_{n})^{\otimes m}$-valued rational functions of $u$ with possible poles only at $z_{1}\lc z_{m}$. For all $r=0\lc n$, we have $\tilde{B}_{r0}(u)=1$ and 
\begin{equation}\label{B(u)}
\tilde{B}_{rs}(u)=\sum_{a=1}^{m}\sum_{j=1}^{s}\frac{\tilde{B}_{rsj}^{(a)}}{(u-z_{a})^{j}},\quad 1\leq s\leq r 
\end{equation}
for some $\tilde{B}_{rsj}^{(a)}\in U(\gl_{n})^{\otimes m}$.

Applying Lemma \ref{Newton} to the matrix $\partial_{u}-\mathcal{L}(u)$, we see that the elements $\tilde{B}_{rsj}^{(a)}$, $r=1\lc n$, $s=1\lc r$, $j=1\lc s$, $a=1\lc m$ generate the Bethe algebra $\mathcal{B}_{n}$.

Denote $B_{i}(u)\coloneqq\tilde{B}_{ii}(u)$ and $B^{(a)}_{ij}\coloneqq\tilde{B}_{iij}^{(a)}$.
\begin{prop}\label{B}
The elements $B_{ij}^{(a)}$, $i=1\lc n$, $j=1\lc i$, $a=1\lc m$ generate the algebra $\mathcal{B}_{n}$.
\end{prop}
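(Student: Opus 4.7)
The subalgebra generated by the $B_{ij}^{(a)}$ is contained in $\mathcal{B}_{n}$ by construction, so the plan is to establish the reverse inclusion. By the discussion at the end of Subsection \ref{Bethe 2}, it is enough to show every $\tilde B_{rsj}^{(a)}$ lies in the subalgebra generated by the $B_{ij}^{(a)}$'s. I claim the much stronger identity
\[
\tilde B_{rs}(u) \;=\; \binom{n-s}{r-s}\, B_{s}(u), \qquad 0\le s\le r\le n,
\]
as $U(\gl_n)^{\otimes m}$-valued rational functions of $u$. Matching the Laurent expansions at each $z_{a}$ then yields $\tilde B_{rsj}^{(a)}=\binom{n-s}{r-s}B_{sj}^{(a)}$, and since these binomial coefficients are nonzero, the $B_{ij}^{(a)}$'s generate the same algebra as the $\tilde B_{rsj}^{(a)}$'s.

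To prove the identity, I would introduce a central formal parameter $v$ (commuting with $\partial_u$ and with $U(\gl_n)^{\otimes m}$) and compute $\cdet\!\bigl((v+\partial_u)I-\mathcal{L}(u)\bigr)$ in two ways. First expansion: because $v$ is central, the operator $v+\partial_u$ satisfies $[v+\partial_u,f(u)]=f'(u)$, exactly like $\partial_u$. So the same Leibniz-type computation that produces \eqref{D(w)} with $\alpha=1$, applied with $\partial_u$ replaced everywhere by $\partial_u+v$, gives
\[
\cdet\!\bigl((v+\partial_u)I-\mathcal{L}(u)\bigr)
\;=\;\sum_{s=0}^{n}\tilde B_{ns}(u)\,(\partial_u+v)^{\,n-s}
\;=\;\sum_{r=0}^{n} v^{\,n-r}\sum_{s=0}^{r}\binom{n-s}{r-s}\tilde B_{ns}(u)\,\partial_u^{\,r-s},
\]
using the binomial theorem in the central variable $v$. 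Second expansion: directly expanding the column determinant by the Leibniz formula and splitting each diagonal factor $v+(\partial_u-\mathcal{L}_{kk}(u))$ into a choice of $v$ or $(\partial_u-\mathcal{L}_{kk}(u))$ according to the set $T$ of indices where the non-$v$ factor is taken, grouped by permutation fixed points, gives
\[
\cdet\!\bigl((v+\partial_u)I-\mathcal{L}(u)\bigr)
\;=\;\sum_{T\subseteq\{1,\dots,n\}} v^{\,n-|T|}\,\cdet\!\bigl((\partial_u-\mathcal{L}(u))_{T}\bigr),
\]
where $(\partial_u-\mathcal{L}(u))_{T}$ is the $|T|\times|T|$ principal submatrix. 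Applying the definition of $\tilde B_{rs}$ in \eqref{D(w)} to the sum over all size-$r$ principal minors (which is exactly the $\alpha^r$-coefficient of $\cdet(1+\alpha(\partial_u-\mathcal{L}(u)))$) identifies $\sum_{|T|=r}\cdet((\partial_u-\mathcal{L}(u))_{T})=\sum_{s=0}^{r}\tilde B_{rs}(u)\partial_u^{r-s}$.

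Equating the two expressions for the coefficient of $v^{\,n-r}$ yields $\tilde B_{rs}(u)=\binom{n-s}{r-s}\tilde B_{ns}(u)$ for all $s\le r$. Specializing to $r=s$ gives $B_{s}(u)=\tilde B_{ss}(u)=\tilde B_{ns}(u)$, and substituting back recovers the claimed identity, which finishes the proof.

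The main obstacle is the careful justification of the first expansion: we need the substitution $\partial_u\mapsto\partial_u+v$ to be legitimate after the column determinant has been normal-ordered as $\sum_s\tilde B_{ns}(u)\partial_u^{n-s}$. This works precisely because $v$ is central, so all commutators $[\partial_u+v,\mathcal{L}_{ij}(u)]$ agree with $[\partial_u,\mathcal{L}_{ij}(u)]$, and the entire normal-ordering procedure is invariant under this shift. A secondary routine point is checking the Leibniz-type identity $\cdet(vI+M)=\sum_{T} v^{\,n-|T|}\cdet(M_{T})$ for a Manin matrix $M=\partial_u-\mathcal{L}(u)$, which requires tracking the increasing ordering of non-commuting factors in the column determinant; this is standard but needs to be done explicitly.
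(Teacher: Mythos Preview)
Your proof is correct and takes essentially the same approach as the paper: your central parameter $v$ is the paper's $\alpha^{-1}$, and your ``shift $\partial_u\mapsto\partial_u+v$'' is precisely the algebra homomorphism $\Phi$ the paper introduces. The paper packages your second expansion more compactly by simply factoring $\alpha^{-1}$ out of each column to write $\cdet\bigl((\partial_u+\alpha^{-1})-\mathcal{L}(u)\bigr)=\alpha^{-n}\cdet\bigl(1+\alpha(\partial_u-\mathcal{L}(u))\bigr)$ and then reading off the image condition for $\Phi$; your principal-minor Laplace expansion is an equivalent unpacking of the same identity, and both routes arrive at $\tilde B_{rs}(u)=\binom{n-s}{r-s}\tilde B_{ss}(u)$.
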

\begin{proof}
Let $\mathcal{D}$ be the algebra of differential operators with respect to the variable $u$ with $U(\gl_{n})^{\otimes m}$-valued rational coefficients. Introduce a map $\Phi$: $\mathcal{D}\rightarrow\mathcal{D}[\alpha^{-1}]$
\[ \Phi:\,\sum_{r}a_{r}(u)\partial_{u}^{r}\mapsto\sum_{r}a_{r}(u)(\partial_{u}+\alpha^{-1})^{r}.\]

It is straightforward to check that $\Phi$ is a homomorphism of algebras. Therefore,
\begin{equation}\label{Phi cdet}
    \begin{split}
    \Phi\bigl[\cdet\bigl(\partial_{u}-\mathcal{L}(u)\bigr)\bigr] & =\cdet\bigl[\Phi\bigl(\partial_{u}-\mathcal{L}(u)\bigr)\bigr] = \\
    & = \alpha^{-n}\cdet\bigl[ 1+\alpha\bigl(\partial_{u}-\mathcal{L}(u)\bigr)\bigr].
    \end{split}
\end{equation}

Using the relation 
\[(\partial_{u}+\alpha^{-1})^{r}=\sum_{s=0}^{r}\binom{r}{s}\alpha^{-r+s}\partial_{u}^{s}, \]
one can check that $\sum_{r=0}^{N}\sum_{s=0}^{M}a_{rs}\alpha^{-N+r}\partial_{u}^{s}\in\mathcal{D}[\alpha^{-1}]$ belongs to the image of $\Phi$ if and only if $a_{rs}=0$ for $s>r$, and 
\[a_{rs}=a_{r-s,0}\binom{N-r+s}{s}.\]

Since by \eqref{Phi cdet}, the differential operator $\alpha^{-n}\cdet\bigl[ 1+\alpha\bigl(\partial_{u}-\mathcal{L}(u)\bigr)\bigr]$ belongs to the image of $\Phi$, we have 
\begin{equation}\label{B relation}
\tilde{B}_{rs}(u)=\tilde{B}_{ss}(u)\binom{n-s}{r-s}
\end{equation}
for all $r=0\lc n$, $s=0\lc r$, and the proposition follows.
\end{proof}

\begin{rem}
Notice that the generators $B_{ij}^{(a)}$ of $\mathcal{B}_{n}$ are the coefficients in the following expansion:
\begin{equation}\label{cdet expansion}
\cdet \bigl( \partial_{u} - \mathcal{L}(u)\bigr) = \sum_{i=1}^{n}\sum_{j=1}^{i}\sum_{a=1}^{m}\frac{B_{ij}^{(a)}}{(u-z_{a})^{j}}\partial_{u}^{n-i}.
\end{equation}

Indeed, we have 
\[\cdet \bigl( \partial_{u} - \mathcal{L}(u)\bigr) = \sum_{i=1}^{n}B_{ni}(u)\partial_{u}^{n-i}.\]
Taking $n=s$ in \eqref{B relation}, we get $B_{ns}(u)=B_{ss}(u)$. Since $B_{ii}(u)=\sum_{j=1}^{i}\sum_{a=1}^{m}\frac{B_{ij}^{(a)}}{(u-z_{a})^{j}}$, we obtain \eqref{cdet expansion}
\end{rem}

\section{The Bethe algebra in the Deligne's category $\mathcal{D}_{t}$}\label{4}
\subsection{}
Recall the object  $\g=V\otimes V^{*}$ of the Deligne's category $\mathfrak{D}_{t}$. 

Let $\mathcal{C}_{m}\in\Hom_{\mathcal{D}_{t}}{(\C, \g ^{\otimes m})}$ be the morphism given by the diagram

\begin{center}
\begin{tikzpicture}[
V/.style={fill,circle,inner sep=0pt,minimum size=2mm},
V*/.style={draw, circle,inner sep=0pt,minimum size=2mm},
]

\node[V] (1) at (0,0) {};
\node[V*] (2) at (1,0) {};
\node[V] (3) at (2,0) {};
\node[V*] (4) at (3,0) {};
\node[V] (5) at (4,0) {};
\node[] (dots) at (5,0) {$\dots$};
\node[V*] (6) at (6,0) {};

\draw[bend left=60,-] (2.north) to (3.north);
\draw[bend left=60,-] (4.north) to (5.north);
\draw[bend left=50,-] (1.north) to (6.north);

\end{tikzpicture}
\end{center}

The morphism $\mathcal{C}_{m}$ plays the role of the element
\begin{equation*}
C_{m}=\sum_{i_{1}, i_{2}\lc i_{m}} e_{i_{1}i_{2}}^{(1)} e_{i_{2}i_{3}}^{(2)}\dots e_{i_{m}i_{1}}^{(m)}\in \gl_{n}^{\otimes m}.
\end{equation*}

\vspace{0.5cm}

 Fix $k\in\Z_{>0}$, and a sequence $\bar{i}=(i_{1}\lc i_{k})$ such that $i_{a}\in\{1\lc m\}$, $a=1\lc k$.
We are going to construct a morphism $\mathcal{C}_{k}^{\bar{i}, m}\in\Hom_{\mathcal{D}_{t}}{(\C, \g ^{\otimes m})}$, which will play the role of the element 
\begin{equation}\label{C_{k}}
 C_{k}^{\bar{i}, m}=\sum_{j_{1}, j_{2}\lc j_{k}} e_{j_{1}j_{2}}^{(i_{1})} e_{j_{2}j_{3}}^{(i_{2})}\dots e_{j_{k}j_{1}}^{(i_{k})}\in \gl_{n}^{\otimes m}.
 \end{equation}

First, assume that all $i_{1}\lc i_{k}$ are distinct. In this case, we should have $k\leq m$. Define a permutation $\sigma\in S_{m}$ by $\sigma (a)=i_{a}$, $a=1\lc k$, and $\sigma (a)<\sigma (b)$ for all $k<a<b\leq m$. 
Since $\C[S_{m}]$ is naturally isomorphic to a subalgebra of $\End_{\mathcal{D}_{t}} (\g^{\otimes m})$, we identify $\sigma$ with the corresponding morphism in $\End_{\mathcal{D}_{t}} (\g^{\otimes m})$. Then we define $\mathcal{C}_{k}^{\bar{i},m}$ as follows
\[\mathcal{C}_{k}^{\bar{i},m}=\sigma\circ (\mathcal{C}_{k}\otimes \mathcal{C}_{1}^{\otimes(m-k)}).\]

The construction of $\mathcal{C}_{k}^{\bar{i},m}$ for any $\bar{i}$ should be clear from the following example.

Let $\bar{i}=(2,1,3,1,1)$. Then we define another sequence $\bar{j}=(4,1,5,2,3)$. The sequence $\bar{j}$ is obtained from $\bar{i}$ as follows: we consider subsequence $(1,1,1)$ of $\bar{i}$ and make it $(1,2,3)$, then we add $2$ to all other elements of $\bar{i}$. We define $\mathcal{C}_{5}^{\bar{i},3}$ as the composition of $\mathcal{C}_{5}^{\bar{j},5}$ and the morphism
\begin{center}
$\vcenter{\hbox{
\begin{tikzpicture}[
V/.style={fill,circle,inner sep=0pt,minimum size=2mm},
V*/.style={draw, circle,inner sep=0pt,minimum size=2mm},
]

\node[V] (1) at (0,0) {};
\node[V*] (2) at (1,0) {};
\node[V] (3) at (2,0) {};
\node[V*] (4) at (3,0) {};
\node[V] (5) at (4,0) {};
\node[V*] (6) at (5,0) {};

\node[V] (1a) at (0,-1) {};
\node[V*] (6a) at (5,-1) {};

\draw[bend right=60,-] (2.south) to (3.south);
\draw[bend right=60,-] (4.south) to (5.south);
\draw[-] (1.south) to (1a.north);
\draw[-] (6.south) to (6a.north);

 \end{tikzpicture}}}
\quad \otimes\quad \id _{\g^{\otimes 2}}\quad \in \Hom_{\mathcal{D}_{t}} (\g^{\otimes 5},\g^{\otimes 3})$.
\end{center}

\vspace{0.5cm}

Recall the tensor product $\bar{V}=V_{(\lambda^{(1)},\mu^{(1)})}\otimes V_{(\lambda^{(2)},\mu^{(2)})}\otimes\dots \otimes V_{(\lambda^{(m)},\mu^{(m)})}$ from Section \ref{prelim Deligne 3}.
We will now construct a morphism $\rho\in\Hom_{\mathcal{D}_{t}} (\g^{\otimes m}\otimes \bar{V},\bar{V})$, which will play the role of the action of  $U(\gl_{n})^{\otimes m}$ on a tensor product of $m$ finite dimensional irreducible $U(\gl_{n})$-modules.

Consider a tensor product $\bar{W}=W_{1}\otimes\dots\otimes W_{m}$, where $W_{a}=T^{r_{a},s_{a}}$, $r_{a}=|\lambda^{(a)}|$, and $s_{a}=|\mu^{(a)}|$, $a=1\lc m$. We will first construct a morphism $\widetilde{\rho}\in\Hom_{\mathcal{D}_{t}} (\g^{\otimes m}\otimes \bar{W},\bar{W})$.

Notice that $\bar{W}=A_{1}\otimes \dots\otimes A_{d}$, where $d=\sum_{a=1}^{m}(r_{a}+s_{a})$, and for all $i=1\lc d$, $A_{i}$ is either $V$ or $V^{*}$.
By pairing of the $i$-th tensor factor in $\g^{\otimes m}$ with the tensor factor $A_{j}$ in $\bar{W}$, we will mean the following two edges in diagrams describing $\widetilde{\rho}$:

\[
\begin{tikzpicture}[baseline=-30pt,
V/.style={fill,circle,inner sep=0pt,minimum size=2mm},
V*/.style={draw, circle,inner sep=0pt,minimum size=2mm},
]

\node[V] (1) at (0,0) {};
\node[V*] (2) at (1,0) {};
\node[] (2dots3) at (1.5,0) {$\dots$};
\node[V] (3) at (2,0) {};
\node[V*] (4) at (3,0) {};
\node[] (4dots5) at (3.5,0) {$\dots$};
\node[V] (5) at (4,0) {};
\node[V*] (6) at (5,0) {};
\node[V] (7) at (6.5,0) {};
\node[] (7dots8) at (7,0) {$\dots$};
\node[V, label=above:{$A_{j}$}] (8) at (7.5,0) {};
\node[] (8dots9) at (8,0) {$\dots$};
\node[V*] (9) at (8.5,0) {};

\node[V] (7a) at (6.5,-2) {};
\node[] (7adots8a) at (7,-2) {$\dots$};
\node[V, label=below:{$A_{j}$}] (8a) at (7.5,-2) {};
\node[] (8adots9a) at (8,-2) {$\dots$};
\node[V*] (9a) at (8.5,-2) {};

\draw[bend right=30,-] (4) to (8);
\draw[-] (3) to (8a);

\draw [decorate,decoration={brace,amplitude=5pt}]
(2,0.2) -- (3,0.2)node [black,midway, yshift=10pt] {\footnotesize
$i$-th};
\draw [decorate,decoration={brace,amplitude=13pt}]
(0,0.6) -- (5,0.6)node [black,midway, xshift=7pt, yshift=23pt] {
$\g^{\otimes m}$};
\draw [decorate,decoration={brace,amplitude=13pt}]
(6.5,0.6) -- (8.5,0.6)node [black,midway, yshift=23pt] {
$\bar{W}$};
\end{tikzpicture}
\quad\quad\text{if}\, A_{j}=V,
\]
and
\[
\begin{tikzpicture}[baseline=-30pt,
V/.style={fill,circle,inner sep=0pt,minimum size=2mm},
V*/.style={draw, circle,inner sep=0pt,minimum size=2mm},
]

\node[V] (1) at (0,0) {};
\node[V*] (2) at (1,0) {};
\node[] (2dots3) at (1.5,0) {$\dots$};
\node[V] (3) at (2,0) {};
\node[V*] (4) at (3,0) {};
\node[] (4dots5) at (3.5,0) {$\dots$};
\node[V] (5) at (4,0) {};
\node[V*] (6) at (5,0) {};
\node[V] (7) at (6.5,0) {};
\node[] (7dots8) at (7,0) {$\dots$};
\node[V*, label=above:{$A_{j}$}] (8) at (7.5,0) {};
\node[] (8dots9) at (8,0) {$\dots$};
\node[V*] (9) at (8.5,0) {};

\node[V] (7a) at (6.5,-2) {};
\node[] (7adots8a) at (7,-2) {$\dots$};
\node[V*, label=below:{$A_{j}$}] (8a) at (7.5,-2) {};
\node[] (8adots9a) at (8,-2) {$\dots$};
\node[V*] (9a) at (8.5,-2) {};

\draw[bend right=30,-] (3) to (8);
\draw[-] (4) to (8a);

\draw [decorate,decoration={brace,amplitude=5pt}]
(2,0.2) -- (3,0.2)node [black,midway, yshift=10pt] {\footnotesize
$i$-th};
\end{tikzpicture}
\quad\quad\text{if}\, A_{j}=V^{*},
\]

Then we define $\widetilde{\rho}$ as the sum of all diagrams such that:
\begin{itemize}
    \item the first tensor factor in $\g^{\otimes m}$ is paired with one of $A_{1}\lc A_{r_{1}+s_{1}}$ (the factors in $W_{1}=T^{r_{1},s_{1}}$),
    \item the second tensor factor in $\g^{\otimes m}$ is paired with one of $A_{r_{1}+s_{1}+1}\lc A_{r_{2}+s_{2}}$ (the factors in $W_{2}=T^{r_{2},s_{2}}$), and so on;
    \item for all $A_{i}$ in $\bar{W}$, which are not paired with any tensor factors in $\g^{\otimes m}$, we just connect the corresponding vertex in the top row to the $i$-th vertex in the bottom row ("$A_{i}$ goes to itself").
\end{itemize}

We define 
\[\rho\coloneqq(e_{(\lambda^{(1)},\mu^{(1)})}\otimes\dots\otimes e_{(\lambda^{(m)},\mu^{(m)})})\circ\widetilde{\rho}\circ (\id_{\mathfrak{g}^{\otimes m}}\otimes e_{(\lambda^{(1)},\mu^{(1)})}\otimes\dots\otimes e_{(\lambda^{(m)},\mu^{(m)})}).\]

Let $e$ be a morphism in $\Hom_{\mathcal{D}_{t}} (\C, \g^{\otimes m})$. Then $e\otimes \id_{\bar{V}}\in\Hom_{\mathcal{D}_{t}} (\C\otimes\bar{V}, \g^{\otimes m}\otimes\bar{V})=\Hom_{\mathcal{D}_{t}} (\bar{V}, \g^{\otimes m}\otimes\bar{V})$.
Denote $e(\bar{V})\coloneqq \rho\circ (e\otimes\id_{\bar{V}})\in\End_{\mathcal{D}_{t}}(\bar{V})$.

Recall the generators $S_{klj}^{(r)}$ of the Bethe algebra $\mathcal{B}_{n}$ from Section \ref{Bethe 1}. They depend on the sequence $\bar{z}=(z_{1}\lc z_{m})$ of distinct complex numbers. It is easy to see that for each $k$, $l$, $j$, and $r$, we have 
\[ S_{klj}^{(r)}=\sum_{l'<l}\sum_{\bar{i}} a^{\bar{i}}_{l'}(k,l,j,r,\bar{z}) C^{\bar{i}, m}_{l'},\]
where $a^{\bar{i}}_{l'}(k,l,j,r,\bar{z})$ are some complex numbers, which do not depend on $n$. 

We define 
\[ \mathcal{S}_{klj}^{(r)}=\sum_{l'<l}\sum_{\bar{i}} a^{\bar{i}}_{l'}(k,l,j,r,\bar{z}) \mathcal{C}^{\bar{i}, m}_{l'}(\bar{V}).\]
Let us write $\bar{\lambda}$ (resp., $\bar{\mu}$) for the sequece of partitions $(\lambda_{1}\lc\lambda_{m})$ (resp., $(\mu_{1}\lc\mu_{m})$). Define the Bethe algebra $\underline{\mathcal{B}}_{t}(\bar{\lambda},\bar{\mu})$ in the Deligne's category to be the unital subalgebra of $\End_{\mathcal{D}_{t}}(\bar{V})$ generated by $\mathcal{S}_{klj}^{(r)}$, $k\geq 1$, $1\leq l\leq k$, $1\leq j\leq l$, $1\leq r\leq m$.

Fix $n\in\Z_{>0}$, $n\geq l_{\lambda}^{(a)}+l_{\mu}^{(a)}$, $a=1\lc m$, where $l_{\lambda}^{(a)}$ (resp., $l_{\mu}^{(a)}$) is the number of nonzero elements  in the partition $\lambda^{(a)}$ (resp., $\mu^{(a)}$). For each $a=1\lc m$, let $\nu^{(a)}$ be a $\gl_{n}$-weight defined by $\lambda^{(a)}$ and $\mu^{(a)}$ according to formula \eqref{lambda mu nu}. Denote $\bar{\nu}=(\nu^{(1)}\lc\nu^{(m)})$.
Recall the homomorphism $G_{n}^{\bar{V}}:\End_{\mathcal{D}_{n}}(\bar{V})\rightarrow \End_{\mathcal{R}_{n}}(V(\bar{\nu}))$. The algebra $\underline{\mathcal{B}}_{t}(\bar{\lambda},\bar{\mu})$ is constructed in such a way that $G_{n}^{\bar{V}}(\underline{\mathcal{B}}_{t}(\bar{\lambda},\bar{\mu}))=\mathcal{B}_{n}(\bar{\nu})$.

Recall that $N=\sum_{a=1}^{m}(|\lambda^{(a)}|+|\mu^{(a)}|)$. The following lemma is an immediate consequence of Lemma \ref{G_n inj}.

\begin{lem}\label{B is B}
For each $n\in\Z_{\geq N}$, we have
\[\underline{\mathcal{B}}_{\,n}(\bar{\lambda}, \bar{\mu})\cong \mathcal{B}_{n}(\bar{\nu}).\]
\end{lem}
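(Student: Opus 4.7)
The statement is essentially a direct consequence of Lemma \ref{G_n inj} combined with the way $\underline{\mathcal{B}}_t(\bar\lambda,\bar\mu)$ was constructed, so my plan is to assemble these two pieces rather than doing any new computation.

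First, I would recall the setup: $\underline{\mathcal{B}}_n(\bar\lambda,\bar\mu)$ is by definition a unital subalgebra of $\End_{\mathcal{D}_n}(\bar V)$, generated by the elements $\mathcal{S}_{klj}^{(r)}$. On the other side, $\mathcal{B}_n(\bar\nu)$ is a subalgebra of $\End_{\mathcal{R}_n}(V(\bar\nu)) = \End_{\mathcal{R}_n}(G_n(\bar V))$, generated by the images $S_{klj}^{(r)}|_{V(\bar\nu)}$. The functor $G_n$ induces the algebra homomorphism $G_n^{\bar V}:\End_{\mathcal{D}_n}(\bar V)\to\End_{\mathcal{R}_n}(G_n(\bar V))$.

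Next, I would verify that the construction of $\underline{\mathcal{B}}_t(\bar\lambda,\bar\mu)$ is compatible with $G_n$, i.e., $G_n^{\bar V}(\mathcal{S}_{klj}^{(r)}) = S_{klj}^{(r)}|_{V(\bar\nu)}$. This is essentially already noted in the text after the construction, and follows by tracking how the diagrammatic morphisms $\mathcal{C}_{l'}^{\bar i,m}$ are designed to mimic the cyclic contractions $C_{l'}^{\bar i,m}$ of matrix units under $G_n$, combined with the fact that $\widetilde\rho$ is tailored to become the natural action of $U(\gl_n)^{\otimes m}$ on $V_{\sq}^{\otimes r_a}\otimes(V_{\sq}^*)^{\otimes s_a}$ and that $e_{(\lambda^{(a)},\mu^{(a)})}$ projects onto $V_{\nu^{(a)}}$ for $n\geq l_{\lambda^{(a)}}+l_{\mu^{(a)}}$. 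Granting this (which is how the coefficients $a^{\bar i}_{l'}(k,l,j,r,\bar z)$ were chosen), the restriction of $G_n^{\bar V}$ to $\underline{\mathcal{B}}_n(\bar\lambda,\bar\mu)$ lands in $\mathcal{B}_n(\bar\nu)$ and is surjective onto it, since the images of the generators $\mathcal{S}_{klj}^{(r)}$ generate $\mathcal{B}_n(\bar\nu)$.

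Finally, I would invoke Lemma \ref{G_n inj}: for $n\geq N$, the map $G_n^{\bar V}:\End_{\mathcal{D}_n}(\bar V)\to\End_{\mathcal{R}_n}(V(\bar\nu))$ is an isomorphism, in particular injective. Hence its restriction to the subalgebra $\underline{\mathcal{B}}_n(\bar\lambda,\bar\mu)$ is injective, and combined with the surjectivity onto $\mathcal{B}_n(\bar\nu)$ established in the previous step, this gives the desired algebra isomorphism $\underline{\mathcal{B}}_n(\bar\lambda,\bar\mu)\cong \mathcal{B}_n(\bar\nu)$. There is no real obstacle here: the only non-trivial input is Lemma \ref{G_n inj}, which has already been proved, and the careful (but routine) bookkeeping that the diagrammatic elements $\mathcal{S}_{klj}^{(r)}$ map to $S_{klj}^{(r)}$ under $G_n^{\bar V}$ by design.
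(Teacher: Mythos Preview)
Your proposal is correct and matches the paper's approach exactly: the paper simply states that the lemma is an immediate consequence of Lemma~\ref{G_n inj}, and your argument spells out precisely why, using that $G_n^{\bar V}$ sends the generators $\mathcal{S}_{klj}^{(r)}$ to $S_{klj}^{(r)}|_{V(\bar\nu)}$ by construction (giving surjectivity onto $\mathcal{B}_n(\bar\nu)$) and is injective for $n\geq N$ by Lemma~\ref{G_n inj}.
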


\subsection{}
Denote $I_{1}= \{a\,\vert\,\mu^{(a)}=\emptyset\}$ and $I_{2}= \{a\,\vert\,\lambda^{(a)}=\emptyset\}$. For the rest of the paper, assume that 
\begin{equation}\label{condition on bipartitions}
    \{1\lc m\}=I_{1}\sqcup I_{2}.
\end{equation} Then, as was mentioned in Section \ref{prelim Deligne 3}, the idempotents $e_{(\lambda^{(a)},\,\mu^{(a)})}$ do not depend on $t$, and it is easy to define an analog $\mathcal{B}_{[w]}(\bar{\lambda},\bar{\mu})$ of the algebra $\underline{\mathcal{B}}_{t}(\bar{\lambda},\bar{\mu})$, where the role of the complex number $t$ is played by a variable $w$. 
We construct the algebra $\mathcal{B}_{[w]}(\bar{\lambda},\bar{\mu})$ using the procedure for the construction of $\underline{\mathcal{B}}_{t}(\bar{\lambda},\bar{\mu})$ with the following changes:
\begin{itemize}
    \item Any vector space $M$ involved in the construction of $\underline{\mathcal{B}}_{t}(\bar{\lambda},\bar{\mu})$ replace by the vector space $M\otimes\C [w]$.
    \item The rule "erasing a loop = multiplication by $t$" replace by "erasing a loop = multiplication by $w$".
\end{itemize}
By construction, $\mathcal{B}_{[w]}(\bar{\lambda},\bar{\mu})$ is a finitely-generated free $C[w]$-module, and for all $t\in\C$, we have
\begin{equation}\label{B eval}
    \mathcal{B}_{[w]}(\bar{\lambda},\bar{\mu})/(w-t)\cong\underline{\mathcal{B}}_{t}(\bar{\lambda},\bar{\mu}).
\end{equation}
\begin{rem}\label{R1}
If we don't have condition \eqref{condition on bipartitions}, then the dependence of $e_{(\lambda^{(a)},\,\mu^{(a)})}$ on $t$ is not polynomial (see Example 4.3.1 in \cite{CoWi}), and there are no analogs of these idempotents in $T^{|\lambda^{(a)}|,|\mu^{(a)}|}\otimes \C [w]$.
\end{rem}
\begin{lem}\label{1.1}
Let $\pi_{n}$ denote the projection $\mathcal{B}_{[w]}(\bar{\lambda},\bar{\mu})\rightarrow \mathcal{B}_{[w]}(\bar{\lambda},\bar{\mu})/(w-t)$. Suppose that for some $p\in \mathcal{B}_{[w]}(\bar{\lambda},\bar{\mu})$ and $N_{0}\in\Z_{>0}$, we have $\pi_{n}(p)=0$, $n\in\Z_{\geq N_{0}}$. Then $p=0$.  
\end{lem}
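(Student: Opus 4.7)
The approach is to combine two facts stated just before the lemma: that $\mathcal{B}_{[w]}(\bar{\lambda},\bar{\mu})$ is a finitely-generated free $\C[w]$-module, and that its specialization at $w=t$ is canonically isomorphic to $\underline{\mathcal{B}}_{t}(\bar{\lambda},\bar{\mu})$ via \eqref{B eval}. Concretely, I would fix a $\C[w]$-basis $\{b_{1},\ldots,b_{r}\}$ of $\mathcal{B}_{[w]}(\bar{\lambda},\bar{\mu})$ and write the given element uniquely as $p=\sum_{j=1}^{r}f_{j}(w)\,b_{j}$ with $f_{j}(w)\in\C[w]$. The task then reduces to showing that each $f_{j}$ is the zero polynomial.

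Next I would invoke the standard fact that specialization of a free module preserves bases: for a free $R$-module $M$ with basis $\{b_{j}\}$ and any ideal $I\subset R$, the quotient $M/IM$ is a free $R/I$-module with basis $\{\bar{b}_{j}\}$. Applied to $R=\C[w]$, $I=(w-n)$, and the isomorphism $\mathcal{B}_{[w]}(\bar{\lambda},\bar{\mu})/(w-n)\cong\underline{\mathcal{B}}_{n}(\bar{\lambda},\bar{\mu})$ from \eqref{B eval}, this shows that $\{\pi_{n}(b_{j})\}_{j=1}^{r}$ is a $\C$-basis of $\underline{\mathcal{B}}_{n}(\bar{\lambda},\bar{\mu})$. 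Hence
\[ 0 \;=\; \pi_{n}(p) \;=\; \sum_{j=1}^{r} f_{j}(n)\,\pi_{n}(b_{j}) \]
forces $f_{j}(n)=0$ for each $j=1,\ldots,r$.

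Since this holds for every integer $n\geq N_{0}$, each polynomial $f_{j}$ has infinitely many zeros and therefore vanishes identically, which gives $p=0$. There is no real obstacle beyond unpacking the freeness statement; the genuine content is absorbed into condition \eqref{condition on bipartitions}, which guarantees that the idempotents $e_{(\lambda^{(a)},\mu^{(a)})}$ are $w$-independent and hence that $\mathcal{B}_{[w]}(\bar{\lambda},\bar{\mu})$ makes sense as a finitely-generated free $\C[w]$-module in the first place.
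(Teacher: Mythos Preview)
Your proof is correct and follows essentially the same approach as the paper: fix a $\C[w]$-basis, expand $p$ in that basis with polynomial coefficients, use the vanishing of $\pi_{n}(p)$ at all $n\ge N_{0}$ to force each coefficient polynomial to have infinitely many roots, and conclude. The only cosmetic difference is that the paper phrases the key step as ``$\pi_{n}(p)=0$ implies $p=(w-n)q$, hence $p_{i}(w)=(w-n)q_{i}(w)$ by linear independence of the basis,'' whereas you argue directly that the images of the basis elements stay linearly independent in the quotient; these are equivalent formulations of the same fact about free modules.
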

\begin{proof}
Let $f_{1}\lc f_{k}$ be a basis of $\mathcal{B}_{[w]}(\bar{\lambda},\bar{\mu})$ as a $\C [w]$-module. Then $p=p_{1}(w)f_{1}+p_{2}(w)f_{2}+\dots +p_{k}(w)f_{k}$ for some $p_{1}(w)\lc p_{k}(w)\in\C [w]$. 

Take $n\in\Z_{\geq N_{0}}$. Since $\pi_{n}(p)=0$, we have $p=(w-n)q$ for some $q=q_{1}(w)f_{1}+\dots +q_{k}(w)f_{k}$, $q_{1}(w)\lc q_{k}(w)\in\C [w]$. Since $f_{1}\lc f_{k}$ are linearly independent, we have $p_{i}(w)=(w-n)q_{i}(w)$, $i=1\lc k$, so $p_{i}(n)=0$, $i=1\lc k$. 

It follows that polynomials $p_{1}(w)\lc p_{k}(w)$ become zero if we evaluate them at any point of an infinite subset of $\C$. Therefore, $p_{i}(w)=0$, $i=1\lc k$, which means $p=0$.
\end{proof}
\begin{prop}
For all $t\in\C$, the algebra $\underline{\mathcal{B}}_{\,t}(\bar{\lambda}, \bar{\mu})$ is commutative.
\end{prop}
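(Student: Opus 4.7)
The plan is to deduce commutativity for every complex $t$ from commutativity at sufficiently large integers $n$, using the $\C[w]$-algebra $\mathcal{B}_{[w]}(\bar{\lambda},\bar{\mu})$ as a bridge and applying Lemma \ref{1.1}.

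First I would work inside $\mathcal{B}_{[w]}(\bar{\lambda},\bar{\mu})$ and consider the commutator $c = [\mathcal{S}_{klj}^{(r)}, \mathcal{S}_{k'l'j'}^{(r')}]$ of any two of its generators (understood as the $w$-version analogs, which by construction specialize to the corresponding generators of $\underline{\mathcal{B}}_{t}(\bar{\lambda},\bar{\mu})$ for every $t \in \C$). The element $c$ lies in $\mathcal{B}_{[w]}(\bar{\lambda},\bar{\mu})$, which by construction is a finitely generated free $\C[w]$-module, so Lemma \ref{1.1} applies.

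Next, for every integer $n \geq N$, I would apply the projection $\pi_{n}: \mathcal{B}_{[w]}(\bar{\lambda},\bar{\mu}) \to \mathcal{B}_{[w]}(\bar{\lambda},\bar{\mu})/(w-n) \cong \underline{\mathcal{B}}_{n}(\bar{\lambda},\bar{\mu})$ furnished by \eqref{B eval}. Since $\pi_{n}$ is a ring homomorphism, $\pi_{n}(c)$ is the commutator of the images of the two generators. By Lemma \ref{B is B} these images are identified via $G_{n}^{\bar{V}}$ with the corresponding generators $S_{klj}^{(r)}$, $S_{k'l'j'}^{(r')}$ of $\mathcal{B}_{n}(\bar{\nu})$, and these are known to commute (Section \ref{Bethe 1}). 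Hence $\pi_{n}(c) = 0$ for all $n \geq N$, and Lemma \ref{1.1} forces $c = 0$ in $\mathcal{B}_{[w]}(\bar{\lambda},\bar{\mu})$.

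It follows that $\mathcal{B}_{[w]}(\bar{\lambda},\bar{\mu})$ is commutative, since all pairs of generators commute. Finally, fixing any $t \in \C$, the isomorphism \eqref{B eval} exhibits $\underline{\mathcal{B}}_{t}(\bar{\lambda},\bar{\mu})$ as a quotient of $\mathcal{B}_{[w]}(\bar{\lambda},\bar{\mu})$, and a quotient of a commutative algebra is commutative. The only nontrivial point in this argument is the verification that the $w$-version generators really do specialize under $\pi_{n}$ to the corresponding generators of $\underline{\mathcal{B}}_{n}(\bar{\lambda},\bar{\mu})$ for every $n$, but this is immediate from the parallel construction of $\mathcal{B}_{[w]}(\bar{\lambda},\bar{\mu})$ once one uses condition \eqref{condition on bipartitions}, which guarantees that the idempotents $e_{(\lambda^{(a)},\mu^{(a)})}$ are independent of $t$ (cf.\ Remark \ref{R1}).
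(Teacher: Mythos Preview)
Your proposal is correct and follows essentially the same route as the paper: show $\mathcal{B}_{[w]}(\bar\lambda,\bar\mu)$ is commutative by checking that commutators vanish after every specialization $\pi_n$ with $n\ge N$ (via Lemma~\ref{B is B} and the known commutativity of $\mathcal{B}_n(\bar\nu)$), then apply Lemma~\ref{1.1}, and finally pass to the quotient using \eqref{B eval}. The only cosmetic difference is that you argue with commutators of generators while the paper takes arbitrary elements $a_1,a_2\in\mathcal{B}_{[w]}(\bar\lambda,\bar\mu)$; either suffices.
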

\begin{proof}
For any two elements $a_{1}$,  $a_{2}$ of the algebra $\mathcal{B}_{[w]}(\bar{\lambda}, \bar{\mu})$, consider its commutator $p=[a_{1},a_{2}]$. By Lemma \ref{B is B}, for any $n\in\Z_{\geq N}$, the algebra $\underline{\mathcal{B}}_{\,n}(\bar{\lambda}, \bar{\mu})$ is commutative. Therefore, $\pi_{n}(p)=0$, $n\in\Z_{\geq N}$. By Lemma \ref{1.1}, this implies $p=0$, so $\mathcal{B}_{[w]}(\bar{\lambda}, \bar{\mu})$ is commutative, and the proposition follows from formula  \eqref{B eval}. 
\end{proof}

\subsection{}
Recall the generators $B_{ij}^{(a)}$ of the Bethe algebra $\mathcal{B}_{n}$ introduced in Section \ref{Bethe 2}. Denote by $B_{ij}^{(a)}(\bar{\nu})$ the image of $B_{ij}^{(a)}$ under the projection $\mathcal{B}_{n}\rightarrow \mathcal{B}_{n}(\bar{\nu})$. 
Let us now define elements $\underline{B}^{(a)}_{ij}(\bar{\lambda},\bar{\mu})$ of the algebra $\underline{\mathcal{B}}_{t}(\bar{\lambda},\bar{\mu})$, which will be analogs of the generators $B_{ij}^{(a)}(\bar{\nu})$.

Lemma \ref{Newton} implies that for each $r\in\Z_{> 0}$, $s=1\lc r$, $j=1\lc s$, $a=1\lc m$, there exist polynomials $P_{rsj}^{(a)}$ in variables $x_{r's'j'}^{(a')}$, $r'=1\lc r$, $s'=1\lc r'$, $j=1\lc s'$, $a'=1\lc m$ such that $P_{rsj}^{(a)}$ do not depend on $n$, and for every $n\geq \max_{b} (l_{\lambda}^{(b)}+l_{\mu}^{(b)})$ and $r\leq n$, we have
\[ P_{rsj}^{(a)}\bigl(x_{r's'j'}^{(a')}=S_{r's'j'}^{(a')}\bigr)=\tilde{B}_{rsj}^{(a)}.\]

Define
\[\underline{\tilde{B}}_{rsj}^{(a)}(\bar{\lambda},\bar{\mu})\coloneqq P_{rsj}^{(a)}(x_{r's'j'}^{(a')}=\mathcal{S}_{r's'j'}^{(a')}),\quad  \underline{B}^{(a)}_{ij}(\bar{\lambda},\bar{\mu}) \coloneqq \underline{\tilde{B}}_{i,i,j}^{(a)}(\bar{\lambda},\bar{\mu}) \]
It is immediate that the elements $\underline{\tilde{B}}_{rsj}^{(a)}(\bar{\lambda},\bar{\mu})$ generate the algebra $\underline{\mathcal{B}}_{t}(\bar{\lambda},\bar{\mu})$. 

\begin{lem}
The elements $\underline{B}^{(a)}_{ij}(\bar{\lambda},\bar{\mu})$ generate the algebra $\underline{\mathcal{B}}_{t}(\bar{\lambda},\bar{\mu})$.
\end{lem}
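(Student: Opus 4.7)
The plan is to lift the argument of Proposition \ref{B} from $\mathcal{B}_n$ to the Deligne-category Bethe algebra by means of the parameter family $\mathcal{B}_{[w]}(\bar{\lambda},\bar{\mu})$ and Lemma \ref{1.1}. Concretely, I will establish the identity
\begin{equation*}
\underline{\tilde{B}}^{(a)}_{rsj}(\bar{\lambda},\bar{\mu}) \;=\; \binom{t-s}{r-s}\,\underline{B}^{(a)}_{sj}(\bar{\lambda},\bar{\mu})
\end{equation*}
inside $\underline{\mathcal{B}}_t(\bar{\lambda},\bar{\mu})$ for all admissible $r,s,j,a$. Since the $\underline{\tilde{B}}^{(a)}_{rsj}$ are already known to generate the algebra, this expresses every generator as a scalar multiple of some $\underline{B}^{(a)}_{sj}$ and immediately implies the lemma.

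First I would define $\underline{\tilde{B}}^{(a)}_{rsj}$ and $\underline{B}^{(a)}_{sj}$ in the $\C[w]$-algebra $\mathcal{B}_{[w]}(\bar{\lambda},\bar{\mu})$ by applying the $n$-independent polynomials $P^{(a)}_{rsj}$ (supplied by the Newton identities of Lemma \ref{Newton}) to the generators $\mathcal{S}^{(r')}_{k'l'j'}$ of $\mathcal{B}_{[w]}(\bar{\lambda},\bar{\mu})$. By the construction of the $\mathcal{S}$'s and by Lemma \ref{G_n inj}, the projection $\pi_n \colon \mathcal{B}_{[w]}(\bar{\lambda},\bar{\mu}) \to \underline{\mathcal{B}}_n(\bar{\lambda},\bar{\mu}) \cong \mathcal{B}_n(\bar{\nu})$ sends $\mathcal{S}^{(r')}_{k'l'j'}$ to $S^{(r')}_{k'l'j'}$ acting on $V(\bar{\nu})$, so it sends $\underline{\tilde{B}}^{(a)}_{rsj}$ to $\tilde{B}^{(a)}_{rsj}(\bar{\nu})$ and $\underline{B}^{(a)}_{sj}$ to $B^{(a)}_{sj}(\bar{\nu})$ as soon as $n \geq \max\{r,\ \max_b(l^{(b)}_{\lambda}+l^{(b)}_{\mu})\}$.

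Next I form the element
\begin{equation*}
p \;=\; \underline{\tilde{B}}^{(a)}_{rsj}(\bar{\lambda},\bar{\mu}) \;-\; \binom{w-s}{r-s}\,\underline{B}^{(a)}_{sj}(\bar{\lambda},\bar{\mu}) \;\in\; \mathcal{B}_{[w]}(\bar{\lambda},\bar{\mu}).
\end{equation*}
Extracting the coefficient of $(u-z_a)^{-j}$ in \eqref{B relation} gives $\tilde{B}^{(a)}_{rsj}(\bar{\nu}) = \binom{n-s}{r-s}\, B^{(a)}_{sj}(\bar{\nu})$ in $\mathcal{B}_n(\bar{\nu})$ for every sufficiently large $n$, hence $\pi_n(p) = 0$ for all such $n$. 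Lemma \ref{1.1} then forces $p = 0$ in $\mathcal{B}_{[w]}(\bar{\lambda},\bar{\mu})$, and specialization $w \mapsto t$ via the isomorphism \eqref{B eval} yields the displayed identity in $\underline{\mathcal{B}}_t(\bar{\lambda},\bar{\mu})$.

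The only substantive point is the passage from "the polynomial identity holds after every integer specialization $w = n$ with $n \gg 0$" to "the identity already holds in $\mathcal{B}_{[w]}(\bar{\lambda},\bar{\mu})$"; this is exactly what Lemma \ref{1.1} delivers, using that $\mathcal{B}_{[w]}(\bar{\lambda},\bar{\mu})$ is a free $\C[w]$-module of finite rank. Everything else is the same linear-algebraic reduction (via Lemma \ref{Newton} and the homomorphism $\Phi$) that was used in the proof of Proposition \ref{B}, now carried out uniformly in the formal parameter $w$.
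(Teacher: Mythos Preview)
Your proof is correct and follows essentially the same approach as the paper: both establish the relation $\underline{\tilde{B}}_{rsj}^{(a)}=\binom{t-s}{r-s}\underline{B}_{sj}^{(a)}$ by lifting \eqref{B relation} to $\mathcal{B}_{[w]}(\bar{\lambda},\bar{\mu})$, checking it after $\pi_n$ for all large $n$ via Lemma \ref{B is B}, and then invoking Lemma \ref{1.1}. The paper packages the coefficients into the rational functions $\underline{\tilde{B}}_{rs}(u)$ before applying the argument, while you work directly with the individual $\underline{\tilde{B}}_{rsj}^{(a)}$, but this is only a cosmetic difference.
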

\begin{proof}
For each $r\in\Z_{> 0}$, $s=1\lc r$, define a rational function $\underline{\tilde{B}}_{rs}(u)$ as follows
\begin{equation}\label{underline B(u)}
\underline{\tilde{B}}_{rs}(u)=\sum_{a=1}^{m}\sum_{j=1}^{s}\frac{\underline{\tilde{B}}_{rsj}^{(a)}}{(u-z_{a})^{j}}.
\end{equation}
By formula \eqref{B relation}, Lemma \ref{B is B}, and Lemma \ref{1.1}, we have
\begin{equation}\label{underline B relation}
\underline{\tilde{B}}_{rs}(u)=\underline{\tilde{B}}_{ss}(u)\binom{t-s}{r-s},
\end{equation}
which proves the lemma.
\end{proof}
 
\section{Fuchsian differential operators with no monodromy}\label{5}
\subsection{}
Let $\mathcal{A}$ be a commutative algebra. Consider the algebra $\mathcal{A}((x))$ of Laurent series in $x$ with coefficients in $\mathcal{A}$. Denote by $\partial_{x}$ the derivative with respect to $x$. 

Fix $n$ integers $m_{1}<\dots <m_{n}$.  Consider a differential operator
\begin{equation}\label{D}
D=\sum_{i=0}^{n}\sum_{j=-i}^{\infty}b_{ij}x^{j}\partial^{n-i}_{x}
\end{equation}
with some $b_{ij}\in\mathcal{A}$. We are going to give a necessary and sufficient condition for a differential equation $Df=0$ to have $n$ solutions $f_{1}\lc f_{n}\in\mathcal{A}((x))$ of the form 
\begin{equation}\label{solution}
f_{i}=x^{m_{i}}+a_{1}^{(i)}x^{m_{i}+1}+a_{2}^{(i)}x^{m_{i}+2}+\dots .
\end{equation}
This condition will be in the form of certain polynomial relations on the coefficients $b_{ij}$ of $D$.

For each $k=0,1,2,\dots$ and $\alpha\in\C$, define 
\[
r_{k}(\alpha)=\sum_{i=0}^{n}b_{i,-i+k}\alpha^{\underline{n-i}},
\]
where $\alpha^{\underline{j}}=\alpha(\alpha-1)(\alpha-2)\dots (\alpha-j+1)$, $j\in\Z_{\geq 0}$.

Notice that the coefficient for the lowest degree of $x$ in $Df_{i}\in\mathcal{A}((x))$ equals $r_{0}(m_{i})$. Therefore, we get a necessary condition for $D$ to have a solution of the form \eqref{solution}:
\begin{equation}\label{residue condition}
    r_{0}(m_{i})=0,\quad i=1\lc n.
\end{equation}

For each $1\leq i<j\leq n$, consider a matrix 
\begin{equation}\label{Aij}
A_{ij}=
\begin{tikzpicture}[baseline=(current bounding box.center)]
\matrix (m) [matrix of math nodes,nodes in empty cells,right delimiter={)},left delimiter={(} ]
{
r_{1}(m_{i}) & r_{2}(m_{i}) & & & r_{m_{j}-m_{i}}(m_{i})\\
r_{0}(m_{i}+1) & r_{1}(m_{i}+1) & & & \\
0 & r_{0}(m_{i}+2) & & &  \\
&  & & & r_{2}(m_{j}-2) \\
0 & & 0\,\,\, & r_{0}(m_{j}-1) & r_{1}(m_{j}-1)\\
} ;
\draw[loosely dotted, thick] ($(m-2-2)+(0.8,-0.3)$)-- ($(m-5-5)+(-1.1,0.2)$);
\draw[loosely dotted, thick] (m-3-2)-- (m-5-4);
\draw[loosely dotted, thick] (m-1-2)-- ($(m-4-5)+(-1.1,0.2)$);
\draw[loosely dotted, thick] (m-3-1)-- (m-5-1);
\draw[loosely dotted, thick] (m-5-1)-- (m-5-3);
\draw[loosely dotted, thick] (m-1-2)-- (m-1-5);
\draw[loosely dotted, thick] (m-1-5)-- (m-4-5);
\draw[loosely dotted, thick] (m-3-1)-- (m-5-3);
\end{tikzpicture}.
\end{equation}
Define a matrix $\bar{A}_{ij}$ by crossing out columns and rows of $A_{ij}$ that contain elements $r_{0}(m_{l})$, $i<l<j$. 
\begin{prop}\label{prop1}
The differential equation $Df=0$ has solutions $f_{1}\lc f_{n}$ of the form \eqref{solution} if and only if \eqref{residue condition} holds and 
\begin{equation}\label{monodromy condition}
\det \bar{A}_{ij} =0,\quad \quad 1\leq i<j\leq n.
\end{equation}
\end{prop}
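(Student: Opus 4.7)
The plan is to substitute the ansatz \eqref{solution} into $Df_i=0$, translate the vanishing of each coefficient of $x$ into a recurrence on the $a_l^{(i)}$, and identify the resulting compatibility conditions with $\det\bar A_{ij}=0$.

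First, since $D(x^\alpha)=\sum_{k\ge 0}r_k(\alpha)x^{\alpha-n+k}$, plugging in $f_i=\sum_{l\ge 0}a_l^{(i)}x^{m_i+l}$ with $a_0^{(i)}=1$ and equating the coefficient of $x^{m_i+p-n}$ to zero yields
\[
\sum_{l=0}^{p}r_{p-l}(m_i+l)\,a_l^{(i)}=0,\qquad p\ge 0.
\]
The $p=0$ case is $r_0(m_i)=0$, showing that \eqref{residue condition} is necessary. For $p\ge 1$ the equation isolates $a_p^{(i)}$ with coefficient $r_0(m_i+p)$; under \eqref{residue condition} and the standing assumption that $r_0$ has degree $n$, its only roots are $m_1,\dots,m_n$, so $r_0(m_i+p)=0$ if and only if $p$ is one of the ``resonant'' values $l_k:=m_{i+k}-m_i$ for $k=1,\dots,n-i$. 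At non-resonant $p$, the value $a_p^{(i)}$ is uniquely determined by the earlier $a_l^{(i)}$'s; at a resonant $p=l_k$, the value $a_{l_k}^{(i)}$ is free but a compatibility condition $\sum_{l=0}^{l_k-1}r_{l_k-l}(m_i+l)\,a_l^{(i)}=0$ must hold.

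Next I would normalize $f_i$ by exploiting the freedom to add $c_k f_k$ for $k>i$: each such addition preserves the form \eqref{solution} and affects only the coefficients at $x^{m_i+l}$ with $l\ge l_k$, so performing the normalizations in the order $k=i+1,i+2,\dots,n$ causes no interference and we may assume $a_{l_k}^{(i)}=0$ for every $k$ (in the ``only if'' direction this is an a-posteriori normalization of the given $f_i$'s; in the ``if'' direction we build $f_i$ with this choice). Under this normalization, for each pair $i<j$ with $P:=l_{j-i}=m_j-m_i$, the $P-j+i$ non-resonant equations $p\in\{1,\dots,P-1\}\setminus\{l_1,\dots,l_{j-i-1}\}$ uniquely determine the non-resonant $a_l^{(i)}$'s by forward substitution with nonzero pivots $r_0(m_i+l)$, and the equation at $p=P$ becomes a single overdetermined compatibility condition on these already-fixed values.

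The crux is to identify this compatibility with $\det\bar A_{ij}=0$. Writing the pertinent system as $Mx=-v$, where $v=(r_p(m_i))_p$ records the $a_0^{(i)}=1$ contribution and $M$ collects the coefficients of the non-resonant $a_l^{(i)}$'s, a direct entry-by-entry comparison identifies $[v\mid M]$ with $\bar A_{ij}^{T}$: the rows of $A_{ij}$ are naturally indexed by the variables $a_0,a_1,\dots,a_{P-1}$ and the columns by the equations $p=1,\dots,P$, and deleting rows $l_k+1$ and columns $l_k$ for $k=1,\dots,j-i-1$ retains exactly the $a_0$- and non-resonant-$a_l$-rows together with the non-resonant- and $p=P$-equation columns. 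Since $M$ has full column rank (its non-resonant rows form a lower-triangular block with all pivots nonzero), the overdetermined system is consistent if and only if $\det[v\mid M]=\pm\det\bar A_{ij}=0$. Running this argument inductively on $j-i$ (for each fixed $i$) establishes both directions of the proposition; the main technical obstacle is the combinatorial bookkeeping in this identification, since it requires tracking precisely which rows and columns of $A_{ij}$ are excised by the normalizations and by the earlier already-imposed compatibility conditions.
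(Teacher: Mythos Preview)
Your proposal is correct and follows essentially the same approach as the paper: derive the recursion $\sum_{l=0}^{p}r_{p-l}(m_i+l)a_l^{(i)}=0$, normalize so that the resonant coefficients $a^{(i)}_{m_k-m_i}$ vanish for $k>i$, and identify the compatibility at each resonant level with the vanishing of $\det\bar A_{ij}$. The only cosmetic difference is that the paper carries out the identification via an explicit Cramer-type formula for the non-resonant $a_l^{(i)}$ as ratios of principal minors of $\bar A_{ij}$ and then recognizes the overdetermined equation as the Laplace expansion of $\det\bar A_{ij}$ along its last column, whereas you phrase the same step as ``$M$ has full column rank, so the augmented system is consistent iff $\det[v\mid M]=\pm\det\bar A_{ij}=0$''; these are equivalent linear-algebra formulations of the same computation.
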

\begin{proof}
Suppose that we have solutions $f_{1}\lc f_{n}$ of the form \eqref{solution}. Then, as noted above, condition \eqref{residue condition} holds. We may assume that for coefficients $a^{(i)}_{j}$ of $f_{i}$, see formula \eqref{solution}, we have $a^{(i)}_{m_{k}-m_{i}}=0$ for all $k>i$.

Equating the coefficients for $x^{m_{i}+l-n}$ on the both sides of the relation $Df_{i}=0$, we have 
\begin{equation}\label{recursion}
\sum_{j=0}^{l-1}a^{(i)}_{j}r_{l-j}(m_{i}+j)+a^{(i)}_{l}r_{0}(m_{i}+l)=0.
\end{equation}

For each $l=1,2\lc m_{i}-m_{j}$ such that $m_{i}+l\neq m_{k}$, $k>i$, denote by $\bar{P}_{il}$ the principal minor of the matrix $\bar{A}_{ij}$ with the last row $\{0\lc 0, r_{0}(m_{i}+l), r_{1}(m_{i}+l)\}$. Using the expansion of $\bar{P}_{il}$ along the last column and induction on $l$, one checks that \eqref{recursion} implies 
\begin{equation}\label{coefficients}
a^{(i)}_{l}=(-1)^{l}\bar{P}_{il}\left(\underset{k=1}{\overset{l}{{\prod}'}}r_{0}(m_{i}+k)\right)^{-1},
\end{equation}
where $\underset{k=1}{\overset{l}{{\prod}'}}$ denotes the product over $k=1\lc l$ such that $m_{i}+k\neq m_{j}$ for all $j>i$. Notice that because of condition \eqref{residue condition}, we have $r_{0}(\alpha)=\prod_{i=1}^{n}(\alpha - m_{i})$ for any $\alpha\in\C$, in particular $\underset{k=1}{\overset{l}{{\prod}'}}r_{0}(m_{i}+k)\in\C^{\times}$, and the right hand side of formula \eqref{coefficients} is well-defined.

Now, take $l$ such that $m_{i}+l=m_{k}$ for some $k>i$ in formula \eqref{recursion}. Because of condition \eqref{residue condition}, we have $r_{0}(m_{i}+l)=0$, and formula \eqref{recursion} reads for such $l$ as
\begin{equation}\label{recursion degeneration}
\sum_{j=0}^{l-1}a^{(i)}_{j}r_{l-j}(m_{i}+j)=0.
\end{equation}

But because of \eqref{coefficients}, the left hand side of \eqref{recursion degeneration} multiplied by $\underset{j=1}{\overset{l-1}{{\prod}'}}r_{0}(m_{i}+j)$ is the expansion of $\det \bar{A}_{ik}$ along the last column. This proves the "only if" part of the proposition.

Conversely, assume that conditions \eqref{residue condition} and \eqref{monodromy condition} hold. Fix $i=1\lc n$. Then for each $l<m_{n}-m_{i}$ such that $m_{i}+l\neq m_{k}$ for all $k>i$, we define $a^{(i)}_{l}$ as in formula \eqref{coefficients}. If $m_{i}+l=m_{k}$ for some $k>i$, then we put $a^{(i)}_{l}=0$. If $j>n$, then we define $m_{j}\coloneqq m_{n}+j-n$. This allows us define $a^{(i)}_{l}$ for $l>m_{n}-m_{i}$ similarly to the case $l<m_{n}-m_{i}$. 

With $a^{(i)}_{l}$ defined in such a way, relation \eqref{recursion} hold for every $l=1,2,\dots $ (condition \eqref{monodromy condition} ensures that \eqref{recursion} is true when $m_{i}+l=m_{k}$ for some $k$). Therefore, the series $f_{i}$ defined by formula \eqref{solution} solves the differential equation $Df=0$. This proves the "if" part of the proposition.
\end{proof}

\subsection{}\label{5.2}
Let $\mathcal{A}(u)=\mathcal{A}\otimes\C (u)$ be the algebra of $\mathcal{A}$-valued rational functions of $u$. Let $ \mathcal{D}\bigl(\mathcal{A}((x))\bigr)$ and $\mathcal{D}\bigl(\mathcal{A}(u)\bigr)$ be the algebras of differential operators with coefficients in $\mathcal{A}((x))$ and $\mathcal{A}(u)$, respectively. For any $z\in\C$, define a map
\begin{equation}\label{Lz}
\begin{split}
    L_{z}:\quad & \mathcal{D}\bigl(\mathcal{A}(u)\bigr) \rightarrow  \mathcal{D}\bigl(\mathcal{A}((x))\bigr),\\
    &\sum_{i=1}^{n}b_{i}(u)\partial_{u}^{n-i}  \mapsto  \sum_{i=1}^{n}L_{z}\bigl[b_{i}(u)\bigr](x)\partial_{x}^{n-i}.
\end{split}
\end{equation}
Here, $L_{z}\bigl[b_{i}(u)\bigr](x)$ is the Laurent series of $b_{i}(u)$ at $z$, where we take $u-z=x$.

We will also need a map $L_{\infty}:\, \mathcal{D}\bigl(\mathcal{A}(u)\bigr) \rightarrow  \mathcal{D}\bigl(\mathcal{A}((x))\bigr)$ defined by 
\[
L_{\infty}:\,\sum_{i=1}^{n}b_{i}(u)\partial_{u}^{n-i}  \mapsto  \left(-\frac{1}{x^{2}}\right)^{n}\sum_{i=1}^{n}L_{0}\bigl[b_{i}(u^{-1})\bigr](x)(-x^{2}\partial_{x})^{n-i}.
\]

Now, let us take $\mathcal{A}=\mathcal{A}_{n}$, where $\mathcal{A}_{n}$ is the commutative algebra with free generators $y_{ij}^{(a)}$, $i=1\lc n$, $j=1\lc i$, $a=1\lc m$. Let $\bar{z}=(z_{1}\lc z_{m})$ be a sequence of pairwise distinct complex numbers, and let $\bar{\nu}=(\nu^{(1)},\nu^{(2)}\lc\nu^{(m)})$ be a sequence of dominant integral $\gl_{n}$-weights. Consider a differential operator $D^{\C [y]}\in\mathcal{D}\bigl(\mathcal{A}_{n}(u)\bigr)$ defined by the formula 
\begin{equation}\label{Dcy}
D^{\C [y]}=\partial_{u}^{n}+\sum_{i=1}^{n}\sum_{j=1}^{i}\sum_{a=1}^{m}\frac{y_{ij}^{(a)}}{(u-z_{a})^{j}}\partial_{u}^{n-i}.
\end{equation}

Recall the polynomials $r_{k}(\alpha)$ and matrices $\bar{A}_{ij}$ that we constructed for a differential operator $D\in\mathcal{D}\bigl(\mathcal{A}((x))\bigl)$ of the form \eqref{D}. The construction of $\bar{A}_{ij}$ depends on numbers $m_{1}<m_{2}<\dots <m_{n}$. Notice that for each $a=1\lc m$, the differential operator $L_{z_{a}}\bigl(D^{\C [y]}\bigr)$ is of the form \eqref{D}. Taking $D=L_{z_{a}}\bigl(D^{\C [y]}\bigr)$ and $m_{n+1-i}=n+\nu^{(a)}_{i}-i$, we denote the corresponding polynomials $r_{k}(\alpha)$ and matrices $\bar{A}_{ij}$ as $r^{(a)}_{k}(\alpha)$ and $\bar{A}^{(a)}_{ij}$, respectively.

The differential operator $L_{\infty}\bigl(D^{\C [y]}\bigr)$ is not of the form \eqref{D}, but there are elements $b_{ij}^{(\infty)}\in\mathcal{A}_{n}$ such that
\[L_{\infty}\bigl(D^{\C [y]}\bigr)=\sum_{i=0}^{n}\sum_{j=-2i+1}^{\infty}b_{ij}^{(\infty)}x^{j}\partial_{x}^{n-i}. \]
The difference with \eqref{D} is that the summation over $j$ starts from $-2i+1$ rather then $-i$. 

Let $J_{n}(\bar{\nu})$ be the ideal in $\mathcal{A}_{n}$ generated by the elements
\begin{equation}\label{gen1}
r_{0}^{(a)}(n+\nu^{(a)}_{i}-i),\quad a=1\lc m,\, i=1\lc n,
\end{equation}
\begin{equation}\label{gen2}
  \det \bar{A}_{ij}^{(a)},\quad a=1\lc m,  
\end{equation}
and
\begin{equation}\label{gen4}
    b_{ij}^{(\infty)}, \quad j=-2i+1,\, -2i+2\lc -i-1.
\end{equation}

Denote $\mathcal{F}_{n}=\mathcal{A}/J_{n}(\bar{\nu})$. Notice that by Proposition \ref{prop1}, the set of closed points of the scheme $\on{Spec} \mathcal{F}_{n}$ is the set of monodromy-free Fuchsian differential operators with regular singularities at $z_{1},z_{2}\lc z_{m}$ and $\infty$, with exponents at $z_{a}$ given by $n+\nu^{(a)}_{i}-i$, $i=1\lc n$. The algebra $\mathcal{F}_{n}$ is closely related to the projection $\mathcal{B}_{n}(\bar{\nu})$ of the Bethe algebra introduced above. Namely, we have the following theorem.

\begin{thm}\label{T1}
The kernel of the homomorphism
\begin{equation*}
    \begin{split}
\phi:\quad & \mathcal{A}_{n} \rightarrow \mathcal{B}_{n}(\bar{\nu}) \\
& y^{(a)}_{ij}\mapsto B_{ij}^{(a)}(\bar{\nu})
    \end{split}
\end{equation*}   
is the ideal $J_{n}(\nu)$.
\end{thm}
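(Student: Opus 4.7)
\medskip

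\noindent\textbf{Proof plan.} The plan is to establish the two inclusions separately. The forward inclusion is a consequence of the Feigin--Frenkel--Reshetikhin identification of joint eigenvalues of the Bethe algebra with monodromy-free opers, while the reverse inclusion is more delicate and reduces to a dimension count that ultimately relies on the Mukhin--Tarasov--Varchenko completeness theorem for the $\gln$-Bethe ansatz.

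\medskip

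\noindent\emph{Forward inclusion $J_{n}(\bar{\nu})\subseteq\ker\phi$.} Let $v\in V(\bar{\nu})$ be a joint eigenvector of $\mathcal{B}_{n}(\bar{\nu})$ and write $\beta_{ij}^{(a)}\in\C$ for the eigenvalue of $B_{ij}^{(a)}(\bar{\nu})$ on $v$. By \eqref{cdet expansion}, $\cdet(\partial_{u}-\mathcal{L}(u))$ acts on $v$ as the scalar differential operator
\[ D_{v}\,=\,\partial_{u}^{n}\,+\,\sum_{i,j,a}\frac{\beta_{ij}^{(a)}}{(u-z_{a})^{j}}\,\partial_{u}^{n-i}. \]
By the Feigin--Frenkel--Reshetikhin correspondence recalled in the introduction, $D_{v}$ is a monodromy-free Fuchsian operator whose exponents at $z_{a}$ are exactly $n+\nu_{i}^{(a)}-i$, $i=1,\dots,n$, and which is regular singular at $\infty$. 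By Proposition \ref{prop1} applied to each $L_{z_{a}}(D_{v})$, together with the description of the behavior at infinity via $L_{\infty}$, all of the generators \eqref{gen1}, \eqref{gen2}, \eqref{gen4} of $J_{n}(\bar{\nu})$ vanish when $y_{ij}^{(a)}$ is specialized to $\beta_{ij}^{(a)}$. For generic $\bar{z}$ the Bethe algebra acts with simple spectrum on $V(\bar{\nu})^{\mathrm{sing}}$, so $\mathcal{B}_{n}(\bar{\nu})$ is semisimple and its joint eigenvalues separate points; hence $\phi(J_{n}(\bar{\nu}))=0$ for generic $\bar{z}$. Since the structure constants of $\mathcal{B}_{n}(\bar{\nu})$ depend polynomially on $\bar{z}$, the identity propagates to every $\bar{z}$ by Zariski closure.

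\medskip

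\noindent\emph{Reverse inclusion $\ker\phi\subseteq J_{n}(\bar{\nu})$.} By the forward inclusion, $\phi$ descends to a surjection $\bar\phi:\mathcal{F}_{n}\twoheadrightarrow\mathcal{B}_{n}(\bar{\nu})$, and it suffices to show equality of the $\C$-dimensions of source and target. On the Gaudin side, Mukhin--Tarasov--Varchenko give $\dim\mathcal{B}_{n}(\bar{\nu})=\dim V(\bar{\nu})^{\mathrm{sing}}$ for generic $\bar{z}$. On the oper side, for the same generic choices of $\bar{z}$ the scheme $\mathrm{Spec}\,\mathcal{F}_{n}$ parametrizes (via Proposition \ref{prop1}) precisely the monodromy-free Fuchsian operators with the prescribed local singular data, $\mathcal{F}_{n}$ is reduced, and the cardinality of $\mathrm{Spec}\,\mathcal{F}_{n}$ equals $\dim V(\bar{\nu})^{\mathrm{sing}}$ by the Bethe ansatz completeness theorem for $\gln$. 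A surjection between reduced $\C$-algebras of equal finite dimension is an isomorphism, so $\bar\phi$ is an isomorphism for generic $\bar{z}$. To extend to arbitrary pairwise distinct $\bar{z}$, we use a flatness argument: both $\mathcal{B}_{n}(\bar{\nu})$ and $\mathcal{F}_{n}$ form flat families over the configuration space of $m$-tuples of distinct points, the surjection $\bar\phi$ is a morphism of such families, and an isomorphism on the generic fibre of a flat family of algebras of fixed finite rank extends to every fibre.

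\medskip

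\noindent The decisive ingredient is the dimension identity $\dim\mathcal{F}_{n}=\dim V(\bar{\nu})^{\mathrm{sing}}$, which is essentially the content of the Mukhin--Tarasov--Varchenko solution of the Bethe ansatz conjecture for $\gln$; once this and the simple-spectrum theorem are available, the argument is largely formal. The remaining technical obstacle is the passage from generic to arbitrary $\bar{z}$: for special configurations the algebras need not be semisimple, and one must control this degeneration through flatness of the two families of algebras and the morphism between them.
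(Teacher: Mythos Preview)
The paper does not give a proof of this theorem: immediately after the statement it says ``This theorem is one of the main results in the study of the Bethe ansatz method for the quantum Gaudin model. It is proved in \cite{Ryb}.'' So there is no in-paper argument to compare against; the result is imported wholesale from Rybnikov's work (with the related Schubert-cell formulation attributed to \cite{MTV8}).

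Your sketch is in the right spirit---forward inclusion from the oper interpretation of eigenvalues, reverse inclusion from a dimension count tied to Bethe ansatz completeness---and for generic $\bar z$ this is essentially how the story goes. The genuine gap is the last step, the passage from generic to arbitrary pairwise distinct $\bar z$. You assert that both $\mathcal{B}_{n}(\bar\nu)$ and $\mathcal{F}_{n}$ form flat families over the configuration space and that a fibrewise surjection which is an isomorphism generically must be an isomorphism everywhere. Neither flatness is obvious: the dimension of the Bethe subalgebra could a priori drop at special $\bar z$, and the scheme $\operatorname{Spec}\mathcal{F}_{n}$ could acquire embedded or nilpotent structure there. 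In fact, establishing exactly this---that nothing degenerates at the special points---is where most of the work in \cite{Ryb} lies; it is handled not by a blanket flatness claim but by a carefully controlled limit procedure (letting marked points collide and using compatibility with the operadic/tensor-product structure of the Gaudin algebras) that reduces the statement to the Feigin--Frenkel description of the center at a single point. So your proposal correctly identifies the remaining technical obstacle, but the one-line ``flatness'' invocation does not discharge it; that step is the heart of the cited proof.
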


This theorem is one of the main results in the study of the Bethe ansatz method for the quantum Gaudin model. It is proved in \cite{Ryb}.

A result similar to Theorem \ref{T1}, can be found in work \cite{MTV8}, where it is proved that $\mathcal{B}_{n}(\bar{\nu})$ is isomorphic to the algebra of functions on an intersection of Schubert cells. Points of this intersection correspond to kernels of the Fuchsian differential operators from $\on{Spec} \mathcal{F}_{n}$. Considering $\mathcal{B}_{n}(\bar{\nu})$ as the algebra of functions on differential operators is more suitable for our goals since it is not clear how to generalize the intersection of Schubert cells to arbitrary complex $n$.

\section{The interpolation of the no-monodromy conditions}\label{6}
Recall that $\bar{\lambda}$ and $\bar{\mu}$ are sequences of partitions satisfying condition \eqref{condition on bipartitions}.
In this section, we are going to prove an analog of Theorem \ref{T1} for the algebra  $\underline{\mathcal{B}}_{t}(\bar{\lambda},\bar{\mu})$. 

For each $n\in\Z_{>0}$, let us now think of $\mathcal{A}_{n}$ as a subalgebra in the infinitely generated free commutative algebra $\mathcal{A}_{\infty}$ with free generators $y_{ij}^{(a)}$, $1\leq a\leq m$, $i\geq 1$, $1\leq j\leq i$. Recall the generators $\underline{B}_{ij}^{(a)}(\bar{\lambda},\bar{\mu})$, $1\leq a\leq m$, $i\geq 1$, $1\leq j\leq i$ of the algebra $\underline{\mathcal{B}}_{t}(\bar{\lambda},\bar{\mu})$. Our goal is to find the kernel of the homomorphism
\begin{equation*}
    \begin{split}
\underline{\phi}_{t}:\quad & \mathcal{A}_{\infty} \rightarrow \underline{\mathcal{B}}_{t}(\bar{\lambda},\bar{\mu}), \\
& y^{(a)}_{ij}\mapsto \underline{B}_{ij}^{(a)}(\bar{\lambda},\bar{\mu}).
    \end{split}
\end{equation*}  

Recall that $l_{\lambda}^{(a)}$ (resp., $l_{\mu}^{(a)}$) is the number of nonzero elements in the partition $\lambda^{(a)}$ (resp., $\mu^{(a)}$). Denote $l^{(a)} = l_{\lambda}^{(a)} + l_{\mu}^{(a)}$. 

Let $n$ be an integer such that $n\geq \max_{a} l^{(a)}$. For each $a=1\lc m$, let $\nu^{(a)}=(\nu^{(a)}_{1}\lc \nu^{(a)}_{n})$ be a $\gl_{n}$ weight defined by $\lambda^{(a)}$ and $\mu^{(a)}$ according to formula \eqref{lambda mu nu}.

To find the kernel of the map $\underline{\phi}_{t}$, we will first construct a generating set of the ideal $J_{n}(\bar{\nu})$ that "stabilizes for large $n$".

In this section, for convenience, we will sometimes drop the upper index $(a)$ assuming that it is equal to some fixed value, say, $a=1$, for example, $l\coloneqq l^{(1)}$, $r_{k}(\alpha)\coloneqq r_{k}^{(1)}(\alpha)$, $A_{ij}\coloneqq A^{(1)}_{ij}$, and so on.

Recall that 
\[ r_{k}(\alpha)=\sum_{i=0}^{n}b_{i,-i+k}\alpha^{\underline{n-i}},\]
where $\alpha^{\underline{j}}=\alpha(\alpha-1)(\alpha-2)\dots (\alpha-j+1)$ and $b_{i,-i+k}\in\mathcal{A}_{n}$ are the coefficients in the expansion
\[ \sum_{a=1}^{m}\sum_{j=1}^{i}\frac{y^{(a)}_{ij}}{(u-z_{a})^{j}}=b_{i,-i}(u-z_{1})^{-i}+b_{i,-i+1}(u-z_{1})^{-i+1}+b_{i,-i+2}(u-z_{1})^{-i+2}+\dots .\]

For each $i=0\lc n$, $j\geq -i$, define $\tilde{b}_{ij}\in\mathcal{A}_{n}$ by 
\begin{equation*}\label{b_i tilde}
r_{k}(\alpha)=\sum_{i=0}^{n}\tilde{b}_{i,-i+k}(\alpha-l_{\mu})^{\underline{n-i}},
\end{equation*}
and consider polynomials $\tilde{r}_{k}(\alpha)$ given by
\begin{equation*}
    \tilde{r}_{k}(\alpha) = \sum_{i=0}^{l+k} \tilde{b}_{i,-i+k}(\alpha-n+l_{\lambda}+k)^{\underline{l+k-i}},
\end{equation*}
where we assume that $\tilde{b}_{ij}=0$ if $i>n$.

Notice that 
\begin{align}
 & (\alpha-l_{\mu})^{\underline{n-l-k}}\tilde{r}_{k}(\alpha)=r_{k}(\alpha)-\sum_{i=l+k+1}^{n} b_{i,-i+k}(\alpha-l_{\mu})^{\underline{n-i}},\quad & \text{if}\quad k+l\leq n, \label{4.2} \\ 
 & \tilde{r}_{k}(\alpha)=(\alpha-n+l_{\lambda}+k)^{\underline{l+k-n}}r_{k}(\alpha),\quad & \text{if}\quad k+l\geq n. \label{4.3}
 \end{align}
 
For each $1\leq i<j\leq n$, let $M_{ij}$ be the matrix constructed from $A_{ij}$ by replacing each entry $r_{s}(\alpha)$ with $(\alpha - n+l_{\lambda}+k-d)^{\underline{k-d-s}}\tilde{r}_{s}(\alpha)$, where $k=m_{j}-m_{i}$, and $d+1$ is the number of the row containing the entry. Recall that the matrix $\bar{A}_{ij}$ was constructed from $A_{ij}$ by deleting certain rows and columns. Let $\bar{M}_{ij}$ be the matrix constructed from $M_{ij}$ in a similar way.

We will need the following two lemmas.

\begin{lem}\label{useful lemma 1}
Suppose that $l_{\lambda}\geq 1$. Then for each $j=1\lc l_{\lambda}$, $j<i\leq n$, we have 
\begin{equation}\label{4.1.1}
    r_{0}(m_{n+1-j})=c_{1}\tilde{r}_{0}(m_{n+1-j})+M_{1},
\end{equation}
\begin{equation}\label{4.1.2}
    \det \bar{A}_{n+1-i,n+1-j}= c_{2} \det \bar{M}_{n+1-i,n+1-j}+M_{2},
\end{equation}
where $c_{1}$, $c_{2}$ are nonzero complex numbers, and $M_{1}$, $M_{2}$ belong to the ideal in $\mathcal{A}_{n}$ generated by $\tilde{b}_{q,-q+s}$, $s\in \Z_{\geq 0}$, $q>l+s$.
\end{lem}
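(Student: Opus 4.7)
The plan is to use the hypothesis $l_\lambda\geq 1$ to first reduce to the case $l_\mu=0$, then apply identity \eqref{4.2} entrywise to pass from $r_s$ to $\tilde r_s$ modulo the ideal, and finally compare with $\bar M$ by a short telescoping calculation. The convention that the dropped index $a$ equals $1$ together with $l_\lambda\geq 1$ means $\lambda^{(1)}\neq\emptyset$, so by condition \eqref{condition on bipartitions} we have $1\in I_1$, hence $\mu^{(1)}=\emptyset$ and $l_\mu=0$. Once $l_\mu=0$, the defining expansions of $b_{i,-i+k}$ and $\tilde b_{i,-i+k}$ use the same basis $\{\alpha^{\underline{n-i}}\}$, so $\tilde b_{i,-i+k}=b_{i,-i+k}$ for every $i$ and $k$. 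In particular, each $b_{i',-i'+s}$ with $i'>l+s$ coincides with a generator of the ideal described in the statement.

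To prove \eqref{4.1.1}, I plug $\alpha=m_{n+1-j}=n+\lambda_j-j$ into \eqref{4.2} with $k=0$:
\[
r_0(m_{n+1-j})=(n+\lambda_j-j)^{\underline{n-l}}\,\tilde r_0(m_{n+1-j})+\sum_{i=l+1}^n b_{i,-i}\,(n+\lambda_j-j)^{\underline{n-i}}.
\]
The prefactor $c_1=(n+\lambda_j-j)^{\underline{n-l}}$ is a product of consecutive integers whose smallest factor is $\lambda_j-j+l_\lambda+1\geq 1$ (since $j\leq l_\lambda$ forces $\lambda_j\geq 1$), so $c_1$ is a positive integer. The remaining sum is a linear combination of $b_{i,-i}=\tilde b_{i,-i}$ with $i>l$, hence of elements of the ideal, which gives \eqref{4.1.1}.

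For \eqref{4.1.2}, I apply \eqref{4.2} entrywise to $\bar A_{n+1-i,n+1-j}$: the relation $r_s(\alpha)=\alpha^{\underline{n-l-s}}\tilde r_s(\alpha)+\sum_{i'>l+s}b_{i',-i'+s}\alpha^{\underline{n-i'}}$ has its remainder in the ideal. By multilinearity of the determinant in its columns, $\det\bar A_{n+1-i,n+1-j}$ is congruent, modulo the ideal, to $\det\bar A'$, where $\bar A'$ has entry $(m_I+d)^{\underline{n-l-c+d}}\tilde r_{c-d}(m_I+d)$ at position $(d+1,c)$ with $I=n+1-i$ and $s=c-d$. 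I then compare with the corresponding entry $(\lambda_j-j+l_\lambda)^{\underline{k-c}}\tilde r_{c-d}(m_I+d)$ of $\bar M_{n+1-i,n+1-j}$, where $k=m_{n+1-j}-m_I$. A direct computation shows that both falling-factorial prefactors terminate at the common smallest factor $m_I+c+l-n+1$ (using $l=l_\lambda$ and $k=n+\lambda_j-j-m_I$), so their ratio telescopes to
\[
\rho_d=\frac{(m_I+d)^{\underline{n-l-c+d}}}{(\lambda_j-j+l_\lambda)^{\underline{k-c}}}=\frac{(m_I+d)!}{(\lambda_j-j+l_\lambda)!},
\]
which depends only on $d$. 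Hence $\bar A'=\mathrm{diag}(\rho_d)\cdot\bar M_{n+1-i,n+1-j}$ on the retained rows and columns, and $c_2=\prod_d\rho_d$ is a nonzero positive rational since $m_I+d\geq 0$ and $\lambda_j-j+l_\lambda\geq 1$.

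The main obstacle is the telescoping identity for $\rho_d$, i.e.\ the fact that $(m_I+d)^{\underline{n-l-c+d}}$ and $(\lambda_j-j+l_\lambda)^{\underline{k-c}}$ genuinely share the same lowest factor regardless of $c$. This is the one algebraic identity that forces $\bar M$ (whose prefactor depends only on the column $c$) to be proportional to the mod-ideal reduction $\bar A'$; without this cancellation one would only obtain a mixed row- and column-scaling that does not collapse to a single scalar. Once this identity is in place, the rest of the argument is just multilinearity and bookkeeping.
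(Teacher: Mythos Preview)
Your proof of \eqref{4.1.1} is correct and matches the paper's argument. The issue is in your proof of \eqref{4.1.2}.

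You write that you ``apply \eqref{4.2} entrywise'' to obtain $r_s(\alpha)=\alpha^{\underline{n-l-s}}\tilde r_s(\alpha)+(\text{ideal})$. But \eqref{4.2} is only stated under the hypothesis $s+l\le n$, and this hypothesis genuinely fails for some entries of $\bar A_{n+1-i,n+1-j}$. Concretely, the $(d+1,c)$-entry carries $s=c-d$, so the largest $s$ in row $d+1$ is $k-d$; when $i=n$ (and $n>l$) one has $m_1=0$, $k=\lambda_j+n-j$, and hence $k+l>n$, so for $d=0$ there are entries with $s+l>n$. For those entries the exponent $n-l-c+d$ in your $\bar A'$ is negative, the falling factorial is not defined by the convention in use, and your quotient
\[
\rho_d=\frac{(m_I+d)^{\underline{n-l-c+d}}}{(\lambda_j-j+l_\lambda)^{\underline{k-c}}}
\]
is a ratio of two undefined (or simultaneously vanishing) quantities. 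The factorial rewriting $(m_I+d)!/(\lambda_j-j+l_\lambda)!$ that you extract from it is therefore not justified by the computation you give.

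The paper fixes exactly this by a two-case analysis per row: if $k-d+l\le n$ it uses \eqref{4.2} to get (row of $A$) $\equiv (m_I+d)^{\underline{n-l-k+d}}\cdot$(row of $M$) modulo the ideal, and if $k-d+l>n$ it uses \eqref{4.3} to get (row of $M$) $=(m_I-n+l+k)^{\underline{k-d+l-n}}\cdot$(row of $A$) exactly. Both row scalars are nonzero for the stated range of $j$ (this is where the checks $m_{n+1-j}>n-l$ and $m_{n+1-i}\ge 0$ enter), and together they give \eqref{4.1.2}. Incidentally, in each case the row scalar equals your $(m_I+d)!/(\lambda_j-j+l_\lambda)!$, so your final answer for $\rho_d$ is right; what is missing is the justification when \eqref{4.2} does not apply. (A minor point: your appeal to ``multilinearity in columns'' should be in rows, since $\rho_d$ depends on $d$.)
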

\begin{proof}
Since $l_{\lambda}\geq 1$, by condition \eqref{condition on bipartitions}, we have $l_{\mu}=0$ and $l=l_{\lambda}$.
Since $l\leq n$, one can use formula \eqref{4.2} for $k=0$ to see that 
\[ r_{0}(m_{n+1-j}) = (m_{n+1-j})^{\underline{n-l}}\tilde{r}_{0}(m_{n+1-j})+M_{1},\]
where $M_{1}$ belongs to the ideal in $\mathcal{A}_{n}$ generated by $\tilde{b}_{q,-q+s}$, $s\in \Z_{\geq 0}$, $q>l+s$.
For $j\leq l$, we have  $m_{n+1-j} = \lambda_{j}+n-j>n-l$, therefore $(m_{n+1-j})^{\underline{n-l}}\neq 0$, and formula \eqref{4.1.1} is proved.

To prove formula \eqref{4.1.2}, put $k=m_{n+1-j}-m_{n+1-i}$ and fix some $d=0\lc k-1$. First, suppose that $k-d+l\leq n$. Using \eqref{4.2}, one can see that up to summation with linear combinations of the elements $\tilde{b}_{q,-q+s}$, $s\in \Z_{\geq 0}$, $q>l+s$, the $(d+1)$-th row of $A_{n+1-i,n+1-j}$ is the $(d+1)$-th row of $M_{n+1-i,n+1-j}$ multiplied by $(m_{n+1-i}+d)^{\underline{n-l-k+d}}$. For $j\leq l$, we have $m_{n+1-i}=m_{n+1-j}-k=\lambda_{j}+n-j-k > n-l-k$. Therefore, $(m_{n+1-i}+d)^{\underline{n-l-k+d}}\neq 0$. 

If $k-d+l>n$, then, using \eqref{4.3}, one can see that up to summation with linear combinations of the elements $\tilde{b}_{q,-q+s}$, $s\in \Z_{\geq 0}$, $q>l+s$, the $(d+1)$-th row of $M_{n+1-i,n+1-j}$ is the $(d+1)$-th row of $A_{n+1-i,n+1-j}$ multiplied by $(m_{n+1-i}-n+l+k)^{\underline{k-d+l-n}}$. Since $l_{\mu}=0$, we have $m_{n+1-i}=\lambda_{n+1-i}+n-i\geq 0$, and $(m_{n+1-i}-n+l+k)^{\underline{k-d+l-n}}\neq 0$. Therefore, formula \eqref{4.1.2} is proved.
\end{proof}

\begin{lem}\label{useful lemma 2}
Suppose that $l_{\mu}\geq 1$. Then for each $i=1\lc l_{\mu}$, $i<j\leq n$, we have 
\begin{equation}\label{4.2.1}
    r_{0}(m_{i})=c_{1}\tilde{r}_{0}(m_{i})+M_{1},
\end{equation}
\begin{equation}\label{4.2.2}
    \det \bar{A}_{i,j}= c_{2} \det \bar{M}_{i,j}+M_{2},
\end{equation}
where $c_{1}$, $c_{2}$ are nonzero complex numbers, and $M_{1}$, $M_{2}$ belong to the ideal in $\mathcal{A}_{n}$ generated by $\tilde{b}_{q,-q+s}$, $s\in \Z_{\geq 0}$, $q>l+s$.
\end{lem}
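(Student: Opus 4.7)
The plan is to follow the template of the proof of Lemma \ref{useful lemma 1} with the roles of $\lambda$ and $\mu$ swapped, which is allowed by condition \eqref{condition on bipartitions}: the hypothesis $l_\mu \geq 1$ forces $l_\lambda = 0$ and $l = l_\mu$. In this setting the exponents are $m_j = j - 1 - \mu_j$ for $1 \leq j \leq l_\mu$ and $m_j = j - 1$ for $l_\mu < j \leq n$, giving the uniform bounds $m_i - l_\mu \leq -2$ for every $i \leq l_\mu$ and $m_j \leq n - 1$ for every $j \leq n$.

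For \eqref{4.2.1} I will apply \eqref{4.2} at $k = 0$ and $\alpha = m_i$, obtaining
\[
r_0(m_i) = (m_i - l_\mu)^{\underline{n-l}} \tilde{r}_0(m_i) + M_1,
\]
where $M_1$ is a $\C$-linear combination of $\tilde{b}_{q,-q}$ with $q > l$ and hence lies in the required ideal. The multiplier $c_1 = (m_i - l_\mu)^{\underline{n-l}}$ is a product of consecutive integers all $\leq -2$, hence nonzero.

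For \eqref{4.2.2} I will compare $\bar{A}_{i,j}$ and $\bar{M}_{i,j}$ row by row, in parallel with the proof of Lemma \ref{useful lemma 1}. For each row $d + 1$: if $k - d + l \leq n$, formula \eqref{4.2} converts the $(d+1)$-th row of $A_{i,j}$ into $(m_i + d - l_\mu)^{\underline{n - l - k + d}}$ times the corresponding row of $M_{i,j}$ modulo elements of the ideal generated by $\tilde{b}_{q, -q+s}$, $q > l + s$; if $k - d + l > n$, formula \eqref{4.3} converts the $(d+1)$-th row of $M_{i,j}$ into $(m_j - n)^{\underline{k + l - d - n}}$ times the corresponding row of $A_{i,j}$ modulo the same ideal. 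Multilinearity of the determinant then yields the identity with $c_2$ equal to the product, over surviving rows, of the first multiplier and the reciprocals of the second, and $M_2$ lying in the stated ideal.

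The main obstacle will be the nonvanishing of the first multiplier on rows surviving in $\bar{A}_{i,j}$: its factors fill the consecutive integers from $m_j - n + 1 \leq 0$ up to $m_i + d - l_\mu$, so it vanishes exactly when $\alpha \coloneqq m_i + d \geq l_\mu$. For $j \leq l_\mu$ the bound $\alpha \leq m_j - 1 \leq l_\mu - 3$ rules this out automatically. For $j > l_\mu$, any offending $\alpha$ lies in $[l_\mu, j - 2]$ and equals $m_{\alpha + 1}$ (using $m_l = l - 1$ for $l > l_\mu$), with $\alpha + 1 \in [l_\mu + 1, j - 1] \subseteq \{i+1, \ldots, j-1\}$ since $i \leq l_\mu$, so the offending row is deleted in the passage $A_{i,j} \to \bar{A}_{i,j}$ and contributes nothing. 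The second multiplier is always a product of integers $\leq -1$ and so poses no issue. Once these nonvanishing statements are in hand, the argument concludes exactly as in Lemma \ref{useful lemma 1}.
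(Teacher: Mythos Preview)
Your proposal is correct and follows essentially the same row-by-row strategy as the paper's proof: apply \eqref{4.2} when $k-d+l\le n$ and \eqref{4.3} when $k-d+l>n$, then check that the scalar multipliers on surviving rows are nonzero. Your treatment of the nonvanishing of the first multiplier is in fact slightly sharper than the paper's: the paper asserts that a non-deleted row satisfies $m_i+d<m_l$, which is not literally true (the gap between $m_l$ and $m_{l+1}=l$ can contain non-deleted values of $m_i+d$ exceeding $m_l$), whereas your argument---showing directly that $m_i+d\ge l_\mu$ forces the row to be deleted---yields the needed inequality $m_i+d<l_\mu$ without that misstep.
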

\begin{proof}
Since $l_{\mu}\geq 1$, by condition \eqref{condition on bipartitions}, we have $l_{\lambda}=0$ and $l=l_{\mu}$.
Since $l\leq n$, one can use formula \eqref{4.2} for $k=0$ to see that 
\[ r_{0}(m_{i}) = (m_{i}-l)^{\underline{n-l}}\tilde{r}_{0}(m_{i})+M_{1},\]
where $M_{1}$ belongs to the ideal in $\mathcal{A}_{n}$ generated by $\tilde{b}_{q,-q+s}$, $s\in \Z_{\geq 0}$, $q>l+s$.
For $i\leq l$, we have  $m_{i}-l = -\mu_{i}+i-1-l<0$, therefore $(m_{i}-l)^{\underline{n-l}}\neq 0$, and formula \eqref{4.2.1} is proved.

To prove formula \eqref{4.2.2}, put $k=m_{j}-m_{i}$ and fix some $d=0\lc k-1$. First, suppose that $k-d+l\leq n$. Using \eqref{4.2}, one can see that up to summation with linear combinations of the elements $\tilde{b}_{q,-q+s}$, $s\in \Z_{\geq 0}$, $q>l+s$, the $(d+1)$-th row of $A_{i,j}$ is the $(d+1)$-th row of $M_{i,j}$ multiplied by $(m_{i}+d-l)^{\underline{n-l-k+d}}$. Notice that if $(d+1)$-th row of $M_{ij}$ is not deleted when we obtain $\bar{M}_{ij}$, then $m_{i}+d < m_{l} = -\mu_{l}+l-1<l$. Therefore, $m_{i}+d-l<0$, so  $(m_{i}+d-l)^{\underline{n-l-k+d}}\neq 0$. 

If $k-d+l>n$, then, using \eqref{4.3}, one can see that up to summation with linear combinations of the elements $\tilde{b}_{q,-q+s}$, $s\in \Z_{\geq 0}$, $q>l+s$, the $(d+1)$-th row of $M_{i,j}$ is the $(d+1)$-th row of $A_{i,j}$ multiplied by $(m_{i}-n+k)^{\underline{k-d+l-n}}$. Since $l_{\lambda}=0$, we have $m_{i}-n+k=m_{j}-n=-\mu_{j}+j-1-n< 0$, and $(m_{i}-n+k)^{\underline{k-d+l-n}}\neq 0$. Therefore, formula \eqref{4.2.2} is proved.
\end{proof}

The elements $\tilde{r}_{k}(\alpha)$ and $\bar{M}_{ij}$ are constructed using the Laurent series of the coefficients of the differential operator $D^{\C [y]}$ (see \eqref{Dcy}) at $z_{1}$. Therefore, we denote them now $\tilde{r}^{(1)}_{k}(\alpha)$ and $\bar{M}^{(1)}_{ij}$, respectively. Repeating this construction for the Laurent series at $z_{2}\lc z_{m}$, we get similar elements $\tilde{r}^{(a)}_{k}(\alpha)$ and $\bar{M}^{(a)}_{ij}$, $a=2\lc m$.

Define $a_{ik}^{(\infty)}\in\mathcal{A}_{n}$, $i=1\lc n$, $k\in\Z_{\geq 1}$ to be the coefficients of the Taylor series of the rational function $\sum_{a=1}^{m}\sum_{j=1}^{i}\frac{y_{ij}^{(a)}}{(u-z_{a})^{j}}$ at infinity, that is
\begin{equation}\label{a infty}
\sum_{a=1}^{m}\sum_{j=1}^{i}\frac{y_{ij}^{(a)}x^{j}}{\left(1-xz_{a}\right)^{j}}=\sum_{k=1}^{\infty}a_{ik}^{(\infty)}x^{k}. 
\end{equation}

Recall that $I_{1}= \{a\,\vert\,\mu^{(a)}=\emptyset\}$ and $I_{2}= \{a\,\vert\,\lambda^{(a)}=\emptyset\}$.

\begin{prop}\label{prop 4.3}
 The ideal $J_{n}(\bar{\nu})$ is generated by the following elements:
 \begin{equation}\label{gen2.1}
     \tilde{b}_{i,-i+k}^{(a)}, \quad 0\leq k< n-l^{(a)},\quad k+l^{(a)}<i\leq n,\quad 1\leq a\leq m,
 \end{equation}
 \begin{equation}\label{gen2.2}
     \tilde{r}^{(a)}_{0}(m_{n+1-j}^{(a)}),\quad\det \bar{M}^{(a)}_{n+1-i,n+1-j},\quad 1\leq j\leq l^{(a)},\quad j<i\leq n,\quad a\in I_{1},
 \end{equation}
 \begin{equation}\label{gen2.3}
     \tilde{r}^{(a)}_{0}(m_{i}^{(a)}),\quad\det \bar{M}^{(a)}_{i,j},\quad 1\leq i\leq l^{(a)},\quad i<j\leq n,\quad a\in I_{2},
 \end{equation}
 \begin{equation}\label{gen2.4}
     a_{ij}^{(\infty)}, \quad 1\leq i\leq n,\quad 1\leq j\leq i-1.
 \end{equation}
 \end{prop}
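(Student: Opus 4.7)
My plan is to verify the two inclusions $J_n(\bar\nu) \subseteq \langle\eqref{gen2.1},\eqref{gen2.2},\eqref{gen2.3},\eqref{gen2.4}\rangle$ and $\supseteq$ by comparing the generating sets piece-by-piece, splitting the finite-point exponents into a ``boring'' block of consecutive integers coming from the zero entries of $\nu^{(a)}$ and an ``interesting'' block determined by the nonzero parts of $\lambda^{(a)}$ or $\mu^{(a)}$, and handling the condition at infinity separately. For the infinity condition, expanding $L_\infty(D^{\C [y]})$ and collecting the coefficient of $x^{-p-s}\partial_x^{n-p}$ produces a relation of the form $b_{p,-p-s}^{(\infty)} = (-1)^n\sum_{i=s+1}^{p}c_{n-i,n-p}\,a_{i,i-s}^{(\infty)}$ with nonzero combinatorial constants $c_{n-i,n-p}$ arising from expanding $(-x^2\partial_x)^{n-i}$. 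For each fixed $s$ this is an invertible triangular system in the unknowns $a_{i,i-s}^{(\infty)}$, so the ideals generated by $\{b_{ij}^{(\infty)}: -2i+1\le j\le -i-1\}$ and by $\{a_{ij}^{(\infty)}: 1\le j\le i-1\}$ coincide.

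For the conditions at each $z_a$, the change of basis from $\{\alpha^{\underline{n-i}}\}$ to $\{(\alpha-l_\mu^{(a)})^{\underline{n-i}}\}$ is triangular, so the $b_{i,-i+k}^{(a)}$ and $\tilde b_{i,-i+k}^{(a)}$ differ by an invertible linear change within each $k$-slice. Consider the case $a\in I_1$, so $l_\mu^{(a)}=0$ and the boring exponents are $0,1,\ldots,n-l^{(a)}-1$. When both $k_1<k_2\le n-l^{(a)}$ are boring, crossing out in $A^{(a)}_{k_1,k_2}$ the rows and columns containing the intermediate $r_0^{(a)}(m_l^{(a)})$ collapses the matrix to the single entry $r_{k_2-k_1}^{(a)}(k_1-1)$. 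Using $p^{\underline{n-i}}=0$ for $i<n-p$ gives
\[ r_{s}^{(a)}(p)=\sum_{i=n-p}^{n}\tilde b_{i,-i+s}^{(a)}\,p^{\underline{n-i}}, \]
and for $p\le n-l^{(a)}-s-1$ every summand satisfies $i>l^{(a)}+s$, so this determinant lies in $\langle\eqref{gen2.1}\rangle$. A triangular argument in $(s,p)$ then shows that, conversely, every generator of \eqref{gen2.1} arises this way from the boring $r_0^{(a)}$ and boring $\det\bar A^{(a)}_{k_1,k_2}$ relations, so these two families span the same sub-ideal of $J_n(\bar\nu)$.

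For $k_2$ interesting (equivalently $j\le l^{(a)}$), Lemma~\ref{useful lemma 1} directly yields $r_0^{(a)}(m_{n+1-j}^{(a)})=c_1\tilde r_0^{(a)}(m_{n+1-j}^{(a)})+M_1$ and $\det\bar A^{(a)}_{n+1-i,n+1-j}=c_2\det\bar M^{(a)}_{n+1-i,n+1-j}+M_2$ with $M_1,M_2\in\langle\eqref{gen2.1}\rangle$ and $c_1,c_2\ne 0$, which lets me swap the original generators for the tilded ones modulo \eqref{gen2.1}; the case $a\in I_2$ is handled symmetrically via Lemma~\ref{useful lemma 2}. Since the original ideal $J_n(\bar\nu)$ is now seen to be generated by the items already accounted for (the both-boring determinants, the boring $r_0$'s, the Lemma-handled interesting parts, and the $b^{(\infty)}$'s), the two generating families produce the same ideal.

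The main obstacle is the combinatorial bookkeeping: one has to show that the ``both-boring'' determinantal conditions produce exactly the family \eqref{gen2.1} with no gaps and no extraneous relations. The cleanest organization is to work modulo $\langle\eqref{gen2.1}\rangle$, under which $r_k^{(a)}(\alpha)$ becomes $(\alpha-l_\mu^{(a)})^{\underline{n-l^{(a)}-k}}\tilde r_k^{(a)}(\alpha)$ by \eqref{4.2}; the prefactor then automatically annihilates the boring exponents, while on the interesting ones Lemmas~\ref{useful lemma 1}--\ref{useful lemma 2} become equalities up to nonzero scalars. Verifying the nonvanishing of those scalars for every relevant $(i,j)$, and bookkeeping exactly which $\tilde b$'s appear in each expansion, constitutes the bulk of the technical work.
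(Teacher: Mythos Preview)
Your proposal is correct and follows essentially the same approach as the paper's proof. The paper likewise splits the exponents at each $z_a$ into the ``boring'' block $m_{l_\mu^{(a)}+i}=l_\mu^{(a)}+i-1$ and the ``interesting'' block, shows via the identity $r_k(l_\mu+i-1)=\sum_{j=n-i+1}^{n}\tilde b_{j,-j+k}(i-1)^{\underline{n-j}}$ (and its finite-difference inverse $\tilde b_{n-i+1,-n+i-1+k}=\tfrac{1}{(i-1)!}(T^{i-1}r_k)(l_\mu+i-1)$) that the both-boring $r_0$'s and $1\times 1$ determinants $\det\bar A_{i,i+k}$ span exactly the family \eqref{gen2.1}, invokes Lemmas~\ref{useful lemma 1}--\ref{useful lemma 2} for the interesting block, and handles infinity by the triangular relation $b^{(\infty)}_{i,-2i+j}=\sum_{l} c_l\,a^{(\infty)}_{i-l,j-l}$ with $c_0=1$; your argument is the same up to notation, with the paper supplying the explicit finite-difference inversion where you say ``triangular''.
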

 \begin{proof}
 For each $i=1\lc n-l$ and $k\in\Z\geq 0$ we have 
 \begin{equation}\label{r tilde b}
     r_{k}(m_{l_{\mu}+i})=r_{k}(l_{\mu}+i-1) = \sum_{j=n-i+1}^{n}\tilde{b}_{j,-j+k}(i-1)^{\underline{n-j}}.
 \end{equation}
 Using this, one can check that 
 \begin{equation}\label{tilde b r}
     \tilde{b}_{n-i+1, -n+i-1+k} = \frac{1}{(i-1)!}(T^{i-1}r_{k})(l_{\mu}+i-1),
 \end{equation}
 where $Tf(\alpha)=f(\alpha)-f(\alpha -1)$ for every function $f(\alpha)$.
 
 Formulas \eqref{r tilde b} and \eqref{tilde b r} for $k=0$ show that the elements $\tilde{b}_{i,-i}$, $l<i\leq n$ are linear combinations of $r_{0}(m_{i})$, $l_{\mu}< i\leq n-l_{\lambda}$, and the other way around.
 
 Notice that for $l_{\mu}< i<j\leq n-l_{\lambda}$, $\bar{A}_{ij}$ is a $1\times 1$ matrix with the entry $r_{j-i}(m_{i})$. Therefore, for each $k=1\lc n-l-1$, formulas \eqref{r tilde b} and \eqref{tilde b r} show that $b_{i,-i+k}$, $k+l< i\leq n$ are linear combinations of $A_{i,i+k}$, $l_{\mu}< i\leq n-l_{\lambda}-k$, and the other way around. 
 
 Also, one can check that 
 \[ b^{(\infty)}_{i,-2i+j}=\sum_{l=0}^{\min (i, j-1)}c_{l}a^{(\infty)}_{i-l,j-l} \]
 for some complex numbers $c_{l}$ such that $c_{0}=1$, which implies that $b_{ij}^{(\infty)}$, $i=1\lc n$, $j=-2i+1\lc -i-1$ are linear combinations of $a_{kl}^{(\infty)}$, $k=1\lc n$, $l=1\lc k-1$, and the other way around. 
 
 Now, the proposition follows from Lemmas \ref{useful lemma 1} and \ref{useful lemma 2}
 \end{proof}
 
 \begin{rem} Notice that if $a\in I_{1}$ and $j>0$, then $\tilde{b}_{i, -j}^{(a)}=b_{i, -j}^{(a)}=y_{ij}^{(a)}$, in particular, the fact that the elements \eqref{gen2.1} belong to $J_{n}(\bar{\nu})$ implies that the coefficients of the differential operator $\cdet\bigl( \partial_{u} - \mathcal{L}(u)\bigr)$ (see \eqref{cdet expansion}) have a pole at $z^{(a)}$ of order not higher than $l^{(a)}$. 
 \end{rem}
 
 Denote the generating set of $J_{n}(\nu)$ from Proposition \ref{prop 4.3} as $S_{n}$. Let $pr_{\infty , n}:\,\mathcal{A}_{\infty}\rightarrow \mathcal{A}_{n}$ be the projection sending $y_{ij}^{(a)}$, $i>n$ to zero. For each $t\in\C$, denote by $\pi_{t}$ the projection $\mathcal{A}_{\infty}[w]\rightarrow \mathcal{A}_{\infty}[w]/(w-t)\cong \mathcal{A}_{\infty}$. The set $S_{n}$ is constructed in such a way that for every $a\in S_{n}$, there exists a unique element $a_{[w]}\in\mathcal{A}_{\infty}[w]$ such that $pr_{\infty , n}\circ\pi_{n}(a_{[w]})=a$, and for all $n'\geq n$, we have $pr_{\infty , n'}\circ\pi_{n'}(a_{[w]})\in S_{n'}$. Denote $K=\max_{a} l_{a}$, $S_{n,[w]}=\{a_{[w]}\vert a\in S_{n}\}$, $S_{[w]}=\cup_{n=K}^{\infty}S_{n, [w]}$, and let $J_{[w]}$ be the ideal in $\mathcal{A}_{\infty}[w]$ generated by $S_{[w]}$. In Theorem \ref{main} below, we will prove that for all but finitely many $t$, the ideal $\underline{J}_{t}=\pi_{t}(J_{[w]})$ is the kernel of the map $\underline{\phi}_{t}$. 
 
 \begin{rem}\label{R2}
Notice that without condition \eqref{condition on bipartitions}, the set $S_{n}$ should also contain elements $\bar{M}^{(a)}_{i,n+1-j}$ with $a$ such that $l^{(a)}_{\lambda}>0$ and $l^{(a)}_{\mu}>0$, $1\leq i\leq l^{(a)}_{\mu}$, $1\leq j\leq l^{(a)}_{\lambda}$. It is not clear if for such elements, there are interpolations $(\bar{M}^{(a)}_{i,n+1-j})_{[w]}\in\mathcal{A}_{\infty}[w]$.
 \end{rem}
 
For any $n\in\Z_{>0}$, let $J_{>n}\subset\mathcal{A}_{\infty}$ be the ideal generated by $y_{ij}^{a}$, $i>n$. The following lemma is the main technical tool for proving Theorem \ref{main}.
\begin{lem}\label{tool}
\begin{enumerate}
\item We have $\mathcal{A}_{\infty}[w]= \mathcal{A}_{K m}[w]+J_{[w]}$.
\item For all $n\geq K m$, we have $J_{>n}\subset \underline{J}_{n}$.
\end{enumerate}
\end{lem}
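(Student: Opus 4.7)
The plan is to compute the interpolated generators of $S_{[w]}$ in closed form and then perform a Gaussian-type elimination in $\mathcal{A}_{\infty}[w]$ modulo $J_{[w]}$. The key computation, derived by applying the falling-factorial identity $(x+y)^{\underline{n}}=\sum_{s}\binom{n}{s}x^{\underline{s}}y^{\underline{n-s}}$ to expand $(\alpha-l_{\mu}^{(a)})^{\underline{n-i}}$ in the basis $\{\alpha^{\underline{n-j}}\}$, together with the observation that $b_{i,-i+k}^{(a)}=y_{i,i-k}^{(a)}$ whenever $k<i$ (only the $a$-th pole contributes to singular Laurent data at $z_{a}$), is the closed form
\[
(\tilde{b}_{i,-i+k}^{(a)})_{[w]}=\sum_{s=0}^{l_{\mu}^{(a)}}(w-i+s)^{\underline{s}}\binom{l_{\mu}^{(a)}}{s}\,y_{i-s,\,i-s-k}^{(a)},
\]
valid for $k+l^{(a)}<i$. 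Since the $s=0$ coefficient equals $1$, this encodes a triangular relation expressing $y_{i,i-k}^{(a)}$ modulo $J_{[w]}$ as a $\C[w]$-combination of $y_{i-s,i-s-k}^{(a)}$ for $s=1,\dots,l_{\mu}^{(a)}$. Under condition \eqref{condition on bipartitions} this collapses to $y_{i,i-k}^{(a)}\in J_{[w]}$ whenever $a\in I_{1}$ (since then $l_{\mu}^{(a)}=0$), and yields a genuine "shift by at most $l^{(a)}=l_{\mu}^{(a)}$" relation when $a\in I_{2}$.

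For part (1), iterate the triangular relation on any $y_{I,J}^{(a)}$ with $I>Km$ and $J>l^{(a)}$, producing a $\C[w]$-combination of $y_{I-s,J-s}^{(a)}$, and continue until either the first index drops below $Km$ (landing in $\mathcal{A}_{Km}[w]$) or the second index falls to $\le l^{(a)}$. For the "boundary" variables $y_{I',J'}^{(a)}$ that terminate with $I'>Km$ and $J'\le l^{(a)}$, invoke the $w$-independent generators
\[
a_{ij}^{(\infty)}=\sum_{a'=1}^{m}\sum_{j''\le\min(i,j)}\binom{j-1}{j-j''}z_{a'}^{j-j''}\,y_{i,j''}^{(a')},
\]
which, for each first index $I'>Km$, supply $I'-1$ linear relations among the $I'\cdot m$ variables at that first index. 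Substituting the reductions already established for $j''>l^{(a')}$ converts these into a $\C$-linear system on the remaining boundary variables; this system is nondegenerate by the Vandermonde determinant of the distinct points $z_{1},\dots,z_{m}$, so the boundary variables also reduce modulo $J_{[w]}$ to elements of $\mathcal{A}_{Km}[w]$, proving part~(1).

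For part~(2), specialize $w=n$ (with $n\ge Km$) in the generators of $J_{[w]}$ to obtain corresponding elements of $\underline{J}_{n}$. For $i>n$ the specialized coefficients $(n-i+s)^{\underline{s}}$ are nonzero whenever $s\le i-n-1$ and vanish otherwise; starting from $i=n+1$ (where only $s=0$ contributes, giving $y_{n+1,j}^{(a)}\in\underline{J}_{n}$ for $j>l^{(a)}$) and inducting upward on $i-n$, one obtains $y_{i,j}^{(a)}\in\underline{J}_{n}$ for all $i>n$ and $j>l^{(a)}$. Combining this with the $a_{ij}^{(\infty)}$ relations (now in $\underline{J}_{n}$) and the same Vandermonde closure as in part~(1) extends the conclusion to all $j$, yielding $J_{>n}\subset\underline{J}_{n}$. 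The main obstacle is precisely this Vandermonde closure step in both parts: one must combine the shift relations (triangular in the first-index direction) with the $a_{ij}^{(\infty)}$ relations (triangular in the second-index direction) and verify that the resulting combined system has full $\C$-rank on the boundary variables. This rests on the classical nonvanishing of $\det(z_{a}^{k})_{a,k}$ for distinct $z_{a}$, but carrying out the counting cleanly---tracking how many independent relations are produced when $n$ is only just above $Km$---is the technical heart of the proof.
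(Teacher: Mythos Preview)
Your ingredients are exactly the paper's: the triangular $\tilde b$-relations that shift both indices of $y^{(a)}_{i,j}$ down along a diagonal, the $a^{(\infty)}$-relations at each fixed first index, and a Vandermonde-type invertibility. (Your closed form for $(\tilde b_{i,-i+k}^{(a)})_{[w]}$ with the extra factor $\binom{l_\mu^{(a)}}{s}$ is in fact sharper than the paper's formula~\eqref{b tilde to b}, which drops this factor; nothing is affected since $\binom{l_\mu^{(a)}}{s}\ne0$ for $s\le l_\mu^{(a)}$.) The problem is purely organizational, and it creates a real gap.

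You attempt to run the $\tilde b$-reductions to completion \emph{before} bringing in the $a^{(\infty)}$-relations. In part~(2) this breaks: the claim that the $\tilde b$-relations alone force $y_{i,j}^{(a)}\in\underline J_n$ for all $i>n$ and $j>l^{(a)}$ is false when $a\in I_2$. Take $i=n+2$ and $j=l^{(a)}+1$. Specializing $(\tilde b_{i,-j}^{(a)})_{[w]}$ at $w=n$ kills only the terms with $i-s\le n$, i.e.\ $s\ge2$; what survives in $\underline J_n$ is
\[
y^{(a)}_{n+2,\,l^{(a)}+1}\;+\;c\,y^{(a)}_{n+1,\,l^{(a)}}\qquad(c\ne0),
\]
and the second term has second index $l^{(a)}\not>l^{(a)}$, so your induction hypothesis does not cover it. The $\tilde b$-relation moves along the diagonal $i-j=\mathrm{const}$, and for $a\in I_2$ it can land with second index $\le l^{(a)}$ while the first index is still $>n$. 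The same phenomenon contaminates part~(1): iterating from $(I,J)$ deposits boundary terms $y^{(a)}_{I',J'}$ with $J'\le l^{(a)}$ at many different levels $I'>Km$, so the linear system you must then solve with the $a^{(\infty)}$-relations is not block-by-first-index and is not governed by the single ``Vandermonde of $z_1,\dots,z_m$'' you invoke.

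The paper fixes this by \emph{interleaving} and inducting on the first index $i$. At each $i$ it orders all $y^{(a)}_{i,j}$ with $1\le j\le i-1$ into one vector $\boldsymbol y_i=(\boldsymbol y'_i\mid\boldsymbol y''_i)$, with $\boldsymbol y'_i$ of length $i-1$. For $i>Km$ every entry of $\boldsymbol y''_i$ has $j>K$, so a \emph{single} application of the $\tilde b$-relation reduces it to $J_{[w]}$ plus terms of strictly lower first index (handled by induction); then $\boldsymbol y'_i=L_1^{-1}\bigl(\boldsymbol a_i^{(\infty)}-L_2\boldsymbol y''_i\bigr)$, where the $(i-1)\times(i-1)$ matrix $L_1$ is shown invertible not by an ordinary Vandermonde but by a \emph{confluent} Vandermonde identity, formula~\eqref{detL1}. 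Part~(2) uses the same induction with base $i=n+1$. If you reorganize your argument this way---one first index at a time, using $\tilde b$ once to drop the level and then solving via the confluent Vandermonde---your proof goes through without further change.
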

\begin{proof}
 Fix $i>1$. Notice that for each $k\in\Z_{\geq 1}$, the element $a_{ik}^{(\infty)}$ does not depend on $n$, therefore $a_{ik}^{(\infty)}=(a_{ik}^{(\infty)})_{[w]}\in S_{[w]}$. Denote by $\boldsymbol{a}_{i}^{(\infty)}$ the vector 
\[ \left( a_{i,1}^{(\infty)}\lc a_{i,i-1}^{(\infty)}\right)^{T}.\]
Denote by $\boldsymbol{y}_{i}$ the vector 
\[ \left( y_{i,1}^{(1)}, y_{i,1}^{(2)}\lc y_{i,1}^{(m)}, y_{i,2}^{(1)}\lc y_{i,2}^{(m)}\lc  y_{i,i-1}^{(1)}\lc  y_{i,i-1}^{(m)}\right)^{T}.\]
Using \eqref{a infty}, one can check that 
\[\boldsymbol{a}_{i}^{(\infty)}= L \boldsymbol{y}_{i},\]
where $L$ is an $(i-1)\times m(i-1)$ matrix defined as follows:
for any $j=0\lc i-2$, the elements from $(jm+1)$-th to $(j+1)m$-th of the $k$-th row are
\[ z_{1}^{k-1-j}\binom{k-1}{j},\, z_{2}^{k-1-j}\binom{k-1}{j}\lc z_{m}^{k-1-j}\binom{k-1}{j}.\]

The example with $m=2$, $i=5$ is given below:
\[ L=
\begin{pmatrix}
1 & 1 & 0 & 0 & 0 & 0 & 0 & 0 \\
z_{1} & z_{2} & 1 & 1 & 0 & 0 & 0 & 0 \\
z_{1}^{2} & z_{2}^{2} & 2z_{1} & 2z_{2} & 1 & 1 & 0 & 0 \\
z_{1}^{3} & z_{2}^{3} & 3z_{1}^{2} & 3z_{2}^{2} & 3z_{1} & 3z_{2} & 1 & 1
\end{pmatrix}.
\]

Let $L_{1}$ and $L_{2}$ be the blocks of the matrix $L$ such that $L_{1}$ is a square matrix, and $L=(L_{1}\vert L_{2})$. Let $\boldsymbol{y}'_{i}$ and $\boldsymbol{y}''_{i}$ be the vectors such that $\boldsymbol{y}'_{i}$ has $i-1$ elements, and $\boldsymbol{y}_{i}^{T} = ({\boldsymbol{y}'}_{i}^{T}\vert  {\boldsymbol{y}''}_{i}^{T})$. Then 
\[L_{1}\boldsymbol{y}'_{i}=\boldsymbol{a}^{(\infty)}_{i}-L_{2}\boldsymbol{y}''_{i}.\]

We will show that 
\begin{enumerate}
    \item the matrix $L_{1}$ is invertible,
    \item if $i>K m$, then any element of $\boldsymbol{y}''_{i}$ is a linear combination of an element from $J_{[w]}$ and elements of vectors $\boldsymbol{y}_{1}\lc \boldsymbol{y}_{i-1}$,
    \item if $n\geq K m$ and $i>n$, then any element of $\boldsymbol{y}''_{i}$ is a linear combination of an element from $\underline{J}_{n}$ and elements of vectors $\boldsymbol{y}_{n+1},\boldsymbol{y}_{n+2} \lc \boldsymbol{y}_{i-1}$.
\end{enumerate}
Then the lemma will follow by induction on $i$.

If $i-1>m$, then introduce additional parameters $z_{m+1}\lc z_{i-1}$. 
Let $V_{i-1}(z_{1}\lc z_{i-1})$ be the Vandermonde determinant $\det (z_{a}^{b-1})_{1\leq a,b\leq i-1}$. One can check that 
\begin{equation}\label{detL1} 
\det L_{1} = \prod_{l,a}\raisebox{1.5pt}{$\frac{\raisebox{2.5pt}{\scalebox{1.05}{\ensuremath 1}}}{\raisebox{-5pt}{$l!$}}$}
 \,
    \begin{tabular}{p{11pt}}
     $\partial^{l}$ \\
     \hline
     \raisebox{-3pt}{\shifttext{-7pt}{$\partial z^{l}_{lm+a}$}}
  \end{tabular} 
  \quad\,\, \bigg\rvert_{z_{lm+a}=z_{a}} V_{i-1}(z_{1}\lc z_{i-1}),
\end{equation}  
where the product is taken over $l$, $a$ such that $1\leq a\leq m$, $l\geq 1$, and $lm+a\leq i-1$. 

Using $V_{i-1}(z_{1}\lc z_{i-1})=\prod_{1\leq j< j'\leq i-1}(z_{j'}-z_{j})$, we see that the right hand side of \eqref{detL1} has the form $(-1)^{\varepsilon}\prod_{1\leq j< j'\leq m}(z_{j'}-z_{j})^{d_{j',j}}$ with some $d_{j',j}>0$ and $\varepsilon\in\{0,1\}$. Since $z_{a}\neq z_{b}$ for $a\neq b$, we have $\det L_{1}\neq 0$, therefore, $L_{1}$ is invertible.

One can check that for any $n>i$, $j\leq i$, $a=1\lc m$, we have 
\begin{equation}\label{b tilde to b}
\tilde{b}_{i,-j}^{(a)}=b_{i,-j}^{(a)}+\sum_{s=1}^{\min(l_{\mu},i)}b_{i-s,-j+s}^{(a)}(n-i+1)(n-i+2)\dots (n-i+s).
\end{equation}
Suppose that $i>Km$. Take $j\leq i$ such that $j>K$. Then for all $a=1\lc m$, $\tilde{b}_{i,-j}^{(a)}\in S_{n}$, and  $b_{i-s,-j+s}^{(a)}=y_{i-s,j-s}^{(a)}$, $s=0\lc l_{\mu}^{(a)}$. Therefore, formula \eqref{b tilde to b} gives the following expression for $(\tilde{b}_{i,-j}^{(a)})_{[w]}\in S_{[w]}$:
\begin{equation}\label{bw tilde}
(\tilde{b}_{i,-j}^{(a)})_{[w]}=y_{i,-j}^{(a)}+\sum_{s=1}^{\min(l_{\mu},i)}y_{i-s,-j+s}^{(a)}(w-i+1)(w-i+2)\dots (w-i+s).
\end{equation}
Moreover, for any integer $n\in\Z$, $1\leq n <i$, if we put $w=n$ in \eqref{bw tilde}, then all terms in the sum on the right hand side of \eqref{bw tilde} corresponding to $i-s\leq n$ will become zero. 

Notice that if $i>Km$, then $\boldsymbol{y}''_{i}$ contains only elements $y_{ij}^{(a)}$ with $j>K$, so we proved the lemma.

\end{proof}

 Denote $\mathcal{F}_{[w]}=\mathcal{A}_{\infty}[w]/J_{[w]}$. By Lemma \ref{tool}, part (1), $\mathcal{F}_{[w]}$ is a finitely generated $\C [w]$-algebra 
 
 Consider the torsion ideal of $\mathcal{F}_{[w]}$
 \[ \operatorname{Tors}\,\mathcal{F}_{[w]} = \{ a\in \mathcal{F}_{[w]}\,\vert\, pa=0\text{ for some } p\in\C [w]\},\]
 and the quotient $\mathcal{F}^{\,t.f.}_{[w]}=\mathcal{F}_{[w]}/\operatorname{Tors}\,\mathcal{F}_{[w]}$. Clearly,  $\mathcal{F}^{\,t.f.}_{[w]}$ is a finitely generated $\C [w]$-algebra as well.
 
 For a commutative ring $R$ and its elements $r_{1}\lc r_{d}\in R$, denote by $R_{r_{1}\lc r_{d}}$ the localization of $R$ by the multiplicative set $\{r_{1}^{i_{1}}\dots r_{d}^{i_{d}}\,\vert\, i_{1}\lc i_{d}\in\Z_{\geq 0}\}.$ Notice that the kernel of the homomorphism $R\rightarrow R_{r_{1}\lc r_{d}}$, $r\mapsto \frac{r}{1}$ is exactly the set of elements $r\in R$ such that $r_{i}r=0$ for some $i=1\lc d$.
 
 Let us use the same notation for a polynomial in $\C [w]$ and its image in $\mathcal{F}_{[w]}$, $\mathcal{F}^{\, t.f.}_{[w]}$, or $\mathcal{B}_{[w]}(\bar{\lambda},\bar{\mu})$. Since there are no elements $f\in \mathcal{F}^{\, t.f.}_{[w]}$ such that $pf=0$ for some $p\in\C [w]$, we can identify $\mathcal{F}^{\, t.f.}_{[w]}$ with the corresponding subalgebra of $(\mathcal{F}^{\, t.f.}_{[w]})_{p_{1}\lc p_{d}}$ for any $p_{1}\lc p_{d}\in\C [w]$.
 
 \begin{lem}\label{loc}
 There exist finitely many nonzero polynomials $p_{1}\lc p_{d}\in\C [w]$ such that $(\mathcal{F}^{\, t.f.}_{[w]})_{p_{1}\lc p_{d}}$ is a free $(\C [w])_{p_{1}\lc p_{d}}$-module.
 \end{lem}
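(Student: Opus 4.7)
My plan is to invoke Grothendieck's generic freeness theorem. First I observe, using Lemma \ref{tool}(1), that $\mathcal{F}_{[w]}=\mathcal{A}_\infty[w]/J_{[w]}$ is a quotient of $\mathcal{A}_{Km}[w]$, which is a polynomial ring in finitely many variables over $\C[w]$. It follows that $\mathcal{F}_{[w]}$, and hence its quotient $\mathcal{F}^{\,t.f.}_{[w]}$, is a finitely generated $\C[w]$-algebra. By construction, $\mathcal{F}^{\,t.f.}_{[w]}$ is torsion-free as a $\C[w]$-module (the torsion submodule has been divided out).

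I next apply generic freeness in the following form: for a Noetherian domain $A$, a finitely generated $A$-algebra $B$, and a finitely generated $B$-module $M$, there exists a nonzero $p\in A$ such that $M_p$ is a free $A_p$-module. Taking $A=\C[w]$ (a Noetherian domain), $B=\mathcal{F}^{\,t.f.}_{[w]}$, and $M=B$ viewed as a module over itself (finitely generated by $1$), the hypotheses are satisfied by the preceding paragraph, and the conclusion produces a single nonzero polynomial $p\in\C[w]$ such that $(\mathcal{F}^{\,t.f.}_{[w]})_p$ is a free $\C[w]_p$-module. Setting $d=1$ and $p_1=p$ then proves the lemma.

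The main obstacle is relying on generic freeness as a black box. An elementary alternative would be to show that the generic rank $\dim_{\C(w)}\bigl(\C(w)\otimes_{\C[w]}\mathcal{F}^{\,t.f.}_{[w]}\bigr)$ is finite, lift a $\C(w)$-basis to elements of $\mathcal{F}^{\,t.f.}_{[w]}$, express each of the finitely many $\C[w]$-algebra generators of $\mathcal{F}^{\,t.f.}_{[w]}$ in terms of this basis, and choose the $p_i$'s to invert all denominators appearing; freeness over the localized PID $\C[w]_{p_1,\ldots,p_d}$ would then follow from the structure theorem for finitely generated torsion-free modules over a PID. Establishing finiteness of the generic rank is the nontrivial content of generic freeness in this situation, so this elementary route is essentially equivalent, and I would therefore favor the direct citation.
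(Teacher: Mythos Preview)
Your invocation of Grothendieck's generic freeness is correct and suffices: $\C[w]$ is a Noetherian integral domain, $\mathcal{F}^{\,t.f.}_{[w]}$ is a finitely generated $\C[w]$-algebra by Lemma~\ref{tool}(1), and generic freeness (see, e.g., EGA~IV\textsubscript{2}, Lemme~6.9.2, or Eisenbud, \emph{Commutative Algebra}, Theorem~14.4) produces a single nonzero $p\in\C[w]$ such that $(\mathcal{F}^{\,t.f.}_{[w]})_{p}$ is free over $\C[w]_{p}$. Taking $d=1$, $p_1=p$ gives the statement. Torsion-freeness is not even needed for the citation; it is used later in the paper to identify $\mathcal{F}^{\,t.f.}_{[w]}$ with a subalgebra of its localization.

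This is a genuinely different route from the paper's. The paper gives a self-contained, constructive argument: it orders the monomials in a finite set of algebra generators $x_1,\dots,x_k$ by inverse lexicographic order, collects the ``leading'' monomials $x^{\vec{i}_1},\dots,x^{\vec{i}_d}$ of a finite set of relations (via Hilbert's basis theorem), takes $p_1,\dots,p_d$ to be the corresponding leading coefficients in $\C[w]$, and shows that after inverting these the remaining monomials form a free basis. In effect the paper reproves generic freeness in this special case by a Gr\"obner-type reduction. The paper's own remark following the lemma concedes that the result is ``a consequence of a general fact in algebra'' for which they ``did not find the exact statement in the literature''; you have supplied that statement. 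Your approach is shorter and perfectly adequate for the downstream applications (Lemmas~\ref{loc evaluation} and~\ref{4.8} only use freeness, not any explicit description of a basis), while the paper's approach has the virtue of being elementary and explicit.

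One small caveat on your aside: the ``elementary alternative'' you sketch presupposes that $\C(w)\otimes_{\C[w]}\mathcal{F}^{\,t.f.}_{[w]}$ is finite-dimensional over $\C(w)$. Generic freeness does not assert this in general (the free module may have infinite rank), and the paper's construction does not obviously yield a finite basis either. So this alternative would require an additional finiteness argument specific to the situation; sticking with the direct citation, as you recommend, is the cleaner choice.
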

 \begin{rem*}
This lemma holds true if we replace $\mathcal{F}^{\, t.f.}_{[w]}$ with any finitely generated commutative $\C[w]$-algebra, therefore, it is a consequence of a general fact in algebra. Since we did not find the exact statement in the literature, we provide a proof here. 
 \end{rem*}
 \begin{proof}
 Let $x_{1}\lc x_{k}$ be the generators of $\mathcal{F}^{\, t.f.}_{[w]}$ as a $\C [w]$-algebra. For any $\vec{i}=(i_{1}\lc i_{k})\in\Z_{\geq 0}^{k}$, denote $x^{\vec{i}}=x_{1}^{i_{1}}\dots x_{k}^{i_{k}}$. Consider the "inverse lexicographic" ordering on $\Z_{\geq 0}^{k}$, that is
 $(i_{1}\lc i_{k})<(j_{1}\lc j_{k})$
 if and only if for the maximal index $k'$ such that $i_{k'}\neq j_{k'}$, we have $i_{k'}< j_{k'}$.  If $\vec{i},\vec{j}\in\Z_{\geq0}^{k}$ are such that $\vec{i}<\vec{j}$ we will say that $x^{\vec{i}}$ is smaller then $x^{\vec{j}}$. Let $\mathcal{Z}$ be the subset of $\mathcal{F}^{\, t.f.}_{[w]}$ consisting of monomials $x^{\vec{i}}$ such that $p_{\vec{i}}x^{\vec{i}}+\sum_{\vec{j}<\vec{i}}q_{\vec{j}}x^{\vec{j}}=0$ for some $p_{\vec{i}},q_{\vec{j}}\in \C [w]$, $p_{\vec{i}}\neq 0$. Since $\mathcal{F}^{\,t.f.}_{[w]}$ is a finitely generated $\C[w]$-algebra, by Hilbert's basis theorem, the ideal $M_{\mathcal{Z}}\subset \mathcal{F}^{\, t.f.}_{[w]}$ generated by $\mathcal{Z}$ has a finite generating set $\widetilde{\mathcal{Z}}$.
 
 Notice that $\mathcal{Z}$ generates $M_{\mathcal{Z}}$ as a $\C [w]$-module. Therefore, we can assume that $\widetilde{\mathcal{Z}}$ is a finite subset of $\mathcal{Z}$. Then $\widetilde{\mathcal{Z}}=\{x^{\vec{i}_{1}}\lc x^{\vec{i}_{d}}\}$ for some $\vec{i}_{1}\lc \vec{i}_{d}\in \Z^{k}_{\geq 0}$, and there exist polynomials $p_{1}\lc p_{d}\in\C [w]$ such that $p_{j}x^{\vec{i}_{j}}+\sum_{\vec{s}<\vec{i}_{j}}q_{j,\vec{s}}x^{\vec{s}}=0$, $j=1\lc d$ for some $q_{j,\vec{s}}\in\C [w]$. We claim that $(\mathcal{F}^{\, t.f.}_{[w]})_{p_{1}\lc p_{d}}$ is a free $(\C [w])_{p_{1}\lc p_{d}}$-module with a basis $\bar{\mathcal{Z}}$ consisting of all monomials $x^{\vec{i}}$ that do not belong to $\mathcal{Z}$. 
 
 First, let us show that the set $\bar{\mathcal{Z}}$ is linearly independent. Suppose that for some $x^{\vec{j}_{1}}\lc x^{\vec{j}_{s}}\in\bar{\mathcal{Z}}$ and nonzero $q_{\vec{j}_{1}}\lc q_{\vec{j}_{s}}\in(\C [w])_{p_{1}\lc p_{d}}$, we have $\sum_{i=1}^{s}q_{\vec{j}_{i}}x^{\vec{j}_{i}}=0$. We can find nonzero polynomial $p\in\C [w]$ such that $pq_{\vec{j}_{i}}\in\C [w]$, $i=1\lc s$. Denote $\vec{j}=\max_{i}\vec{j}_{i}$. Then $\sum_{i=1}^{s}pq_{\vec{j}_{i}}x^{\vec{j}_{i}}=0$ implies that $x^{\vec{j}}\in \mathcal{Z}$, which is a contradiction since $\bar{\mathcal{Z}}\cap\mathcal{Z}=\emptyset$. 
 
 To prove that $\bar{\mathcal{Z}}$ generates $(\mathcal{F}^{\, t.f.}_{[w]})_{p_{1}\lc p_{d}}$ as a $(\C [w])_{p_{1}\lc p_{d}}$-module, let us first assume that $d=1$, that is $\widetilde{\mathcal{Z}}$ consists of a single element $x^{\vec{i}_{1}}$. 
 
 Recall that $p_{1}x^{\vec{i}_{1}}+\sum_{\vec{s}<\vec{i}_{1}}q_{1,\vec{s}}x^{\vec{s}}=0$ for some $q_{1,\vec{s}}\in\C [w]$. Therefore, in $(\mathcal{F}^{\, t.f.}_{[w]})_{p_{1}}$, we have 
 \begin{equation}\label{4.6.1}
     x^{\vec{i}_{1}} = -\sum_{\vec{s}<\vec{i}_{1}}\left(\frac{q_{1,\vec{s}}}{p_{1}}\right)x^{\vec{s}}.
 \end{equation}
 Consider a monomial $f=x^{\vec{i}_{1}+\vec{j}}\in\mathcal{Z}$ for some $\vec{j}=(j_{1}\lc j_{k})\in\Z_{\geq 0}^{k}$.  Let $k'$ be the minimal positive integer such that for all $k''=k'+1\lc k$, we have $j_{k''}=0$.
 We will prove by induction on $k'$ that $f$ is a linear (over $(\C [w])_{p_{1}}$) combination of monomials from $\bar{\mathcal{Z}}$ smaller then $f$. The base $k'=0$ is given by the formula \eqref{4.6.1}. If $k'>0$, then by induction assumption $f$ is a linear combination of monomials from $\bar{\mathcal{Z}}$ smaller then $f$ and monomials of the form $x^{\vec{i}_{1}}x_{1}^{j'_{1}}x_{2}^{j'_{2}}\dots x_{k'}^{j'_{k'}}$, where $j'_{k'}<j_{k'}$. Then, we proceed by induction on $j_{k'}$.
 
 If $d>1$, then we use similar arguments to prove that
 \begin{enumerate}
     \item monomials that do not belong to the ideal generated by $x^{\vec{i}_{1}}$ generate $(\mathcal{F}^{\, t.f.}_{[w]})_{p_{1}}$ as a $(\C [w])_{p_{1}}$-module,
     \item monomials that do not belong to the ideal generated by $x^{\vec{i}_{1}}$ and $x^{\vec{i}_{2}}$ generate $(\mathcal{F}^{\, t.f.}_{[w]})_{p_{1},p_{2}}$ as a $(\C [w])_{p_{1},p_{2}}$-module, and so on.
 \end{enumerate}
 \end{proof}
 
 Let $p_{1}\lc p_{d}$ be the polynomials from Lemma \ref{loc}. Denote $\mathcal{F}^{\, loc}_{[w]}=(\mathcal{F}^{\, t.f.}_{[w]})_{p_{1}\lc p_{d}}$. Recall the ideal $\underline{J}_{t}\subset \mathcal{A}_{\infty}$ introduced above. Denote $\underline{\mathcal{F}}_{t}=\mathcal{A}_{\infty}/\underline{J}_{t}\cong \mathcal{F}_{[w]}/(w-t)$. 
 \begin{lem}\label{loc evaluation}
 For all but finitely many $t\in\C$, we have
 \[ \mathcal{F}^{\, loc}_{[w]}/(w-t)\cong\underline{\mathcal{F}}_{t}.\]
 \end{lem}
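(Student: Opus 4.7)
The plan is to break the map $\mathcal{F}_{[w]}\to\mathcal{F}^{\, loc}_{[w]}$ into the two steps used in the construction of $\mathcal{F}^{\, loc}_{[w]}$—first the quotient by the torsion ideal, then the localization at $p_{1}\lc p_{d}$—and to show that, after reducing modulo $(w-t)$, for all but finitely many $t\in\C$ each step becomes an isomorphism. The composition will then give $\mathcal{F}^{\, loc}_{[w]}/(w-t)\cong\mathcal{F}_{[w]}/(w-t)\cong\underline{\mathcal{F}}_{t}$.

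First I would observe that $\mathcal{F}_{[w]}$ is Noetherian: by part (1) of Lemma \ref{tool} it is a finitely generated $\C[w]$-algebra, so the Hilbert basis theorem applies. Consequently $\operatorname{Tors}\,\mathcal{F}_{[w]}$ is a finitely generated ideal. Pick ideal generators $a_{1}\lc a_{s}$ and, for each $j$, a nonzero polynomial $q_{j}(w)\in\C[w]$ with $q_{j}(w)\,a_{j}=0$. Let $t_{1}\lc t_{N}$ be the finite set consisting of all roots of $p_{1}\lc p_{d}$ and of $q_{1}\lc q_{s}$; the exceptional values of $t$ excluded in the statement will be the $t_{i}$.

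Fix $t\notin\{t_{1}\lc t_{N}\}$. To handle the first step—the surjection $\mathcal{F}_{[w]}/(w-t)\twoheadrightarrow\mathcal{F}^{\,t.f.}_{[w]}/(w-t)$ being injective—it is enough to check that $\operatorname{Tors}\,\mathcal{F}_{[w]}\subseteq (w-t)\,\mathcal{F}_{[w]}$, and for this it is enough to treat each generator $a_{j}$. Writing $q_{j}(w)=q_{j}(t)+(w-t)\,q_{j}'(w)$ and multiplying by $a_{j}$, the relation $q_{j}(w)\,a_{j}=0$ yields $a_{j}=-q_{j}(t)^{-1}(w-t)\,q_{j}'(w)\,a_{j}\in(w-t)\,\mathcal{F}_{[w]}$, using $q_{j}(t)\neq 0$. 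For the second step, note that $\mathcal{F}^{\,t.f.}_{[w]}$ is $\C[w]$-torsion-free and hence embeds into $\mathcal{F}^{\, loc}_{[w]}$. Since localization commutes with the quotient by $(w-t)$, the module $\mathcal{F}^{\, loc}_{[w]}/(w-t)$ is the localization of $\mathcal{F}^{\,t.f.}_{[w]}/(w-t)$ at the images of $p_{1}\lc p_{d}$ in $\C[w]/(w-t)=\C$; but those images are nonzero complex numbers by the choice of $t$, hence already units, so the localization is the identity.

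Composing the two isomorphisms gives $\mathcal{F}^{\, loc}_{[w]}/(w-t)\cong\underline{\mathcal{F}}_{t}$ for $t\notin\{t_{1}\lc t_{N}\}$, which is the claim. The only nontrivial input is the finite generation of $\operatorname{Tors}\,\mathcal{F}_{[w]}$, which comes directly from Lemma \ref{tool}(1); the rest is routine commutative algebra, and the main ``obstacle'' is simply the bookkeeping required to pin down the finite set of bad $t$.
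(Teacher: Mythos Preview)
Your proof is correct and follows essentially the same approach as the paper: both arguments use the Noetherianity of $\mathcal{F}_{[w]}$ (via Lemma \ref{tool}(1) and Hilbert's basis theorem) to reduce the torsion step to finitely many generators $a_{j}$ annihilated by $q_{j}(w)$, show $\operatorname{Tors}\,\mathcal{F}_{[w]}\subset(w-t)$ whenever $t$ avoids the roots of the $q_{j}$, and then observe that the localization step is harmless away from the roots of $p_{1}\lc p_{d}$. Your one-line computation $a_{j}=-q_{j}(t)^{-1}(w-t)q_{j}'(w)a_{j}$ is a slightly slicker variant of the paper's iterative root-by-root argument, but the content is the same.
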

\begin{proof}
Denote by $\pi_{t}$ the projection $\mathcal{F}_{[w]}\rightarrow \underline{\mathcal{F}}_{t}$. Notice that if $a\in\mathcal{F}_{[w]}$, $p\in\C [w]$, and  $t\in\C$ are such that $pa=0$, and $t$ is not a root of $p$, then $\pi_{t}(a)=0$. Indeed, let $t_{0}$ be a root of $p$. Then $p=(w-t_{0})\tilde{p}$ for some $\tilde{p}\in\C [w]$ such that $\deg\tilde{p}=\deg p -1$. We have
\begin{equation*}
    0  = \pi_{t}\left[\bigl((w-t)\tilde{p}-(w-t_{0})\tilde{p}\bigr)a\right] 
     = (t-t_{0})\pi_{t}(\tilde{p}a).
\end{equation*}
Since $t\neq t_{0}$, this implies $\pi_{t}(\tilde{p}a)=0$. If $t_{1}$ is a root of $\tilde{p}$, then a similar calculation gives $\pi_{t}(\dbtilde{p}a)=0$, where $\tilde{p}=(w-t_{1})\dbtilde{p}$, $\deg\dbtilde{p}=\deg \tilde{p} -1$. Repeating this argument $\deg p$ times, we get $\pi_{t}(a)=0$.

Since $\mathcal{F}_{[w]}$ is a finitely generated $\C [w]$-algebra, by Hilbert's basis theorem, the ideal $\operatorname{Tors}\,\mathcal{F}_{[w]}$ is finitely generated. Let $a_{1}\lc a_{k}$ be the generators of $\operatorname{Tors}\,\mathcal{F}_{[w]}$, and let $q_{1}\lc q_{k}\in\C [w]$ be such that $q_{i} a_{i}=0$, $i=1\lc k$. Then
\begin{equation}\label{Tors}
    \operatorname{Tors}\,\mathcal{F}_{[w]}\subset (w-t)
\end{equation}
for all $t$ except the roots of $q_{1}\lc q_{k}$. Therefore, $\mathcal{F}^{\, t.f}_{[w]}/(w-t)\cong\underline{\mathcal{F}}_{t}$ for all but finitely many $t\in\C$. Since $\mathcal{F}^{\, t.f}_{[w]}/(w-t)\cong \mathcal{F}^{\, loc}_{[w]}/(w-t)$ for all $t\in\C$ except the roots of the polynomials $p_{1}\lc p_{d}$, the lemma follows.
\end{proof}

\begin{lem}\label{4.8}
Let $\pi_{t}$ denote the projection $\mathcal{F}^{\, loc}_{[w]}\rightarrow\mathcal{F}^{\, loc}_{[w]}/(w-t)$. Suppose that for some $p\in \mathcal{F}^{\, loc}_{[w]}$ and $N_{0}\in\Z_{>0}$, we have $\pi_{n}(p)=0$, $n\in\Z_{\geq N_{0}}$. Then $p=0$. 
\end{lem}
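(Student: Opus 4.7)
The plan is to mimic almost verbatim the argument of Lemma \ref{1.1}, with the free-module property of $\mathcal{F}_{[w]}^{\,loc}$ over $(\C[w])_{p_{1}\lc p_{d}}$ established in Lemma \ref{loc} playing the role that the free $\C[w]$-module structure played for $\mathcal{B}_{[w]}(\bar{\lambda},\bar{\mu})$.

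First I would fix a $(\C[w])_{p_{1}\lc p_{d}}$-basis $\{f_{\alpha}\}_{\alpha\in I}$ of $\mathcal{F}_{[w]}^{\,loc}$ given by Lemma \ref{loc}, and expand $p=\sum_{\alpha}q_{\alpha}f_{\alpha}$ with $q_{\alpha}\in (\C[w])_{p_{1}\lc p_{d}}$ and only finitely many $q_{\alpha}$ nonzero. Each $q_{\alpha}$ is a rational function in $w$ of the form $a_{\alpha}/(p_{1}^{k_{1}}\dots p_{d}^{k_{d}})$ for some $a_{\alpha}\in\C[w]$.

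Next I would restrict attention to integers $n\ge N_{0}$ that are not roots of any of the finitely many polynomials $p_{1}\lc p_{d}$; this excludes only finitely many $n$, so the remaining set is still infinite. For such $n$ the localization map $(\C[w])_{p_{1}\lc p_{d}}\rightarrow (\C[w])_{p_{1}\lc p_{d}}/(w-n)$ is just evaluation at $n$ and lands in $\C$, so $\mathcal{F}_{[w]}^{\,loc}/(w-n)$ inherits a $\C$-basis $\{\bar f_{\alpha}\}$ from $\{f_{\alpha}\}$ (tensoring a free module with a quotient of the base ring preserves the basis). Under this identification $\pi_{n}(p)=\sum_{\alpha}q_{\alpha}(n)\,\bar f_{\alpha}$, so the hypothesis $\pi_{n}(p)=0$ gives $q_{\alpha}(n)=0$ for every $\alpha$.

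Finally, each $q_{\alpha}=a_{\alpha}/(p_{1}^{k_{1}}\dots p_{d}^{k_{d}})$ with $a_{\alpha}\in\C[w]$ satisfies $a_{\alpha}(n)=0$ for infinitely many $n\in\Z_{\ge N_{0}}$ (since only finitely many $n$ are excluded above), so the polynomial $a_{\alpha}$ is identically zero and hence $q_{\alpha}=0$. As this holds for every $\alpha$, we conclude $p=0$. The only potentially delicate point is justifying that the quotient by $(w-n)$ of the localization remains well-behaved and that the images of the $f_{\alpha}$ remain linearly independent over $\C$, but this is immediate from the freeness asserted in Lemma \ref{loc} together with the fact that $(w-n)$ is not a unit in $(\C[w])_{p_{1}\lc p_{d}}$ when $n$ is not a root of any $p_{j}$; there is no substantive obstacle.
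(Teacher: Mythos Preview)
Your proposal is correct and follows exactly the approach the paper intends: the paper's own proof simply says that since $\mathcal{F}^{\,loc}_{[w]}$ is a free $(\C[w])_{p_{1},\dots,p_{d}}$-module by Lemma~\ref{loc}, the argument of Lemma~\ref{1.1} carries over verbatim, which is precisely what you have spelled out. The only minor addition you make explicit---restricting to $n$ that are not roots of the $p_{j}$ so that $(w-n)$ is not a unit---is the natural adaptation to the localized setting and raises no issues.
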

\begin{proof}
Since by Lemma \ref{loc}, $\mathcal{F}^{\, loc}_{[w]}$ is a free $(\C [w])_{p_{1}\lc p_{d}}$-module, Lemma \ref{4.8} can be proved similarly to Lemma \ref{1.1}.
\end{proof}

Recall the algebra $\mathcal{B}_{[w]}(\bar{\lambda},\bar{\mu})$, see Section \ref{4}. Let $p_{1}\lc p_{d}\in\C [w]$ be the polynomials from Lemma \ref{loc}. Denote $\mathcal{B}^{loc}_{[w]}(\bar{\lambda},\bar{\mu})=\bigl(\mathcal{B}_{[w]}(\bar{\lambda},\bar{\mu})\bigr)_{p_{1}\lc p_{d}}$. Since $\mathcal{B}_{[w]}(\bar{\lambda},\bar{\mu})$ is a torsion-free $\C [w]$-module, we can identify it with the corresponding subalgebra of $\mathcal{B}^{loc}_{[w]}(\bar{\lambda},\bar{\mu})$. For all $t\in\C$ except the roots of the polynomials $p_{1}\lc p_{d}$, we have $\mathcal{B}^{loc}_{[w]}(\bar{\lambda},\bar{\mu})/(w-t)\cong\mathcal{B}_{[w]}(\bar{\lambda},\bar{\mu})/(w-t)$. Together with formula \eqref{B eval}, this implies 
\begin{equation}\label{B loc evaluation}
    \mathcal{B}^{loc}_{[w]}(\bar{\lambda},\bar{\mu})/(w-t)\cong \underline{\mathcal{B}}_{t}(\bar{\lambda},\bar{\mu})
\end{equation}
for all but finitely many $t\in\C$.

We are finally ready to prove the main result of the paper:
\begin{thm}\label{main}
For all but finitely many $t\in\C$, the kernel of the homomorphism 
\begin{equation*}
    \begin{split}
\underline{\phi}_{t}:\quad & \mathcal{A}_{\infty} \rightarrow \underline{\mathcal{B}}_{t}(\bar{\lambda},\bar{\mu}), \\
& y^{(a)}_{ij}\mapsto \underline{B}_{ij}^{(a)}(\bar{\lambda},\bar{\mu}).
    \end{split}
\end{equation*}
is the ideal $\underline{J}_{t}$.
\end{thm}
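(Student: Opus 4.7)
The plan is to lift everything to the $\C[w]$-family level, prove that the natural interpolating map becomes an isomorphism after the localization of Lemma \ref{loc}, and then specialize at $w=t$ for generic $t$. First, I would construct an interpolating homomorphism $\phi_{[w]}:\mathcal{A}_\infty[w]\to\mathcal{B}_{[w]}(\bar\lambda,\bar\mu)$ sending $y_{ij}^{(a)}$ to the $w$-polynomial analogue $\underline{B}_{ij,[w]}^{(a)}$ of $\underline{B}_{ij}^{(a)}(\bar\lambda,\bar\mu)$, produced by running the construction of Section \ref{4} with $w$ in place of $t$; this is well defined because the polynomials $P_{rsj}^{(a)}$ arising via Lemma \ref{Newton} do not depend on $n$. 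By construction the specialization $\pi_t\circ\phi_{[w]}$ equals $\underline{\phi}_t\circ\pi_t$ for every $t\in\C$.

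Next I would show that $\phi_{[w]}(J_{[w]})=0$, so $\phi_{[w]}$ descends to a map $\bar\phi_{[w]}:\mathcal{F}_{[w]}\to\mathcal{B}_{[w]}$. Pick any generator $s_{[w]}\in S_{n_0,[w]}$. For every integer $n\geq\max(n_0,N)$ the defining property of $S_{[w]}$ gives $pr_{\infty,n}(\pi_n(s_{[w]}))\in S_n\subset J_n(\bar\nu)$; since the generators $\underline{B}_{ij}^{(a)}$ with $i>n$ vanish in $\underline{\mathcal{B}}_n\cong\mathcal{B}_n(\bar\nu)$ (they are coefficients of a differential operator of order $n$), the map $\underline{\phi}_n$ factors through $pr_{\infty,n}$ to yield the homomorphism $\phi_n:\mathcal{A}_n\to\mathcal{B}_n(\bar\nu)$ of Theorem \ref{T1}. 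Combining these observations with Theorem \ref{T1} one obtains $\pi_n(\phi_{[w]}(s_{[w]}))=\phi_n(pr_{\infty,n}(\pi_n(s_{[w]})))=0$, and Lemma \ref{1.1} then forces $\phi_{[w]}(s_{[w]})=0$.

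The induced map $\bar\phi_{[w]}$ is clearly surjective, and since $\mathcal{B}_{[w]}$ is torsion-free over $\C[w]$, it factors through $\mathcal{F}^{\,t.f.}_{[w]}$; localizing by the polynomials $p_1,\dots,p_d$ of Lemma \ref{loc} gives a surjection $\bar\phi_{[w]}^{loc}:\mathcal{F}^{loc}_{[w]}\to\mathcal{B}^{loc}_{[w]}$. The main obstacle is to prove injectivity of $\bar\phi_{[w]}^{loc}$. I would do this via Lemma \ref{4.8}: it suffices to verify injectivity of $\pi_n\circ\bar\phi_{[w]}^{loc}$ for all sufficiently large integers $n$ avoiding the (finitely many) roots of $p_1,\dots,p_d$ together with the exceptional values appearing in Lemma \ref{loc evaluation} and in \eqref{B loc evaluation}. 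For such $n$, specialization identifies $\mathcal{F}^{loc}_{[w]}/(w-n)\cong\underline{\mathcal{F}}_n$ and $\mathcal{B}^{loc}_{[w]}/(w-n)\cong\underline{\mathcal{B}}_n\cong\mathcal{B}_n(\bar\nu)$. By Lemma \ref{tool}(2) one has $\underline{\mathcal{F}}_n\cong\mathcal{A}_n/pr_{\infty,n}(\underline{J}_n)$; the inclusion $pr_{\infty,n}(\underline{J}_n)\subseteq J_n(\bar\nu)$ follows from the vanishing established in the previous paragraph, and the reverse inclusion comes from the equality $S_n=pr_{\infty,n}(\pi_n(S_{n,[w]}))$, which gives $J_n(\bar\nu)\subseteq pr_{\infty,n}(\underline{J}_n)$. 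Hence the specialized map is exactly the isomorphism $\mathcal{A}_n/J_n(\bar\nu)\cong\mathcal{B}_n(\bar\nu)$ of Theorem \ref{T1}, completing the injectivity argument.

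Since $\bar\phi_{[w]}^{loc}$ is thus an isomorphism, base change to $\C[w]/(w-t)$ remains one, and for all $t$ outside the finite exceptional set collected above this base change is precisely the induced homomorphism $\underline{\mathcal{F}}_t\to\underline{\mathcal{B}}_t$ factoring $\underline{\phi}_t$, yielding $\ker\underline{\phi}_t=\underline{J}_t$ as required.
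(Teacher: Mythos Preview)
Your proposal is correct and follows essentially the same strategy as the paper's proof: lift $\underline{\phi}_t$ to a $\C[w]$-map $\phi_{[w]}$, show $J_{[w]}\subset\ker\phi_{[w]}$ by checking at large integers via Lemma~\ref{1.1}, pass to the torsion-free localized quotient, and prove the resulting map $\mathcal{F}^{loc}_{[w]}\to\mathcal{B}^{loc}_{[w]}$ is an isomorphism by verifying the specializations at large $n$ are isomorphisms (using Lemma~\ref{tool}(2), Theorem~\ref{T1}, and Lemma~\ref{4.8}), then specialize. One small imprecision: where you write ``injectivity of $\pi_n\circ\bar\phi_{[w]}^{loc}$'' you mean injectivity of the induced map on the quotient $\mathcal{F}^{loc}_{[w]}/(w-n)\to\mathcal{B}^{loc}_{[w]}/(w-n)$; and your assertion that $\underline{B}_{ij}^{(a)}$ vanishes in $\underline{\mathcal{B}}_n$ for $i>n$ relies on the Newton identities of Lemma~\ref{Newton} continuing to hold for $k>n$ with $\sigma_k=0$ (true for Manin matrices, but not explicitly stated there) --- the paper sidesteps this in the first half by noting each $s_{[w]}\in S_{[w]}$ already lies in $\mathcal{A}_n[w]$ for $n$ large, though it uses the same vanishing implicitly in the computation $\ker\underline{\phi}_n=J_n(\bar\nu)+I_{>n}$.
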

\begin{proof}
Let $B_{ij}^{(a)}[w]$ be the generators of $\mathcal{B}_{[w]}(\bar{\lambda},\bar{\mu})$, which are analogous to the generators $\underline{B}_{ij}^{(a)}(\bar{\lambda},\bar{\mu})$ of $\underline{\mathcal{B}}_{t}(\bar{\lambda},\bar{\mu})$. Consider the map $\phi_{[w]}:\, \mathcal{A}_{\infty} [w]\rightarrow \mathcal{B}_{[w]}(\bar{\lambda},\bar{\mu})$ sending $y_{ij}^{(a)}$ to $B_{ij}^{(a)}[w]$ and $w$ to $w$. 

Let us use the notation $\pi_{t}$ for the projection $A\rightarrow A/(w-t)$, where $A$ is one of the algebras $\mathcal{A}_{\infty} [w]$, $\mathcal{F}_{[w]}$, $\mathcal{F}^{\,loc}_{[w]}$, $\mathcal{B}_{[w]}(\bar{\lambda},\bar{\mu})$, $\mathcal{B}^{loc}_{[w]}(\bar{\lambda},\bar{\mu})$. For all $t\in\C$, we have 
\begin{equation*}
\pi_{t}\circ \phi_{[w]} = \underline{\phi}_{t}\circ\pi_{t}.
\end{equation*}

Let $a$ be an element of $S_{[w]}$. Since by definition, $S_{[w]}=\cup_{n=K}^{\infty}S_{n,[w]}$, we have $a\in S_{N'(a),[w]}$ for some $N'(a)\in\Z_{\geq K}$. Then $pr_{\infty, n}\circ\pi_{n}(a)\in S_{n}$ for all $n\geq N'(a)$. On the other hand, since $\mathcal{A}_{\infty} [w] = \cup_{n=1}^{\infty}\mathcal{A}_{n} [w]$ and $\mathcal{A}_{n_{1}} [w]\subset \mathcal{A}_{n_{2}} [w]$ for $n_{1}<n_{2}$, there exists $N''(a)\in \Z_{>0}$ such that $a\in \mathcal{A}_{n} [w]$ for all $n\geq N''(a)$. Therefore, if $n\geq N'(a)$ and $n\geq N''(a)$, then $\pi_{n}(a)\in S_{n}$. 

Recall that $N=\sum_{a}(|\lambda^{(a)}|+|\mu^{(a)}|)$. Set $N_{0}=\max (N'(a),\, N''(a), N)$. Then for all $n\geq N_{0}$, we have
\[\pi_{n}\circ\phi_{[w]}(a) = \underline{\phi}_{n}\circ\pi_{n}(a)=\phi_{n}\circ\pi_{n}(a) = 0.\]
Here, in the second equality, we used that 
\begin{enumerate}
\item since $n\geq N''(a)$, we have $\pi_{n}(a)\in\mathcal{A}_{n}$,
\item since $n\geq N$, by Lemma \ref{B is B}, we have $\underline{\mathcal{B}}_{n}(\bar{\lambda},\bar{\mu})\cong\mathcal{B}_{n}(\bar{\nu})$, therefore, $\underline{\phi}_{n}\big\rvert_{\mathcal{A}_{n}}=\phi_{n}$.
\end{enumerate}
In the third equality, we used that since $n\geq N'(a)$ and $n\geq N''(a)$, we have $\pi_{n}(a)\in S_{n}\subset\ker \phi_{n}$. 

Since $\pi_{n}\circ\phi_{[w]}(a)=0$ for all $n\geq N_{0}$, by Lemma \ref{1.1}, we have $\phi_{[w]}(a)=0$. Thus, $J_{[w]}\subset\ker\phi_{[w]}$, in particular, $\phi_{[w]}$ induces a homomorphism $\phi_{[w]}^{\mathcal{F}}:\mathcal{F}_{[w]}\rightarrow \mathcal{B}_{[w]}(\bar{\lambda},\bar{\mu})$.
The map $\phi_{[w]}^{\mathcal{F}}\circ\pi_{t}$ factors through the map $\pi_{t}:\mathcal{F}_{[w]}\rightarrow \underline{\mathcal{F}}_{t}$. Let $\underline{\phi}_{t}^{\mathcal{F}}:\underline{\mathcal{F}}_{t}\rightarrow \underline{\mathcal{B}}_{t}(\bar{\lambda},\bar{\mu})$ be the homomorphism such that $\pi_{t}\circ\phi_{[w]}^{\mathcal{F}} = \underline{\phi}_{t}^{\mathcal{F}}\circ\pi_{t}$.

Observe that $\operatorname{Tors}\,\mathcal{F}_{[w]}\subset \ker \phi_{[w]}^{\mathcal{F}}$. Indeed, if $a\in \operatorname{Tors}\,\mathcal{F}_{[w]}$, then by \eqref{Tors}, $\pi_{t}(a)=0$ for all $t\in\C\setminus I$, where $I$ is a finite set. Let $N_{0}\in\Z_{>0}$ be greater then any integer in $I$. Then for any $n\geq N_{0}$, we have
\[\pi_{n}\circ\phi_{[w]}^{\mathcal{F}}(a) = \underline{\phi}_{n}^{\mathcal{F}}\circ\pi_{n}(a) = 0. \]
Therefore, by Lemma \ref{1.1}, $\phi_{[w]}^{\mathcal{F}}(a) =0$.

Since $\operatorname{Tors}\,\mathcal{F}_{[w]}\subset \ker \phi_{[w]}^{\mathcal{F}}$, the map $\phi_{[w]}^{\mathcal{F}}$ induces a homomorphism $(\phi_{[w]}^{\mathcal{F}})^{t.f.}:\mathcal{F}^{\,t.f.}_{[w]}\rightarrow \mathcal{B}_{[w]}(\bar{\lambda},\bar{\mu})$, which has a unique extension $(\phi_{[w]}^{\mathcal{F}})^{\,loc}:\mathcal{F}^{loc}_{[w]}\rightarrow \mathcal{B}^{loc}_{[w]}(\bar{\lambda},\bar{\mu})$. We are going to prove that $(\phi_{[w]}^{\mathcal{F}})^{\,loc}$ is an isomorphism. Then the theorem will follow from Lemma \ref{loc evaluation} and formula \eqref{B loc evaluation}.

First, observe that if $n\geq Km$ and $n\geq N$, then $\ker \underline{\phi}_{n}^{\mathcal{F}} = 0$. Indeed, for such $n$, we can write
\begin{align*}
    \ker \underline{\phi}_{n} & = J_{n}(\bar{\nu})+I_{>n} = pr_{\infty, n}\, \underline{J}_{n}+I_{>n} \\
    & = \underline{J}_{n}+I_{>n}=\underline{J}_{n}.
\end{align*}
Here, in the first equality, we used that since $n\geq N$, by Lemma \ref{B is B}, we have $\underline{\mathcal{B}}_{n}(\bar{\lambda},\bar{\mu})\cong\mathcal{B}_{n}(\bar{\lambda},\bar{\mu})$; in the last equality, we used that $n\geq Km$, and applied Lemma \ref{tool}, part (2). We obtained that $\ker \underline{\phi}_{n}=\underline{J}_{n}$, therefore, $\ker\,\underline{\phi}_{n}^{\mathcal{F}} = 0$. 

Now, let $\widetilde{I}$ be the minimal finite subset of $\C$, such that \eqref{B loc evaluation} holds for all $t\in\C\setminus\widetilde{I}$, and let $N_{\widetilde{I}}\in\Z_{>0}$ be greater then any integer in $\widetilde{I}$. Consider an element $a$ from $\ker\, (\phi_{[w]}^{\mathcal{F}})^{\,loc}$, and set $N_{0}=\max (Km, N_{\widetilde{I}}, N)$. Take any $n\in\Z_{\geq N_{0}}$. Since $n\geq N_{\widetilde{I}}$, we have $\pi_{n}\circ(\phi_{[w]}^{\mathcal{F}})^{\,loc} = \underline{\phi}_{n}^{\mathcal{F}}\circ\pi_{n}$. Therefore, $\pi_{n}(a)\in\ker\,\underline{\phi}_{n}^{\mathcal{F}}$. Since $n\geq Km$ and $n\geq N$, we have $\ker\,\underline{\phi}_{n}^{\mathcal{F}} = 0$, so $\pi_{n}(a)=0$. By Lemma \ref{4.8}, we have $a=0$, therefore, $(\phi_{[w]}^{\mathcal{F}})^{\,loc}$ is an isomorphism.

\end{proof}

\section{On ratios of differential operators with no monodromy}\label{7}
\subsection{Pseudo-differential operators}
Let $\mathcal{A}$ be an associative unital commutative algebra, and let $\partial:\, \mathcal{A}\rightarrow\mathcal{A}$ be a derivation on $\mathcal{A}$. Then a pseudo-differential operator of order $t\in\C$ with coefficitents in $\mathcal{A}$ is a formal series of the form
\begin{equation}\label{ps dif op}
\sum_{r=0}^{\infty}a_{r}\partial^{t-r},\quad\quad a_{r}\in\mathcal{A},\quad a_{0}\neq 0.
\end{equation}
Denote the set of all pseudo-differential operators with coefficients in $\mathcal{A}$ as $\Psi\mathcal{D}(\mathcal{A})$. One can add and multiply series of the form \eqref{ps dif op} in a usual way if for all $a\in\mathcal{A}$, we set
\[ \partial^{\mu}a=\sum_{i=0}^{\infty}\frac{\mu^{\underline{i}}}{i!}(\partial^{i}a)\partial^{\mu-i}. \]
It can be checked that this multiplication is well-defined and associative, thus, making $\Psi\mathcal{D}(\mathcal{A})$ an associative algebra.
\subsection{}\label{7.2}
Fix a sequence of $m$ pairwise different complex numbers $\bar{z}=(z_{1}\lc z_{m})$. Recall the differential operator $D^{\C [y]}\in\mathcal{D}(\mathcal{A}_{n})$ defined in \eqref{Dcy}. We wil now consider two algebras, $\mathcal{A}_{n}$ and $\mathcal{A}_{n'}$ for some other natural number $n'$, together. Therefore, to distinguish between the differential operator $D^{\C [y]}$ associated with $\mathcal{A}_{n}$ and the differential operator $D^{\C [y]}$ associated with $\mathcal{A}_{n'}$, we will denote them as $D_{n}$ and $D_{n'}$, respectively. Both $D_{n}$ and $D_{n'}$ can be thought of as elements of $\Psi\mathcal{D}\bigl((\mathcal{A}_{n}\otimes\mathcal{A}_{n'})(u)\bigr)$. Then the pseudo-differential operator $D_{n|n'}=D_{n}D_{n'}^{-1}\in\Psi\mathcal{D}\bigl((\mathcal{A}_{n}\otimes\mathcal{A}_{n'})(u)\bigr)$ is of the form
\[D_{n|n'}=\partial_{u}^{n-n'}+\sum_{i=1}^{\infty}\sum_{a=1}^{m}\sum_{j=1}^{i}\frac{c_{ij}^{(a)}}{(u-z_{a})^{j}}\partial_{u}^{n-n'-i} .\]
for some $c_{ij}^{(a)}\in\mathcal{A}_{n}\otimes\mathcal{A}_{n'}$. Let $\mathcal{A}_{n|n'}$ be the unital subalgebra of $\mathcal{A}_{n}\otimes\mathcal{A}_{n'}$ generated by $c_{ij}^{(a)}$, $i\in \Z_{\geq 1}$, $j=1\lc i$, $a=1\lc m$. 

Let $\bar{\nu}=(\nu^{(1)}\lc \nu^{(m)})$ be a sequence of dominant integral $\gl_{n}$-weights, where $\nu^{(i)}=(\nu^{(i)}_{1}\lc \nu^{(i)}_{n})$ such that $\nu^{(i)}_{j}\geq 0$, $i=1\lc m$, $j=1\lc n$. Let $\bar{\eta}=(\eta^{(1)}\lc \eta^{(m)})$ be a sequence of dominant integral $\gl_{n'}$-weights, where $\eta^{(i)}=(-\eta^{(i)}_{n'}\lc -\eta^{(i)}_{1})$ such that $\eta^{(i)}_{j}\geq 0$, $i=1\lc m$, $j=1\lc n'$. For each $i=1\lc m$, $j=1\lc n'$, let ${}^{c}\eta^{(i)}_{j}$ denote the greatest number such that $\eta^{(i)}_{j'}\geq j$ for each $j'\leq {}^{c}\eta^{(i)}_{j}$. 
For each $i=1\lc m$, $j\in Z_{\geq 1}$, define a non-negative integer $\lambda^{(i)}_{j}$ as follows
\[ \lambda^{(i)}_{j}=
\begin{cases}
\nu_{j}^{(i)} & j=1\lc n, \\
{}^{c}\eta^{(i)}_{j-n} & j=n\lc n+n', \\
0 & j>n+n'.
\end{cases}
\]
We will assume that $\bar{\nu}$ and $\bar{\eta}$ are such that for each $i=1\lc k$, the sequence $\lambda^{(i)}=(\lambda^{(i)}_{1}, \lambda^{(i)}_{2}, \dots)$ is a partition satisfying the property $\lambda^{(i)}_{n+1}\leq n'$ (such partition is called a $n|n'$-hook partition). We will write $\bar{\lambda}$ for the sequence of partitions $(\lambda^{(1)}\lc\lambda^{(k)})$.

It is convenient to depict a partition $(\lambda_{1},\lambda_{2}, \dots)$ by its Young diagram, that is, by a collection of rows of boxes aligned by the left-most box such that the $j$-th row consists of $\lambda_{j}$ boxes. (Here, we number the rows from top to bottom). Then, for some fixed $i$, the Young diagram of the partition $\lambda^{(i)}$ defined above looks as follows:
\begin{center}
\begin{tikzpicture}
\draw[black] (0,0) rectangle ++(3.2,-0.4)  (3.3,-0.15) node[anchor=west]{\scalebox{0.8}{$\nu^{(i)}_{1}$}};
\draw[black] (0,-0.4) rectangle ++(2.8,-0.4) (2.9,-0.6) node[anchor=west]{\scalebox{0.8}{$\nu^{(i)}_{2}$}};
\draw[black] (0,-0.8) rectangle ++(2,-0.4) (2.1,-1.2) node[anchor=west]{\scalebox{0.8}{$\dots$}};
\draw[black] (0,-1.2) rectangle ++(0.4,-2.4) (0.25,-3.6) node[anchor=north]{\scalebox{0.8}{$\eta^{(i)}_{1}$}};
\draw[black] (0.4,-1.2) rectangle ++(0.4,-2) (0.73,-3.2) node[anchor=north]{\scalebox{0.8}{$\eta^{(i)}_{2}$}};
\draw[black] (0.8,-1.2) rectangle ++(0.4,-1.2) (1.05,-2.6) node[anchor=north]{\scalebox{0.8}{$\dots$}};
\draw[black] (1.2,-1.2) rectangle ++(0.4,-0.4);
\end{tikzpicture}
\end{center}

Recall the ideals $J_{n}(\bar{\nu})$ and $\underline{J}_{t}(\bar{\lambda},\bar{\mu})$ introduced in Sections \ref{5.2} and \ref{6}, respectively (here, we indicated the dependence of $\underline{J}_{t}$ on $\bar{\lambda}$ and $\bar{\mu}$). Let $\bar{\emptyset}$ denote the sequence of partitions with all elements equal to $(0,0,\dots )$.
Introduce two ideals, $J_{n|n'}$ and $\tilde{J}_{n|n'}$, of the algebra $\mathcal{A}_{n|n'}$ as follows:
\[J_{n|n'}=(J_{n}(\bar{\nu})\otimes J_{n'}(\bar{\eta}))\cap\mathcal{A}_{n|n'},\]
and $\tilde{J}_{n|n'}$ is the image of the ideal $\underline{J}_{t}(\bar{\lambda},\bar{\emptyset})$ under the surjective homomorphism 
\begin{align*}
    \mathcal{A}_{\infty} &\rightarrow  \mathcal{A}_{n|n'},\\
    y_{ij}^{(a)} & \mapsto  c_{ij}^{(a)},
\end{align*}
where $\bar{\nu}$, $\bar{\eta}$, $\bar{\lambda}$ are related as described above. The following theorem is the main result of this section:

\begin{thm}\label{main2}
    We have $\tilde{J}_{n|n'}\subset J_{n|n'}$
\end{thm}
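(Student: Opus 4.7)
The plan is to show that, under the substitution $y_{ij}^{(a)} \mapsto c_{ij}^{(a)}$, every generator of the ideal $\underline{J}_{n-n'}(\bar\lambda, \bar\emptyset) \subset \mathcal{A}_\infty$ lands in the ideal $J_n(\bar\nu) \otimes J_{n'}(\bar\eta) \subset \mathcal{A}_n \otimes \mathcal{A}_{n'}$; since the image of $\underline{J}_{n-n'}(\bar\lambda, \bar\emptyset)$ under the surjection $\mathcal{A}_\infty \twoheadrightarrow \mathcal{A}_{n|n'}$ is by definition $\tilde J_{n|n'}$, this suffices. Throughout the argument I work in the quotient ring
\[
R \coloneqq (\mathcal{A}_n \otimes \mathcal{A}_{n'})\big/\bigl(J_n(\bar\nu) \otimes J_{n'}(\bar\eta)\bigr).
\]
By the converse direction of Proposition \ref{prop1}, viewing $D_n$ and $D_{n'}$ as Fuchsian operators with coefficients in $R(u)$, they admit at each $z_a$ bases of formal power series solutions $f_1^{(a)}, \dots, f_n^{(a)}$ and $g_1^{(a)}, \dots, g_{n'}^{(a)}$ with the prescribed leading exponents $n + \nu_i^{(a)} - i$ and $n' + \eta_j^{(a)} - j$, with analogous data at $\infty$.

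The key computational input is the multiplicativity of indicial polynomials at each $z_a$: since $D_n = D_{n|n'} D_{n'}$ holds in $\Psi\mathcal{D}(R(u))$, a direct calculation with leading terms gives
\begin{equation*}
I^{(a)}_{D_n}(\beta) \,=\, I^{(a)}_{D_{n|n'}}(\beta - n') \cdot I^{(a)}_{D_{n'}}(\beta),
\end{equation*}
where $I^{(a)}_{D}$ denotes the indicial polynomial (or series) of $D$ at $z_a$. A combinatorial verification using the definition of the hook partition $\lambda^{(a)}$ and the conjugate ${}^c\eta^{(a)}$ then matches the zeros of the rational function $I^{(a)}_{D_n}(\beta)/I^{(a)}_{D_{n'}}(\beta)$, shifted by $-n'$, with the exponents $(n - n') + \lambda_i^{(a)} - i$ expected for a monodromy-free pseudo-differential operator of order $n - n'$ with data $\bar\lambda$.

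With this in hand, the four families of generators \eqref{gen2.1}--\eqref{gen2.4} of $\underline{J}_{n-n'}(\bar\lambda,\bar\emptyset)$ from Proposition \ref{prop 4.3} are verified in turn. The pole-order relations \eqref{gen2.1} follow from the additivity $l_\lambda^{(a)} = l_\nu^{(a)} + l_\eta^{(a)}$ and the explicit Laurent expansion of $D_n D_{n'}^{-1}$ at $z_a$. The indicial vanishings \eqref{gen2.2} are an immediate consequence of the factorisation above. The determinantal conditions $\det \bar M_{ij}^{(a)} = 0$ are, by Proposition \ref{prop1}, the existence-of-formal-solutions conditions for $D_{n|n'}$ with the hook leading orders; these solutions would be exhibited in two families: the $n$ series $D_{n'}(f_i^{(a)})$, which produce the top-row exponents of $\lambda^{(a)}$, and an additional $n'$ series obtained by formally inverting $D_n$ on the $g_j^{(a)}$, which match the column exponents dictated by ${}^c\eta^{(a)}$. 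The regularity relations \eqref{gen2.4} at $\infty$ are handled in parallel using the identity $L_{\infty}(D_n D_{n'}^{-1}) = L_{\infty}(D_n) L_{\infty}(D_{n'})^{-1}$.

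The main obstacle is the rigorous construction of the column family of formal solutions of $D_{n|n'}$, whose leading orders are encoded by the conjugate partition ${}^c\eta^{(a)}$ and hence are not directly obtained by applying $D_{n'}$ to a solution of $D_n$. One needs to invert $D_n$ formally on each $g_j^{(a)}$, selecting a preimage $\psi_j^{(a)} \in R((u - z_a))$ whose leading order, after division by $D_{n'}$, yields the correct column exponent; the indicial factorisation above guarantees that this is possible, but care is required when the two indicial polynomials have coinciding roots. Ensuring that the two complementary families jointly fulfil the full determinantal system of Proposition \ref{prop1} for $D_{n|n'}$, with consistent bookkeeping between the local expansions at the different $z_a$ and at $\infty$, is the principal computational task.
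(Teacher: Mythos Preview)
Your overall strategy --- pass to the quotient ring $R$, use Proposition~\ref{prop1} to obtain formal solutions of $\bar D_n$ and $\bar D_{n'}$, then verify the generators of $\underline{J}_{n-n'}(\bar\lambda,\bar\emptyset)$ --- begins exactly as the paper does. The indicial factorisation and the pole--order check are fine. The gap is in your treatment of the determinantal conditions. You write that ``by Proposition~\ref{prop1}, [these] are the existence-of-formal-solutions conditions for $D_{n|n'}$'', but Proposition~\ref{prop1} is stated and proved only for honest differential operators of the form~\eqref{D}, not for pseudo-differential operators. The no-monodromy conditions for pseudo-differential operators are \emph{defined} in Section~\ref{7} via the matrices $R_{ij}^{\mathrm{red}}$ (formulas~\eqref{rho k}--\eqref{rho ks}), and nowhere is it shown that $\det R_{ij}^{\mathrm{red}}=0$ is equivalent to the existence of formal series annihilated by the operator. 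Indeed, for a genuine pseudo-differential operator the action on a single formal series $x^m+\dots$ is not even well-defined termwise, so your proposed construction of solutions $D_{n'}(f_i^{(a)})$ and $D_n^{-1}(g_j^{(a)})$ does not directly certify the required determinantal vanishing.

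The paper circumvents this entirely. Rather than attempting to characterise the pseudo-differential no-monodromy conditions through solutions, it uses Lemma~\ref{l1} to factor both $L_{z_a}(\bar D_n)$ and $L_{z_a}(\bar D_{n'})$ into first-order pieces, writing $L_{z_a}(\bar D_n\bar D_{n'}^{-1})$ as a product of $n$ factors $(\partial_x-\nu_i^{(a)}/x+\dots)$ and $n'$ factors $(\partial_x+\eta_j^{(a)}/x+\dots)^{-1}$. The technical heart of the argument is then Lemmas~\ref{l2} and~\ref{l3}, which show by direct computation on the matrices $R_{ij}^{\mathrm{red}}$ that left-multiplying by such a first-order factor takes $\Psi\mathcal{D}(\mathcal{A}((x)),\lambda)$ to $\Psi\mathcal{D}(\mathcal{A}((x)),\lambda^{r(s)})$ or $\Psi\mathcal{D}(\mathcal{A}((x)),\lambda^{c(q)})$. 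Starting from $1\in\Psi\mathcal{D}(\mathcal{A}((x)),\emptyset)$ and applying these lemmas $n'+n$ times builds the hook partition $\lambda^{(a)}$ row by row and column by column. This inductive verification of the determinantal identities is exactly what replaces the missing pseudo-differential analogue of Proposition~\ref{prop1}; without it (or an independent proof of such an analogue), your argument does not close.
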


\subsection{} 
To prove Theorem \ref{main2}, we will first need to prove a few lemmas. Again, let $\mathcal{A}$ be an arbitrary associative commutative unital algebra. For this section, let $\partial$ denote the derivation $\partial_{x}$ on $\mathcal{A}((x))$.

\begin{lem}\label{l1}
let $D$ be the differential operator given by the formula \eqref{D}. Suppose that the differential equation $Df=0$ has solutions $f_{!}\lc f_{n}\in\mathcal{A}((x))$ of the form \eqref{solution} with some fixed integers $m_{1}<m_{2}<\dots m_{n}$. Denote $\nu_{i}=m_{n+1-i}-n+i$. Then $D=D_{1}D_{2}\dots D_{n}$, where 
\[ D_{i}=\partial-\frac{\nu_{i}}{x}+d_{0}^{(i)}+d_{1}^{(i)}x+d_{2}^{(i)}x^{2}+\dots , \quad i=1\lc n,\]
for some $d^{(i)}_{j}\in\mathcal{A}$.
\end{lem}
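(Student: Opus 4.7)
The plan is to prove the lemma by induction on $n$, peeling off a factor of order $1$ on the right that corresponds to the solution $f_1$ of smallest leading exponent $m_1$; throughout, I assume $D$ is monic in $\partial$ (i.e.\ $b_{00}=1$), as is required by the form of the desired factorization. For the base case $n = 1$, the equation $Df_1 = 0$ with $f_1 = x^{m_1}(1 + a_1^{(1)} x + \dots)$ a unit in $\mathcal{A}((x))$ forces $D = \partial - f_1'/f_1$; expanding the logarithmic derivative gives $D = \partial - m_1/x + d_0 + d_1 x + \dots$, of the required form with $\nu_1 = m_1$.

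For the inductive step, I would set $D_n := \partial - f_1'/f_1$. Since $f_1$ is a unit in $\mathcal{A}((x))$, one checks that $f_1'/f_1 \in m_1 x^{-1} + \mathcal{A}[[x]]$, so $D_n$ has the required form with $\nu_n = m_1 - n + n = m_1$. Applying the right Euclidean algorithm to divide $D$ by $D_n$ (which is monic in $\partial$) yields $D = \tilde D\, D_n + R$ with $R \in \mathcal{A}((x))$; evaluating on $f_1$ gives $Rf_1 = Df_1 - \tilde D(D_n f_1) = 0$, and since $f_1$ is a unit $R = 0$. Hence $D = \tilde D\, D_n$ for a monic operator $\tilde D$ of order $n - 1$.

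Next I would define $\tilde f_i := D_n f_i = f_i' - (f_1'/f_1)\, f_i$ for $i = 2, \dots, n$; these satisfy $\tilde D\, \tilde f_i = 0$. A short computation of leading terms gives $\tilde f_i = (m_i - m_1)\, x^{m_i - 1} + (\text{higher order})$, and rescaling by the nonzero integer $m_i - m_1$ produces $n - 1$ solutions of $\tilde D$ of the correct form, with strictly increasing integer exponents $\tilde m_{i-1} := m_i - 1$ for $i = 2, \dots, n$. The inductive hypothesis applied to $\tilde D$ yields $\tilde D = D_1 \cdots D_{n-1}$, where the $i$-th factor has the required form with parameter $\tilde m_{(n-1)+1-i} - (n-1) + i = m_{n+1-i} - n + i$, matching $\nu_i$. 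Concatenating with $D_n$ completes the induction.

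The main obstacle I anticipate is purely the bookkeeping of how the exponents shift when passing from $D$ to $\tilde D$, and verifying that the indexing convention $\nu_i = m_{n+1-i} - n + i$ is preserved through the inductive step. A secondary technical point is that the right Euclidean division of $D$ by $D_n$ be well-defined in the ring of differential operators over the possibly non-domain algebra $\mathcal{A}((x))$: this works because $D_n$ is monic in $\partial$ (so the division algorithm terminates purely by degree reduction in $\partial$), and $f_1$ being a unit in $\mathcal{A}((x))$ ensures that $Rf_1 = 0$ forces $R = 0$ without any zero-divisor issues.
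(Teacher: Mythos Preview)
Your proof is correct. The paper takes a closely related but more closed-form route: it introduces the Wronskians $W(f_1,\ldots,f_i)$, checks directly that each has an invertible leading coefficient (a Vandermonde-type product of the $m_k-m_j$), sets $g_i=W(f_1,\ldots,f_i)/W(f_1,\ldots,f_{i-1})$, and then cites \cite[Proposition~6.2]{TU1} for the identity $D=(\partial-g_n'/g_n)\cdots(\partial-g_1'/g_1)$. Your induction is precisely the standard way of \emph{proving} that Wronskian factorization: peeling off $\partial-f_1'/f_1$ on the right and observing that $D_n f_i=W(f_1,f_i)/f_1$ is the new system of solutions for $\tilde D$. So the two arguments are essentially the same; yours is self-contained (no external citation), while the paper's packages the whole recursion into one Wronskian formula. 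One small point you left implicit: for the inductive hypothesis as literally stated (``$D$ of the form \eqref{D}'') you would also want to note that $\tilde D$ again has regular-singular coefficients, which follows by a short recursion on the coefficients of $\tilde D D_n=D$; alternatively, strengthen the inductive statement to any monic operator over $\mathcal{A}((x))$ with the prescribed solutions, which is all your argument actually uses.
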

\begin{proof}
For any $h_{1}\lc h_{k}\in\mathcal{A}((x))$, introduce the Wronskian $W(h_{1}\lc h_{k})\in\mathcal{A}((x))$: 
\[W(h_{1}\lc h_{k}) = \det (\partial_{x}^{j-1}h_{i})_{i,j=1}^{n}.\]
It is straightforward to check that 
\begin{equation}\label{Wro}
W(f_{1}\lc f_{i})=w_{0}x^{q}+w_{1}x^{q+1}+\dots ,
\end{equation}
where $q=\sum_{n+1-i}^{n}\nu_{j}$ and $w_{0}\neq 0$. In particular, $W(f_{1}\lc f_{i})\neq 0$ in $\mathcal{A}((x))$ for all $i=1\lc n$. 

Denote 
\[g_{i}=\frac{W(f_{1}\lc f_{i})}{W(f_{1}\lc f_{i-1})}.\]
Then, following the proof of the Proposition 6.2 in \cite{TU1}, one can check that
\[D=\left(\partial-\frac{g'_{n}}{g_{n}}\right)\left(\partial-\frac{g'_{n-1}}{g_{n-1}}\right)\left(\partial-\frac{g'_{1}}{g_{1}}\right), \]
where $g'_{i}=\partial g_{i}$.

Formula \eqref{Wro} implies
\[\frac{g'_{i}}{g_{i}}=\frac{\nu_{n+1-i}}{x}+c_{0}^{(i)}+c_{1}^{(i)}x+\dots , \quad i=1\lc n,  \]
for some $c^{(i)}_{j}\in\mathcal{A}$, so, the lemma is proved. 
\end{proof}

Fix a partition $\lambda=(\lambda_{1},\lambda_{2}\lc \lambda_{l},0,\dots)$, where $\lambda_{l}\neq 0$. We are going to give a definition of a pseudo-differential operator with residue $\lambda$ as well as a pseudo-differential operator without monodromy. These definitions are based on the construction of the ideal $\underline{J}_{t}(\bar{\lambda},\emptyset)$ introduced in Section \ref{6}.

For a pseudo-differential operator $D\in\Psi\mathcal{D}\bigl(\mathcal{A}((x))\bigr)$ of the form
\begin{equation}\label{psD}
D=\partial^{t}+\sum_{i=1}^{\infty}\left(\sum_{j=-i}^{\infty}\beta_{ij}x^{j}\right)\partial^{t-i}, 
\end{equation}
introduce the following polynomials in a variable $\alpha$:
\begin{equation}\label{rho k}
\rho_{k}(\alpha)=\sum_{i=0}^{l+k}\beta_{i,-i+k}(\alpha+l+k)^{\underline{l+k-i}}.
\end{equation}
We will say that $D$ has residue $\lambda$ if 
\begin{align*}
& \beta_{0j}=0,\quad  -i\leq j\leq -l-1, \quad\text{and} \\
& \rho_{0}(\lambda_{i}-i)=0,\quad i=1\lc l.
\end{align*}

Suppose that $D$ has residue $\lambda$, and fix $i,j\in\Z_{\geq 1}$, $i<j$. Denote $K=\lambda_{i}-\lambda_{j}+j-i$. Consider a matrix
\begin{equation}\label{R_ij}
R_{ij}=
\begin{tikzpicture}[baseline=(current bounding box.center)]
\matrix (m) [matrix of math nodes,nodes in empty cells,right delimiter={)},left delimiter={(} ]
{
\rho_{1,1} & \rho_{2,1} & & & \rho_{K,1}\\
\rho_{0,2} & \rho_{1,2} & & & \\
0 & \rho_{0,3} & & &  \\
&  & & & \rho_{2,K-1} \\
0 & & 0\,\,\, & \rho_{0,K} & \rho_{1,K}\\
} ;
\draw[loosely dotted, thick] (m-2-2)-- (m-5-5);
\draw[loosely dotted, thick] (m-3-2)-- (m-5-4);
\draw[loosely dotted, thick] (m-1-2)-- (m-4-5);
\draw[loosely dotted, thick] (m-3-1)-- (m-5-1);
\draw[loosely dotted, thick] (m-5-1)-- (m-5-3);
\draw[loosely dotted, thick] (m-1-2)-- (m-1-5);
\draw[loosely dotted, thick] (m-1-5)-- (m-4-5);
\draw[loosely dotted, thick] (m-3-1)-- (m-5-3);
\end{tikzpicture},
\end{equation}
where $\rho_{k,s}$ are defined as follows: for each $s\in\Z_{\geq 1}$, set $\alpha_{s}=\lambda_{j}-j+s-1$; then 
\begin{equation}\label{rho ks}
\rho_{k,s}=(\alpha_{s}+l+K-s+1)^{\underline{K-s+1}}\rho_{k}(\alpha_{s}).
\end{equation}

For each $s=1\lc K-1$, if $\alpha_{s+1}=\lambda_{d}-d$ for some $d\in\Z_{\geq 1}$, cross out the $s$-th column of $R_{ij}$ and the $(s+1)$-st row of $R_{ij}$, and denote the resulting matrix as $R_{ij}^{red}$. 

We will say that the pseudo-differential operator $D$ has no monodromy if, additionally to having residue $\lambda$, it satisfies the following conditions: 
\begin{align*}
& \beta_{ij}=0,\quad i> 1,\,\, -i\leq j\leq -l-1, \quad\text{and} \\
& \det R_{ij}^{red} =0, \quad i=1\lc l,\,\, j>i.
\end{align*}

Let $\Psi\mathcal{D}(\mathcal{A}((x)),\lambda )$ denote the subset of all pseudo-differential operators of the form \eqref{psD} with residue $\lambda$ and with no monodromy.

For a partition $\lambda=(\lambda_{1}\lc \lambda_{l},0,\dots)$, where $\lambda_{l}\neq 0$, and natural numbers $s$, $q$ such that $s\geq\lambda_{1}$ and $q\geq l$, define partitions $\lambda^{r(s)}$ and $\lambda^{c(q)}$ as follows. The Young diagram of the partition $\lambda^{r(s)}$ is obtained from the Young diagram of the partition $\lambda$ by attaching a row of length $s$ to the top side of the diagram. The Young diagram of the partition $\lambda^{c(q)}$ is obtained from the Young diagram of partition $\lambda$ by attaching a column of length $q$ to the left side of the diagram, see the pictures below.
\begin{align*}
& \begin{tikzpicture}[baseline=-1.8cm]
 \draw (0,-1) -- (2,-1);
 \draw (2,-1) -- (2,-1.4);
 \draw (2,-1.4) -- (1.2,-1.4);
 \draw (1.2,-1.4) -- (1.2,-1.8);
 \draw (1.2,-1.8) -- (0.8,-1.8);
 \draw (0.8,-1.8) -- (0.8,-2.6);
 \draw (0.8,-2.6) -- (0.4,-2.6);
 \draw (0.4,-2.6) -- (0.4,-3);
 \draw (0.4,-3) -- (0,-3);
 \draw (0,-1) -- (0,-3);
 \node[text width=4pt] at (1,-3.4) 
    {$\lambda$};
\end{tikzpicture}
\quad\quad\longrightarrow\quad\quad 
\begin{tikzpicture}[baseline=-1.8cm]
 \draw (0,-1) -- (2,-1);
 \draw (2,-1) -- (2,-1.4);
 \draw (2,-1.4) -- (1.2,-1.4);
 \draw (1.2,-1.4) -- (1.2,-1.8);
 \draw (1.2,-1.8) -- (0.8,-1.8);
 \draw (0.8,-1.8) -- (0.8,-2.6);
 \draw (0.8,-2.6) -- (0.4,-2.6);
 \draw (0.4,-2.6) -- (0.4,-3);
 \draw (0.4,-3) -- (0,-3);
 \draw (0,-1) -- (0,-3);
 \draw[black] (0,-0.6) rectangle ++(2.4,-0.4);
 \draw [decorate,
    decoration = {brace, amplitude=6pt}] (0,-0.5) --  (2.4,-0.5) node[pos=0.5,above=5pt]{$s$};
 \node[text width=4pt] at (1,-3.4) 
    {$\lambda^{r(s)}$};   
\end{tikzpicture} \\
 & {} \\
& \begin{tikzpicture}[baseline=-1.8cm]
 \draw (0,-1) -- (2,-1);
 \draw (2,-1) -- (2,-1.4);
 \draw (2,-1.4) -- (1.2,-1.4);
 \draw (1.2,-1.4) -- (1.2,-1.8);
 \draw (1.2,-1.8) -- (0.8,-1.8);
 \draw (0.8,-1.8) -- (0.8,-2.6);
 \draw (0.8,-2.6) -- (0.4,-2.6);
 \draw (0.4,-2.6) -- (0.4,-3);
 \draw (0.4,-3) -- (0,-3);
 \draw (0,-1) -- (0,-3);
 \node[text width=4pt] at (1,-3.4) 
    {$\lambda$};
\end{tikzpicture}
\quad\quad\longrightarrow\quad 
\begin{tikzpicture}[baseline=-1.8cm]
 \draw (0,-1) -- (2,-1);
 \draw (2,-1) -- (2,-1.4);
 \draw (2,-1.4) -- (1.2,-1.4);
 \draw (1.2,-1.4) -- (1.2,-1.8);
 \draw (1.2,-1.8) -- (0.8,-1.8);
 \draw (0.8,-1.8) -- (0.8,-2.6);
 \draw (0.8,-2.6) -- (0.4,-2.6);
 \draw (0.4,-2.6) -- (0.4,-3);
 \draw (0.4,-3) -- (0,-3);
 \draw (0,-1) -- (0,-3);
 \draw[black] (-0.4,-1) rectangle ++(0.4,-2.4);
 \draw [decorate,
    decoration = {brace,mirror,amplitude=6pt}] (-0.5,-1) --  (-0.5,-3.4) node[pos=0.5,left=5pt]{$q$};
 \node[text width=4pt] at (0.8,-3.6) 
    {$\lambda^{c(q)}$};   
\end{tikzpicture}
\end{align*}
\begin{lem}\label{l2}
Fix a partition $\lambda=(\lambda_{1},\lambda_{2},\dots)$ and a natural number $s$ such that $s\geq\lambda_{1}$. Let $D$ be a pseudo-differential operator from $\Psi\mathcal{D}(\mathcal{A}((x)),\lambda)$. Consider a pseudo-differential operator
\[\tilde{D}=\left(\partial-\frac{s}{x}+\sum_{i=0}^{\infty}a_{i}x^{i}\right) D\]
with some $a_{i}\in\mathcal{A}$. Then $\tilde{D}\in \Psi\mathcal{D}(\mathcal{A}((x)),\lambda^{r(s)})$.
\end{lem}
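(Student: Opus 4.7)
The proof verifies the three defining conditions of $\Psi\mathcal{D}(\mathcal{A}((x)),\lambda^{r(s)})$ for $\tilde D$: (i) the pole-order bound $\tilde\beta_{ij}=0$ for $i>1,\,-i\le j\le -l-2$; (ii) the residue condition $\tilde\rho_0(\lambda^{r(s)}_k-k)=0$ for $k=1,\dots,l+1$; and (iii) the no-monodromy relations $\det\tilde R_{ij}^{\mathrm{red}}=0$. Writing $D=\sum_{i\ge 0}B_i\partial^{t-i}$ with $B_i=\sum_j\beta_{ij}x^j$, $B_0=1$, and $D_1=\partial-s/x+A$ with $A=\sum_{k\ge 0}a_kx^k$, the product is $\tilde D=\sum_{i\ge 0}\tilde B_i\partial^{t+1-i}$ with $\tilde B_0=1$, $\tilde B_1=B_1-s/x+A$, and $\tilde B_i=B_i+(\partial-s/x+A)B_{i-1}$ for $i\ge 2$. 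Condition (i) is then immediate, since $A$ has no negative powers and $\partial-s/x$ shifts pole orders by exactly one.

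The central computation is the identity
\begin{equation*}
\tilde\rho_k(\alpha)\,=\,(\alpha+k+1-s)\,\rho_k(\alpha+1)\,+\,\sum_{r=0}^{k-1}a_{k-1-r}\,(\alpha+l+k+1)^{\underline{k-r}}\,\rho_r(\alpha+1),
\end{equation*}
valid for every $k\ge 0$. This is established by direct expansion of $\tilde\rho_k$ using the formulas for $\tilde B_i$, the falling-factorial identity $(\alpha+l+k+1)^{\underline{l+k+1-i}}=(\alpha+l+k+1)(\alpha+l+k)^{\underline{l+k-i}}$ for $i\le l+k$, and the pole-order hypothesis on $D$ (which annihilates the boundary term at $i=l+k+1$). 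Specialized to $k=0$ it reads $\tilde\rho_0(\alpha)=(\alpha+1-s)\,\rho_0(\alpha+1)$, and (ii) follows at once: the zeros $\{\lambda_i-i\}_{i=1}^l$ of $\rho_0$ together with the new zero $s-1$ of $\alpha+1-s$ are precisely $\{\lambda^{r(s)}_k-k\}_{k=1}^{l+1}$.

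The main task is (iii). My plan is to use the displayed identity to express the matrix $\tilde R_{\tilde i,\tilde j}^{\mathrm{red}}$ for $\tilde D$ in terms of $R_{ij}^{\mathrm{red}}$ for $D$ via an invertible transformation. The second term in the identity involves $\rho_r(\alpha+1)$ with $r<k$, multiplied by a polynomial in $\alpha$; substituting into the entries of $\tilde R_{\tilde i,\tilde j}^{\mathrm{red}}$ and performing an $\mathcal{A}$-linear column operation (adding triangular combinations of earlier columns) eliminates these lower-$r$ contributions. What remains is a block decomposition of $\tilde R_{\tilde i,\tilde j}^{\mathrm{red}}$ whose nontrivial block is a scalar multiple of $R_{ij}^{\mathrm{red}}$ (the scalar being a product of nonvanishing falling factorials evaluated at the shared exponents), with the block corresponding to the new exponent $s-1$ trivialized by the factor $(\alpha+1-s)$ vanishing there. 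Thus $\det\tilde R_{\tilde i,\tilde j}^{\mathrm{red}}$ is a product of $\det R_{ij}^{\mathrm{red}}$ with an invertible factor, so the hypothesis $\det R_{ij}^{\mathrm{red}}=0$ forces $\det\tilde R_{\tilde i,\tilde j}^{\mathrm{red}}=0$.

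The hard part is the combinatorial bookkeeping in this matching: the crossed-out rows and columns in the two reduced matrices are determined by which new (resp.\ old) $\alpha_s$ coincide with entries of the exponent set for $\lambda^{r(s)}$ (resp.\ $\lambda$), and these patterns are related but shifted by one. If this direct combinatorial approach proves cumbersome, a cleaner alternative is a specialization argument: both hypothesis and conclusion are polynomial identities in $\beta_{ij},a_k,s,t$, so by the interpolation framework of Section~\ref{6} (especially Lemma~\ref{tool}) it suffices to verify (iii) for $t=n$ a sufficiently large integer, in which case $D$ is a Fuchsian differential operator of order $n$, Proposition~\ref{prop1} supplies $n$ Frobenius solutions $f_1,\dots,f_n$ of $D$ (hence of $\tilde D$), and an additional solution $\tilde f=x^{n+s}(1+O(x))$ of $\tilde D$ is obtained by solving $D\tilde f=g$ where $g$ is the Frobenius solution of $D_1g=0$ with leading term $x^s$. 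Then the converse direction of Proposition~\ref{prop1} applied to $\tilde D$ yields (iii).
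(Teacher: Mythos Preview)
Your primary approach is exactly the paper's. The identity
\[
\tilde\rho_k(\alpha)=(\alpha+k+1-s)\,\rho_k(\alpha+1)+\sum_{k'=1}^{k}a_{k-k'}\,(\alpha+l+k+1)^{\underline{k-k'+1}}\,\rho_{k'-1}(\alpha+1)
\]
is the paper's formula \eqref{l2p4}, and the plan for (iii)---express the columns $\tilde c_k$ of $\tilde R_{ij}$ as upper-triangular $\mathcal{A}$-combinations of the columns $c_k$ of $R_{i-1,j-1}$, then read off the determinant---is precisely what the paper carries out, arriving at $\det\tilde R_{ij}^{\mathrm{red}}=\bigl(\prod_{k}'(\tilde\lambda_j-j+1+k-s)\bigr)\det R_{i-1,j-1}^{\mathrm{red}}$. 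The one piece of ``bookkeeping'' you wave at but do not spell out, and which the paper does make explicit, is how the \emph{reduced} columns interact with this triangular transformation: a reduced column $c_k$ corresponds to an intermediate index $j'$ with $i-1<j'<j-1$, and the \emph{hypothesis} $\det R_{i-1,j'}^{\mathrm{red}}=0$ (not just the final $\det R_{i-1,j-1}^{\mathrm{red}}=0$) shows that such a $c_k$ is already, modulo reduced rows, a combination of earlier columns. That is the mechanism making the reduction pattern compatible with the column operation; your ``block decomposition'' phrasing obscures this.

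Your proposed alternative via specialization is not valid as written. The parameter $t$ does not appear in $\rho_k$, in $R_{ij}$, or in any of the residue or no-monodromy conditions---they depend only on the coefficients $\beta_{ij}$---so ``setting $t=n$'' changes nothing and certainly does not turn a general $D\in\Psi\mathcal{D}(\mathcal{A}((x)),\lambda)$ into a differential operator: such $D$ has, in general, infinitely many nonzero $\beta_i$. Hence Proposition~\ref{prop1} and the Frobenius-solution argument are unavailable. The machinery of Section~\ref{6} (Lemma~\ref{tool} in particular) is specific to the universal operator $D^{\C[y]}$ over $\mathcal{A}_\infty[w]$ and the finite-generation statement proved there; it is not a general principle allowing one to reduce polynomial identities about arbitrary pseudo-differential operators to the differential case. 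Stick with the direct computation.
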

\begin{proof}
Let $\beta_{i}(x)$, $i\in\Z_{\geq 0}$, be the coefficients of $D$:
\[D=\sum_{i=0}^{\infty} \beta_{i}(x)\partial^{t-i}.\]
We have $\beta_{0}(x)=1$, and $\beta_{i}(x)=\sum_{j=-i}^{\infty}\beta_{ij}x^{j}$ for some $\beta_{ij}\in\mathcal{A}$. We will  also put $\beta_{-1}(x)=0$.

Let $\tilde{\beta}_{i}(x)$, $i\in\Z_{\geq 0}$, be the coefficients of $\tilde{D}$:
\[\tilde{D}=\sum_{i=0}^{\infty} \tilde{\beta}_{i}(x)\partial^{t+1-i}.\]
Then 
\[\tilde{\beta}_{i}(x)=\beta_{i}(x)+\left(\partial-\frac{s}{x}+\sum_{i=0}^{\infty}a_{i}x^{i}\right)\beta_{i-1}(x),\]
in particular, $\tilde{\beta}_{i}(x)=\sum_{j=-i}^{\infty}\tilde{\beta}_{ij}x^{j}$ for some $\tilde{\beta}_{ij}\in\mathcal{A}$, and for each $j, k\in\Z_{\geq 0}$,
\begin{equation}\label{l2p1}
\tilde{\beta}_{j,-j+k}=\beta_{j,-j+k} -\beta_{j-1,-j+1+k}(s+j-k-1)
 +\sum_{k'=1}^{k}\beta_{j-1,-j+k'}a_{k-k'}.
\end{equation}
Let $l$ be the number of non-zero elements in the partition $\lambda$. 
Since $D\in\Psi\mathcal{D}(\mathcal{A}((x)),\lambda)$, we have 
\begin{equation}\label{l2p2}
\beta_{ij}=0,\quad  -i\leq j<-l.
\end{equation}
Together with \eqref{l2p1}, this implies 
\begin{equation}\label{l2p3}
\tilde{\beta}_{ij}=0,\quad  -i\leq j<-l-1.
\end{equation}
Let $\rho_{k}(\alpha)$, $\rho_{k,s}$, $R_{ij}$ ($R_{ij}^{red}$) be the objects introduced in formulas \eqref{rho k}, \eqref{rho ks}, \eqref{R_ij}, respectively, corresponding to the differential operator $D$ and partition $\lambda$. We denote by $\tilde{\rho}_{k}(\alpha)$, $\tilde{\rho}_{k,s}$, $\tilde{R}_{ij}$ ($\tilde{R}_{ij}^{red}$) the similar objects for the pseudo-differential operator $\tilde{D}$ and partition $\lambda^{r(s)}$. 

Using \eqref{l2p2} and \eqref{l2p3}, we can write
\begin{equation}\label{l2p4}
    \begin{split}
        \tilde{\rho}_{k}(\alpha) & =\sum_{j=0}^{k+l+1}\tilde{\beta}_{j,-j+k}(\alpha+l+1+k)^{\underline{l+1+k-j}} = \\
        & =\sum_{j=0}^{k+l}\beta_{j.-j+k}(\alpha+l+1+k)^{\underline{l+1+k-j}} - \sum_{j=0}^{k+l}\beta_{j.-j+k}(s+j-k)(\alpha+l+1+k)^{\underline{l+k-j}} + \\
        & + \sum_{j=0}^{k+l}\sum_{k'=1}^{k}a_{k-k'}\beta_{j,-j+k'-1}(\alpha+l+1+k)^{\underline{l+k-j}} = \\
        & = \rho_{k}(\alpha+1)(\alpha+1+k-s)+\sum_{k'=1}^{k}a_{k-k'}(\alpha+1+k+l)^{\underline{k-k'+1}}\rho_{k'-1}(\alpha+1).
    \end{split}
\end{equation}
In particular, for $k=0$, we have 
\begin{equation}\label{l2p5}
\tilde{\rho}_{0}(\alpha)=\rho_{0}(\alpha+1)(\alpha+1-s)
\end{equation}
Since $\rho_{0}(\lambda_{i}-i)=0$, $i=1\lc l$, formula \eqref{l2p5} gives $\tilde{\rho}_{0}(\tilde{\lambda}_{i}-i)=0$, $i=1\lc l+1$, where
\[ \tilde{\lambda}_{i}=
\begin{cases}
    s, \quad i=1, \\
    \lambda_{i-1}, \quad i>1. 
\end{cases}
\]
Therefore, $\tilde{D}$ has residue $\lambda^{r(s)}=(\tilde{\lambda}_{1}\lc\tilde{\lambda}_{l+1},0,\dots )$. 

Put $\lambda_{0}=s$, and for each $j\in\Z_{\geq 1}$, define $R_{0j}$ in the same way we defined $R_{ij}$ for positive $i$.

Fix some $i,j\in\Z_{\geq 1}$, $i<j$, $1\leq i\leq l$, and for each $k=1\lc K$, denote by $\tilde{c}_{k}$, (resp., $c_{k}$) the $k-th$ column in the matrix $\tilde{R}_{ij}$ (resp., $R_{i-1,j-1}$). Then the expression that we obtained in \eqref{l2p4} for $\tilde{\rho}_{k}(\alpha)$ gives 
\begin{equation}\label{l2p6}
    \tilde{c}_{k}=c_{k}(\tilde{\lambda}_{j}-j+1+k-s)+\sum_{k'=1}^{k-1}c_{k-k'}a_{k'-1}.
\end{equation}
 
For each $i'$, $j'$, let us call a column or a row of the matrix $R_{i'j'}$ (resp., $\tilde{R}_{i'j'}$) reduced if it is crossed out when we obtain the matrix $R^{red}_{i'j'}$ (resp., $\tilde{R}^{red}_{i'j'}$). We claim that \eqref{l2p6} implies 
\begin{equation}\label{l2p7}
    \det \bigl(\tilde{R}^{red}_{ij}\bigr) = \dashprod_{k=1}^{K}(\tilde{\lambda}_{j}-j+1+k-s)\det \bigl(R^{red}_{i-1,j-1}\bigr),
\end{equation}
where $K=\tilde{\lambda}_{i}-\tilde{\lambda}_{j}+j-i$, and ${\scalebox{1.2}{$\Pi$}'}_{k=1}^{K}$ means the product over $k$ such that the column $c_{k}$ is not reduced.

Indeed, if $j=i+1$, then $R_{i-1,j-1}=R^{red}_{i-1,j-1}$, $\tilde{R}_{ij}=\tilde{R}^{red}_{ij}$, and \eqref{l2p6} immediately gives \eqref{l2p7}. If $j>i+1$, we should be more accurate since $R_{i-1,j-1}$ and $\tilde{R}_{ij}$ contain some reduced rows. It is easy to check that the numbers of the reduced rows and columns in the matrices $R_{i-1,j-1}$ and $\tilde{R}_{ij}$ coincide. A column $c_{k}$ of the matrix $R_{ij}$ is reduced if and only if $k=\lambda_{j'}-\lambda_{j-1}+j-1-j'$ for some $j'$ such that $i-1<j'<j-1$. But since $\det R^{red}_{i-1,j'}=0$, this column is a linear combination of $c_{k'}$ with $k'<k$ (up to elements of reduced rows of $R_{i-1,j'}$, which are also elements of reduced rows of $R_{i-1,j-1}$). Therefore, in the case $j>i+1$, \eqref{l2p6} implies \eqref{l2p7} as well.

If $i\geq 1$, then $\det \bigl(R^{red}_{i-1,j-1}\bigr)=0$, and \eqref{l2p7} implies $\det \bigl(\tilde{R}^{red}_{ij}\bigr)=0$. 

If $i=1$, consider the factor $\tilde{\lambda}_{j}-j+1+K-s$ in the right hand side of \eqref{l2p7}. Since for $i=1$, we have $K=\tilde{\lambda}_{1}-\tilde{\lambda}_{j}+j-1$, and $\tilde{\lambda}_{1}=s$, this factor equals zero, and we have $\det \bigl(\tilde{R}^{red}_{1j}\bigr)=0$.

Therefore, $\tilde{D}$ has no monodromy and the lemma is proved.
\end{proof}

\begin{lem}\label{l3}
Fix a partition $\lambda = (\lambda_{1}\lc\lambda_{l},0,\dots)$, $\lambda_{l}\neq 0$, and a natural number $q$ such that $q\geq l$. Let $D$ be a pseudo-differential operator from $\Psi\mathcal{D}(\mathcal{A}((x)), \lambda )$. Consider a pseudo-differential operator
\[\tilde{D}=\left(\partial + \frac{q}{x}+\sum_{i=1}^{\infty}a_{i}x^{i}\right)^{-1} D\]
with some $a_{i}\in\mathcal{A}$. Then $\tilde{D}\in\Psi\mathcal{D}(\mathcal{A}((x)), \lambda^{c(q)} )$.
\end{lem}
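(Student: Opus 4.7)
The plan is to mirror the proof of Lemma \ref{l2}, working with the factorisation $D = E\tilde{D}$ for $E = \partial + q/x + \sum_{i \geq 1} a_i x^i$. Writing $D = \sum_{i \geq 0}\beta_i(x)\partial^{t-i}$ and $\tilde{D} = \sum_{i \geq 0}\tilde{\beta}_i(x)\partial^{t-1-i}$, expanding $D = E\tilde{D}$ gives
\[
\beta_i(x) = \tilde{\beta}_i(x) + \tilde{\beta}'_{i-1}(x) + \Bigl(\tfrac{q}{x} + \sum_{k \geq 1}a_k x^k\Bigr)\tilde{\beta}_{i-1}(x),
\]
and extracting the coefficient of $x^{-i+k}$ yields
\[
\beta_{i,-i+k} = \tilde{\beta}_{i,-i+k} + (q + k - i + 1)\tilde{\beta}_{i-1,-(i-1)+k} + \sum_{k'=1}^{k-1} a_{k'}\tilde{\beta}_{i-1,-i+k-k'}. \qquad (\ast)
\]
The first step is to establish the pole bound $\tilde{\beta}_{ij}=0$ for $i\geq 2$ and $-i\leq j\leq -q-1$. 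Since the no-monodromy hypothesis on $D$ together with $q\geq l$ gives $\beta_{ij}=0$ in the same range, this follows by an induction on $i$ after solving $(\ast)$ for $\tilde{\beta}_{i,-i+k}$; at the boundary $i=q+1$, $k=0$ the coefficient $q+k-i+1$ vanishes and $(\ast)$ collapses to $\tilde{\beta}_{q+1,-q-1}=\beta_{q+1,-q-1}=0$.

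Next I would compute the indicial polynomial $\tilde{\rho}_0$. Setting $k=0$ in $(\ast)$ yields the first-order linear recurrence $\tilde{\beta}_{i,-i} = \beta_{i,-i} - (q-i+1)\tilde{\beta}_{i-1,-(i-1)}$ with $\tilde{\beta}_{0,0}=1$, solved by $\tilde{\beta}_{i,-i} = \sum_{j=0}^{i}(-1)^{i-j}(q-j)^{\underline{i-j}}\beta_{j,-j}$. Substituting into $\tilde{\rho}_0(\alpha) = \sum_{i=0}^q \tilde{\beta}_{i,-i}(\alpha+q)^{\underline{q-i}}$, interchanging summations, and invoking the Vandermonde--Chu identity
\[
\sum_{m=0}^{N}(-1)^m N^{\underline{m}}(\alpha+q)^{\underline{N-m}} = (\alpha+q-1)^{\underline{N}},
\]
followed by $\beta_{j,-j}=0$ for $j>l$, gives the clean factorisation
\[
\tilde{\rho}_0(\alpha) = (\alpha+q-1)^{\underline{q-l}}\,\rho_0(\alpha-1).
\]
The roots of the first factor are $\{1-i : l<i\leq q\}$ and of the second are $\{\lambda_i-i+1 : 1\leq i\leq l\}$; together these are exactly $\{\lambda^{c(q)}_i - i : 1\leq i\leq q\}$, so $\tilde{D}$ has residue $\lambda^{c(q)}$.

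For the no-monodromy conditions $\det\tilde{R}^{red}_{ij}=0$ with $1\leq i\leq q$ and $j>i$, an analogous computation for the higher $\tilde{\rho}_k$ produces a formula of the shape
\[
\tilde{\rho}_k(\alpha) = (\alpha+q+k-1)^{\underline{q-l}}\,\rho_k(\alpha-1) + \sum_{k'=1}^{k} a_{k'}\,Q_{k,k'}(\alpha)\,\rho_{k-k'}(\alpha-1),
\]
in which each $Q_{k,k'}(\alpha)$ again carries a factor of the same product-type. Using this, the columns of $\tilde{R}_{ij}$ can be written as explicit linear combinations of the columns of the matrix $R_{i',j'}$ for the appropriate pair $(i',j')$ associated with $D$, mimicking formulas \eqref{l2p6}--\eqref{l2p7}. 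For pairs $(i,j)$ with both $i,j\leq l$ this reduces $\det\tilde{R}^{red}_{ij}=0$ directly to a no-monodromy condition satisfied by $D$. For the remaining pairs, involving the $q-l$ newly appended rows of $\lambda^{c(q)}$ of length $1$ (i.e.\ at least one of $i,j$ in $(l,q]$), the reduction crosses out essentially all but a small block of $\tilde{R}_{ij}$, and the surviving entries vanish because the relevant shift $\alpha_s$ lies in the arithmetic progression of roots of $(\alpha+q+k-1)^{\underline{q-l}}$.

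\textbf{Main obstacle.} The technical core is the final case analysis. One must partition the pairs $(i,j)$ according to whether $i\leq l$ or $i\in(l,q]$, and whether $j\leq l$, $l<j\leq q$, or $j>q$; in each sub-case one has to track exactly which rows and columns of $\tilde{R}_{ij}$ are crossed out in forming $\tilde{R}^{red}_{ij}$, and identify the linear factor that makes the surviving determinant vanish. This bookkeeping is markedly heavier than in Lemma \ref{l2}, where $\lambda$ grows by a single row, whereas here it grows by $q-l$ rows simultaneously; managing the interaction of these new rows with the row/column deletions is where the substantial work lies.
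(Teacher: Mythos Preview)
Your outline matches the paper's proof in all essentials: the recursion $(\ast)$, the inductive pole bound exploiting the vanishing of the coefficient $q+k-i+1$ at the exponent $j=-q-1$, the factorisation $\tilde\rho_0(\alpha)=(\alpha+q-1)^{\underline{q-l}}\rho_0(\alpha-1)$, and the passage to a column/determinant identity between $\tilde R_{ij}$ and $R_{ij}$. The one point where the paper's execution is sharper is that it does \emph{not} invert to express $\tilde\rho_k$ through the $\rho$'s; it keeps the relation in the form
\[
\rho_k(\alpha-1)\,(\alpha+k+q-1)^{\underline{q-l}}
=\tilde\rho_k(\alpha)+\sum_{k'=1}^{k}a_{k-k'}(\alpha+k+q-1)^{\underline{k-k'}}\tilde\rho_{k'-1}(\alpha),
\]
which already has explicit scalar coefficients. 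Passing to columns and then to reduced matrices (by induction on $j$) yields directly that $\det\tilde R^{\,red}_{ij}$ equals $\det R^{\,red}_{ij}$ times a product of factors $(\tilde\lambda_j-j+k+q-1)^{\underline{q-l}}$ over the non-deleted $k$. The case analysis then collapses to two lines, not six: for $1\le i\le l$ one has $\det R^{\,red}_{ij}=0$ by the no-monodromy hypothesis on $D$ (which covers \emph{all} $j>i$, not only $j\le l$); for $l<i\le q$ the factor at $k=K$ computes to $(q-i)^{\underline{q-l}}=0$. So the obstacle you flag is largely illusory.
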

\begin{proof}
Let again $\beta_{i}(x)=\sum_{i=-j}^{\infty}\beta_{ij}x^{j}$ and $\tilde{\beta}_{i}(x)$ be the coefficients of $D$ and $\tilde{D}$, respectively, like in the proof of the previous lemma; the only difference is that now $\tilde{D}$ has order $t-1$, not $t+1$. We will also assume that $\tilde{\beta}_{-1}(x)=0$.

We  have 
\[\beta_{i}(x)=\tilde{\beta}_{i}(x)+\left(\partial + \frac{q}{x}+\sum_{i=1}^{\infty}a_{i}x^{i}\right)\tilde{\beta}_{i-1}(x).\]
In particular, $\tilde{\beta}_{i}(x)=\sum_{i=-j}^{\infty}\tilde{\beta}_{ij}x^{j}$ for some $\tilde{\beta}_{ij}\in\mathcal{A}$, and for each $j,k\in\Z_{\geq 0}$, 
\begin{equation}\label{l3p1}
\beta_{j,-j+k}=\tilde{\beta}_{j,-j+k}+\tilde{\beta}_{j-1,-j+1+k}(q-j+k+1)+\sum_{k'=1}^{k}\tilde{\beta}_{j-1,-j+k'}a_{k-k'},
\end{equation}

Let us prove by induction on $i$ that 
\begin{equation}\label{l3p4}
\tilde{\beta}_{ij}=0\quad\text{for all } i\in\Z_{\geq 0},\, -i\leq j<-q-1.
\end{equation}
For the base $i=0$, there is nothing to check.
Since $D\in\Psi\mathcal{D}(\mathcal{A}((x)),\lambda)$, we have 
\begin{equation}\label{l3p2}
\beta_{ij}=0,\quad  -i\leq j<-l.
\end{equation}
Therefore, for $-i\leq j<-l$, formula \eqref{l3p1} gives
\begin{equation}\label{l3p3}
\tilde{\beta}_{ij}=-\tilde{\beta}_{i-1,j+1}(q+j+1)+\sum_{j'=-i+1}^{j}\tilde{\beta}_{i-1,j'}a_{j-j'}.
\end{equation}
If $j<-q-2$, then by induction assumption $\tilde{\beta}_{i-1,j+1}=0$ and $\tilde{\beta}_{i-1,j'}=0$ for $j'=-i+1\lc j$. Then \eqref{l3p3} gives $\tilde{\beta}_{ij}=0$.

If $j=-q-1$, then the first term in the right hand side of \eqref{l3p3} is zero because of the factor $q+j+1$ and all other terms are zero because of the induction assumption. Therefore, $\tilde{\beta}_{i,-q-1}=0$, and \eqref{l3p4} follows.

Let $\rho_{k}(\alpha)$, $\rho_{k,s}$, $R_{ij}$ ($R_{ij}^{red}$) be the objects introduced in formulas \eqref{rho k}, \eqref{rho ks}, \eqref{R_ij}, respectively, corresponding to the differential operator $D$ and partition $\lambda$. We denote by $\tilde{\rho}_{k}(\alpha)$, $\tilde{\rho}_{k,s}$, $\tilde{R}_{ij}$ ($\tilde{R}_{ij}^{red}$) the similar objects for the pseudo-differential operator $\tilde{D}$ and partition $\lambda^{c(q)}$. 

Using \eqref{l3p1}, \eqref{l3p4}, and \eqref{l3p2}, we can write
\begin{equation*}
\begin{split}
& \rho_{k}(\alpha)(\alpha+k+q+1)^{\underline{q-l+1}} = \sum_{j=0}^{k+q+1}\beta_{j,-j+k}(\alpha+k+q+1)^{\underline{k+q+1-j}} = \\
& = \sum_{j=0}^{k+q}\tilde{\beta}_{j,-j+k}(\alpha+1+k+q)^{\underline{k+q+1-j}} +\sum_{j=0}^{k+q}\tilde{\beta}_{j,-j+k}(\alpha+1+k+q)^{\underline{k+q-j}}(q-j+k) + \\
& +\sum_{j=0}^{k+q}\sum_{k'=1}^{k}\beta_{j,-j+k'-1}(\alpha+1+k+q)^{\underline{k+q-j}}a_{k-k'} = \\
& = (\alpha+1+q+k)\tilde{\rho}_{k}(\alpha+1)+\sum_{k'=1}^{k}a_{k-k'}(\alpha+1+k+q)^{\underline{k-k'+1}}\tilde{\rho}_{k'-1}(\alpha+1).
\end{split}
\end{equation*}
Therefore, we get
\begin{equation}\label{l3p5}
\rho_{k}(\alpha)(\alpha+k+q)^{\underline{q-l}}=\tilde{\rho}_{k}(\alpha+1)+\sum_{k'=1}^{k}a_{k-k'}(\alpha+k+q)^{\underline{k-k'}}\tilde{\rho}_{k'-1}(\alpha+1).
\end{equation}

In particular, for $k=0$, we have 
\begin{equation}\label{l3p6}
    \tilde{\rho}_{0}(\alpha)=\rho_{0}(\alpha-1)\cdot (\alpha+l)(\alpha+l+1)\dots (\alpha+q-1).
\end{equation}
Since $\rho_{0}(\lambda_{i}-i)=0$ for $i=1\lc l$ , and $\lambda_{i}=0$ for $i>l$, formula \eqref{l3p6} gives $\tilde{\rho}_{0}(\tilde{\lambda}_{i}-i)=0$, $i=1\lc q$, where $\tilde{\lambda}_{i}=\lambda_{i}+1$ for each $i=1\lc q$. Therefore, $\tilde{D}$ has residue $\lambda^{c(q)}=(\tilde{\lambda}_{1}\lc\tilde{\lambda}_{q},0,\dots)$.

Fix some $i,j\in\Z_{\geq 1}$, $i<j$, $1\leq i\leq l$, and denote by $\tilde{c}_{k}$ (resp., $c_{k}$) the $k$-th column of the matrix $\tilde{R}_{ij}$ (resp., $R_{ij}$). Then one can check that \eqref{l3p5} implies 
\begin{equation}\label{l3p7}
c_{k}(\tilde{\lambda}_{j}-j+k+q)^{\underline{q-l+1}}=\tilde{c}_{k}(\tilde{\lambda}_{j}-j+k+q)+\sum_{k'=1}^{k}a_{k-k'}\tilde{c}_{k'-1}.
\end{equation}

For any $i'$, $j'$, let us define the reduced rows and columns of a matrix $R_{i'j'}$ or $\tilde{R}_{i'j'}$ in the same way we did it in the proof of the previous lemma. We will prove by induction on $j$ that 
\begin{equation}\label{l3p8}
    \det \bigl(\tilde{R}^{red}_{ij}\bigr) = \dashprod_{k=1}^{K}(\tilde{\lambda}_{j}-j+k+q-1)^{\underline{q-l}}\det \bigl(R^{red}_{i,j}\bigr),
\end{equation}
where $K=\tilde{\lambda}_{i}-\tilde{\lambda}_{j}+j-i$, and ${\scalebox{1.2}{$\Pi$}'}_{k=1}^{K}$ means the product over $k$ such that the column $c_{k}$ is not reduced.

For $j=i+1$, we have $\tilde{R}^{red}_{ij}=\tilde{R}_{ij}$ and $R_{ij}^{red}=R_{ij}$. Then \eqref{l3p7} implies \eqref{l3p8}. 

Consider the case $j>i+1$. A column $\tilde{c}_{k}$ is reduced if and only if $k=\tilde{\lambda}_{j'}-\tilde{\lambda}_{j}+j-j'$ for some $j'$ such that $i<j'<j$. But by induction assumption, we have $\det \tilde{R}^{red}_{ij'} = 0$, therefore, the column $\tilde{c}_{k}$ is a linear combination of columns $\tilde{c}_{k'}$ with $k<k'$ (up to elements of reduced rows of $\tilde{R}_{ij'}$, which are elements of reduced rows of $\tilde{R}_{ij}$). Therefore, again, \eqref{l3p7} implies \eqref{l3p8}.

If $1\leq i\leq l$, then $\det \bigl(R^{red}_{i,j}\bigr)=0$, and \eqref{l3p8} implies $\det \bigl(\tilde{R}^{red}_{i,j}\bigr)=0$.

If $l+1\leq i\leq q$, then consider the factor $(\tilde{\lambda}_{j}-j+K+q-1)^{\underline{q-l}}$ in the right hand side of \eqref{l3p8}. Since $K=\tilde{\lambda}_{i}-\tilde{\lambda}_{j}+j-i$ and $\tilde{\lambda}_{i}=1$, this factor equals $(q-i)^{\underline{q-l}}$, which is zero for $i>l$. Therefore, we have again $\det \bigl(\tilde{R}^{red}_{i,j}\bigr)=0$.

Hence, $\tilde{D}$ has no monodromy, and the lemma is proved. 
\end{proof}

Recall that for $b(u)\in\mathcal{A}(u)$, $L_{z}\bigl[b(u)\bigr](x)\in\mathcal{A}((x))$ denotes the Laurent series of $b(u)$ at $z$, where we take $u-z=x$, in particular $L_{0}\bigl[b(u^{-1})\bigr](x)$ is the Laurent series of $b(u)$ at infinity. We will say that a pseudo-differential operator $D\in\Psi\mathcal{D}(\mathcal{A}(u))$,
\[D=\sum_{i=0}^{\infty}b_{i}(u)\partial_{u}^{t-i},\]
has regular singularity at $\infty$ if for all $i\in\Z_{\geq 0}$,
\[L_{0}\bigl[b_{i}(u^{-1})\bigr](x)=\sum_{k=i}^{\infty}a_{ik}x^{k}\]
for some $a_{ik}\in\mathcal{A}$. Denote the set of all pseudo-differential operators in $\Psi\mathcal{D}(\mathcal{A}(u))$ with regular singularity at $\infty$ by $\Psi\mathcal{D}_{\infty}(\mathcal{A}(u))$.

\begin{lem}\label{l4}
    If $D_{1}\in\Psi\mathcal{D}_{\infty}(\mathcal{A}(u))$ and $D_{2}\in\Psi\mathcal{D}_{\infty}(\mathcal{A}(u))$, then $D_{1}D_{2}\in\Psi\mathcal{D}_{\infty}(\mathcal{A}(u))$.
\end{lem}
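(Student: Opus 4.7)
The approach is a direct calculation from the multiplication rule for pseudo-differential operators, together with two elementary estimates on orders at infinity. I would write $D_1 = \sum_{i \geq 0} b_i(u) \partial_u^{t_1 - i}$ and $D_2 = \sum_{j \geq 0} c_j(u) \partial_u^{t_2 - j}$. The hypothesis $D_1, D_2 \in \Psi\mathcal{D}_\infty(\mathcal{A}(u))$ means that each $b_i(u)$ and $c_j(u)$, as a rational function of $u$, has an expansion at infinity of the form $b_i(u) = \sum_{k \geq i} \alpha_{ik} u^{-k}$ and $c_j(u) = \sum_{k \geq j} \gamma_{jk} u^{-k}$; equivalently $b_i(u) = O(u^{-i})$ and $c_j(u) = O(u^{-j})$ as $|u| \to \infty$.

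Next I would apply the rule $\partial_u^{\mu} a = \sum_{k \geq 0} \frac{\mu^{\underline{k}}}{k!}(\partial_u^k a)\,\partial_u^{\mu - k}$ to each product $\partial_u^{t_1 - i} c_j(u)$ inside $D_1 D_2$, and collect by powers of $\partial_u$:
\begin{equation*}
D_1 D_2 = \sum_{l \geq 0} d_l(u) \partial_u^{t_1 + t_2 - l},\qquad d_l(u) = \sum_{\substack{i,j,k \geq 0 \\ i + j + k = l}} \frac{(t_1 - i)^{\underline{k}}}{k!}\, b_i(u)\, \partial_u^k c_j(u).
\end{equation*}
For each fixed $l$ the indexing set is finite, so $d_l(u) \in \mathcal{A}(u)$ is well-defined and no convergence issue arises in this rearrangement.

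The heart of the verification is then the elementary observation that differentiation raises the order of vanishing at infinity by one: if $c(u) = \sum_{k \geq N} \gamma_k u^{-k}$, then $\partial_u^r c(u) = \sum_{k \geq N + r} \tilde{\gamma}_k u^{-k}$. Combined with the fact that a product of two rational functions of orders $O(u^{-a})$ and $O(u^{-b})$ at infinity is of order $O(u^{-a - b})$, every summand in $d_l(u)$ satisfies $b_i(u)\, \partial_u^k c_j(u) = O(u^{-(i + j + k)}) = O(u^{-l})$. Therefore $d_l(u) = O(u^{-l})$ at infinity, which is exactly the condition $L_0[d_l(u^{-1})](x) \in x^l \mathcal{A}[[x]]$, and this is the defining property of $\Psi\mathcal{D}_\infty(\mathcal{A}(u))$.

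No serious obstacle is expected: the argument is entirely mechanical once the multiplication is expanded and the order bounds are assembled. The only point worth being explicit about is finiteness of the set of $(i,j,k)$ with $i + j + k = l$, which guarantees that the coefficient $d_l(u)$ genuinely lies in $\mathcal{A}(u)$ and may be ordered freely; this is immediate from $i,j,k \geq 0$.
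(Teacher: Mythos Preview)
Your proof is correct and follows essentially the same approach as the paper: expand the product, collect by powers of $\partial_u$, and verify that each summand in the $l$-th coefficient vanishes to order at least $l$ at infinity. The only cosmetic difference is that the paper carries out the order estimate explicitly in the variable $x=u^{-1}$ using $\partial_u\mapsto -x^{2}\partial_x$, whereas you phrase the same computation as $O(u^{-N})$ bounds directly in $u$; the content is identical.
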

\begin{proof}
Write
\[ D_{1}=\sum_{i=0}^{\infty}a_{i}(u)\partial_{u}^{t-i},\quad D_{2}=\sum_{j=0}^{\infty}\tilde{a}_{j}(u)\partial_{u}^{s-j}.\]
Then 
\[D_{1}D_{2}=\sum_{l=0}^{\infty}\dbtilde{a}_{l}(u)\partial_{u}^{t+s-l},\]
where 
\begin{equation}\label{l4p1}
\dbtilde{a}_{l}(u)=\sum_{i=0}^{l}\sum_{j=0}^{l-i}\alpha_{ij}^{l}(t)a_{i}(u)(\partial_{u}^{l-i-j}\tilde{a}_{j})(u)
\end{equation}
for some complex numbers $\alpha_{ij}^{l}(t)$.

We have $L_{0}\bigl[a_{i}(u^{-1})\bigr](x)=\sum_{k=i}^{\infty}a_{ik}x^{k}$ and $L_{0}\bigl[\tilde{a}_{j}(u^{-1})\bigr](x)=\sum_{k=j}^{\infty}\tilde{a}_{jk}x^{k}$ for some $a_{ik},\tilde{a}_{jk}\in\mathcal{A}$. Then \eqref{l4p1} implies 
\begin{equation}\label{l4p2}
L_{0}\bigl[\dbtilde{a}_{i}(u^{-1})\bigr](x)=\sum_{i=0}^{l}\sum_{j=0}^{l-i}\alpha_{ij}^{l}(t)\left[\sum_{k_{1}=i}^{\infty}a_{ik_{1}}x^{k_{1}}\right]\left[(-x^{2}\partial)^{l-i-j}\sum_{k_{2}=j}^{\infty}\tilde{a}_{jk_{2}}x^{k_{2}}\right].
\end{equation}

Since 
\[(-x^{2}\partial)^{l-i-j}=\sum_{i'=0}^{l-i-j}d_{i'}x^{2(l-i-j)-i'}\partial^{l-i-j-i'}\]
for some complex numbers $d_{i'}$, we have 
\begin{equation}\label{l4p3}
(-x^{2}\partial)^{l-i-j}\sum_{k_{2}=j}^{\infty}\tilde{a}_{jk_{2}}x^{k_{2}}=\sum_{k_{2}=j}^{\infty}\tilde{\alpha}_{ij}^{l,k_{2}}\tilde{a}_{jk_{2}}x^{l+k_{2}-i-j}
\end{equation}
for some complex numbers $\tilde{\alpha}_{ij}^{l,k_{2}}$.

Notice that for $k_{1}\geq i$ and $k_{2}\geq j$, we have $k_{1}+k_{2}+l-i-j\geq l$, therefore, \eqref{l4p2} and \eqref{l4p3} imply
\[L_{0}\bigl[\dbtilde{a}_{l}(u^{-1})\bigr](x)=\sum_{k=l}^{\infty}\dbtilde{a}_{lk}x^{k}\]
for some $\dbtilde{a}_{lk}\in\mathcal{A}$.
\end{proof}
\begin{lem}\label{l5}
    If $D\in\Psi\mathcal{D}_{\infty}(\mathcal{A}(u))$, then $D^{-1}\in\Psi\mathcal{D}_{\infty}(\mathcal{A}(u))$.
\end{lem}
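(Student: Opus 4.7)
The plan is to construct $D^{-1}$ coefficient by coefficient from the relation $D\cdot D^{-1}=1$, and to establish the regularity at infinity of each coefficient by induction on its index.

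Writing $D=\sum_{i\geq 0}a_i(u)\partial_u^{t-i}$ (with $a_0$ invertible in $\mathcal{A}(u)$, which is needed for $D^{-1}$ to exist at all) and seeking $D^{-1}$ in the form $\sum_{j\geq 0}b_j(u)\partial_u^{-t-j}$, the Leibniz-type rule $\partial_u^\mu a=\sum_{k\geq 0}(\mu^{\underline{k}}/k!)(\partial_u^k a)\partial_u^{\mu-k}$ applied to $D\cdot D^{-1}=1$ gives, for every $l\geq 0$,
\begin{equation*}
\sum_{i+j+k=l}\frac{(t-i)^{\underline{k}}}{k!}\,a_i(u)\,(\partial_u^k b_j)(u)=\delta_{l,0}.
\end{equation*}
The case $l=0$ gives $b_0=a_0^{-1}$, and isolating the term $(i,j,k)=(0,l,0)$ in the case $l\geq 1$ yields the recursion
\begin{equation*}
a_0\,b_l=-\sum_{\substack{i+j+k=l\\ j<l}}\frac{(t-i)^{\underline{k}}}{k!}\,a_i\,(\partial_u^k b_j),
\end{equation*}
which determines each $b_l\in\mathcal{A}(u)$ uniquely in terms of $a_0^{-1},a_1,\dots,a_l$ and $b_0,\dots,b_{l-1}$.

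The technical core of the proof is an estimate on how the expansion at infinity behaves under $\partial_u$. The substitution $u=1/x$ identifies $\partial_u$ with $-x^2\partial_x$, and this operator sends $x^k$ to $-k\,x^{k+1}$; consequently, if $f\in\mathcal{A}(u)$ satisfies $L_0[f(u^{-1})](x)\in x^m\mathcal{A}[[x]]$ for some $m\geq 0$, then $L_0[(\partial_u^r f)(u^{-1})](x)\in x^{m+r}\mathcal{A}[[x]]$ for every $r\geq 0$. The bound is uniform in $m$: for $m\geq 1$ each derivative increases the starting exponent by exactly one, while for $m=0$ the $x^0$-term of $f(1/x)$ is killed by the first application of $-x^2\partial_x$, compensating for the apparent loss.

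Equipped with this estimate and the multiplicativity $L_0[fg(u^{-1})](x)=L_0[f(u^{-1})](x)\cdot L_0[g(u^{-1})](x)$, the claim $L_0[b_l(u^{-1})](x)\in x^l\mathcal{A}[[x]]$ follows by induction on $l$. The base case uses that $b_0=a_0^{-1}$ has regular expansion at infinity, which holds provided the leading value of $a_0$ at $u=\infty$ is a unit in $\mathcal{A}$ (as is automatic in the applications where $a_0=1$). For the inductive step, every summand $a_i(\partial_u^k b_j)$ with $i+j+k=l$ and $j<l$ has $L_0$-expansion in $x^i\cdot x^{j+k}\mathcal{A}[[x]]=x^l\mathcal{A}[[x]]$ by the induction hypothesis applied to $b_j$, the hypothesis $D\in\Psi\mathcal{D}_\infty(\mathcal{A}(u))$ applied to $a_i$, and the derivative estimate above; multiplying by $a_0^{-1}$, whose expansion is again a unit in $\mathcal{A}[[x]]$, preserves this, so $L_0[b_l(u^{-1})](x)\in x^l\mathcal{A}[[x]]$.

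The only genuinely delicate point is the derivative estimate in the boundary case $m=0$, where one must verify that the cancellation of the constant term by $-x^2\partial_x$ is enough to propagate the bound $x^{m+r}$; isolating this as a short lemma makes the rest of the argument purely formal.
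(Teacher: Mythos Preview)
Your proof is correct and follows essentially the same approach as the paper: both set up a recursion for the coefficients of $D^{-1}$ from the identity $D^{\pm 1}D^{\mp 1}=1$ and prove the regularity estimate $L_0[b_l(u^{-1})](x)\in x^l\mathcal{A}[[x]]$ by induction on $l$. The only cosmetic differences are that the paper uses $D^{-1}D=1$ rather than $DD^{-1}=1$, and that the paper handles your derivative estimate by referring back to the computation in the preceding lemma (the product case), whereas you spell out the $-x^2\partial_x$ argument directly; your treatment of the $m=0$ boundary case is in fact over-cautious, since $-x^2\partial_x$ kills constants outright.
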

\begin{proof}
    Write 
\[ D=\sum_{i=0}^{\infty}a_{i}(u)\partial_{u}^{t-i},\quad D^{-1}=\sum_{j=0}^{\infty}\tilde{a}_{j}(u)\partial_{u}^{-t-j}\]
for some $a_{i}(u), \tilde{a}_{j}(u)\in\mathcal{A}(u)$, $a_{0}(u), \tilde{a}_{0}(u)\neq 0$.
Then 
\begin{equation}\label{l5p1}
1=D^{-1}D=\sum_{l=0}^{\infty}\sum_{i=0}^{l}\sum_{j=0}^{l-i}\alpha_{ij}^{l}(t)\tilde{a}_{i}(u)(\partial_{u}^{l-j-i}a_{j}(u))\partial_{u}^{-l}
\end{equation}
for some complex numbers $\alpha_{ij}^{l}(t)$. 
Formula \eqref{l5p1} gives 
\begin{equation}\label{l5p2}
    \sum_{i=0}^{l}\sum_{j=0}^{l-i}\alpha_{ij}^{l}(t)\tilde{a}_{i}(u)(\partial_{u}^{l-j-i}a_{j}(u))=0
\end{equation}
for each $l\in\Z_{\geq 1}$ and $a_{0}(u)\tilde{a}_{0}(u)=1$.

Since $D\in\Psi\mathcal{D}_{\infty}(\mathcal{A}(u))$, for each $i\in\Z_{\geq 0}$, we have
\begin{equation}\label{l5p4}
L_{0}\left[a_{i}(u^{-1})\right]= \sum_{k=i}^{\infty}a_{ik}x^{k}
\end{equation}
with some $a_{ik}\in\mathcal{A}$. 
We will prove that 
\begin{equation}\label{l5p3}
    L_{0}\left[\tilde{a}_{i}(u^{-1})\right]= \sum_{k=i}^{\infty}\tilde{a}_{ik}x^{k}
\end{equation}
for some $\tilde{a}_{ik}\in\mathcal{A}$ by induction on $i$.

Relation \eqref{l5p4} for $i=0$ together with $a_{0}(u)\tilde{a}_{0}(u)=1$ imply \eqref{l5p3} for $i=0$. 

Fix some $l\in\Z$. Suppose that \eqref{l5p3} is true for any $i<l$. 
Then \eqref{l5p4} implies 
\begin{equation}\label{l5p5}
    L_{0}\left[\tilde{a}_{l}(u^{-1})\right]L_{0}\left[a_{0}(u^{-1})\right] = \sum_{i=0}^{l-1}\sum_{j=0}^{l-i}\alpha_{ij}^{l}(t)\left[\sum_{k_{1}=i}^{\infty}\tilde{a}_{ik_{1}}x^{k_{1}}\right]\left[(-x^{2}\partial)^{l-i-j}\sum_{k_{2}=j}^{\infty}a_{jk_{2}}x^{k_{2}}\right].
\end{equation}
Then the same calculations as in the proof of Lemma \ref{l4} show that \eqref{l5p5} implies \eqref{l5p3} for $i=l$. 
\end{proof}
\subsection{Proof of Theorem \ref{main2}}
Let us use notations from Section \ref{7.2}.
Set $\mathcal{A}=\bigl(\mathcal{A}_{n}\otimes\mathcal{A}_{n'}\bigr)/\bigl(J_{n}(\bar{\nu})\otimes J_{n'}(\bar{\eta})\bigr)$. 
Applying the projection $\mathcal{A}_{n}\otimes\mathcal{A}_{n'}\rightarrow \mathcal{A}$ to the coefficients of the differential operators $D_{n}$ and $D_{n'}$, one obtains two differential operators in $\mathcal{D}(\mathcal{A}(u))$, which we will denote by $\bar{D}_{n}$ and $\bar{D}_{n'}$, respectively. 

Recall the map $L_{z}:\, \mathcal{D}\bigl(\mathcal{A}(u)\bigr) \rightarrow  \mathcal{D}\bigl(\mathcal{A}((x))\bigr)$, see \eqref{Lz}. This map extends in an obvious way to pseudo-differential operators.
Fix some $a=1\lc m$, and consider the differential operators $L_{z_{a}}(\bar{D}_{n}),L_{z_{a}}(\bar{D}_{n'})\in\mathcal{D}(\mathcal{A}((x)))$. By construction of the ideal $J_{n}(\bar{\nu})$ and Proposition \ref{prop1}, the differential equation $L_{z_{a}}(\bar{D}_{n})f=0$ has $n$ solutions $f_{1}\lc f_{n}$ of the form 
$f_{i}=x^{n_{i}}+d_{i}x^{n_{i}+1}+\dots$, where $n_{n+1-i}=n+\nu^{(a)}_{i}-i$.
Similarly, the differential equation $L_{z_{a}}(\bar{D}_{n'})f=0$ has $n'$ solutions $g_{1}\lc g_{n'}$ of the form 
$g_{i}=x^{n'_{i}}+d'_{i}x^{n'_{i}+1}+\dots$, where $n'_{i}=-\eta^{(a)}_{i}+i-1$.

Therefore, by Lemma \ref{l1}, we have
\begin{equation*}
\begin{split}
L_{z_{a}}(\bar{D}_{n}\bar{D}^{-1}_{n'}) & =\left(\partial_{x}-\frac{\nu_{1}^{(a)}}{x}+a_{1}(x)\right)\dots\left(\partial_{x}-\frac{\nu_{n}^{(a)}}{x}+a_{n}(x)\right)\times \\
& \times\left(\partial_{x}+\frac{\eta_{1}^{(a)}}{x}+\tilde{a}_{1}(x)\right)^{-1}\dots\left(\partial_{x}+\frac{\eta_{n'}^{(a)}}{x}+\tilde{a}_{n'}(x)\right)^{-1} 
\end{split}
\end{equation*}
for some $a_{1}(x)\lc a_{n}(x),\tilde{a}_{1}(x)\lc \tilde{a}_{n'}(x)\in\mathcal{A}[[x]]$.

Notice that $1\in\Psi\mathcal{D}(\mathcal{A}((x)),(0,0,\dots))$. Thus, applying Lemma \ref{l3} $n'$ times and Lemma \ref{l2} $n$ times, we conclude that 
\begin{equation}\label{(A)}
L_{z_{a}}(\bar{D}_{n}\bar{D}^{-1}_{n'})\in\Psi\mathcal{D}(\mathcal{A}((x)),(\lambda^{(a)}),\quad a=1\lc k.
\end{equation}

By the construction of the ideals $J_{n}(\bar{\nu})$ and $J_{n'}(\bar{\eta})$, the differential operators $\bar{D}_{n}$ and $\bar{D}_{n'}$ have regular singularity at infinity. Therefore, by Lemma \ref{l4} and Lemma \ref{l5}, we have 
\begin{equation}\label{(B)}
\bar{D}_{n}\bar{D}_{n'}^{-1}\in\Psi\mathcal{D}_{\infty}(\mathcal{A}(u)).
\end{equation}

By the definition of the ideal $\underline{J}_{n-n'}$ and its image $J_{n\vert n'}$ in $\mathcal{A}_{n\vert n'}$, conditions \eqref{(A)} and \eqref{(B)} are equivalent to the condition that the image of $J_{n\vert n'}$ under the projection $\mathcal{A}_{n\vert n'}\rightarrow \mathcal{A}_{n\vert n'}/\tilde{J}_{n\vert n'}$ is zero. But this means that $J_{n\vert n'}\subset \tilde{J}_{n\vert n'}$, and the theorem is proved.

\bibliographystyle{plain}
\bibliography{mybibliography1}

\end{document}